\newtheorem{theorem}{Theorem}[section]
\newtheorem{corollary}[theorem]{Corollary}
\newtheorem{lemma}[theorem]{Lemma}
\newtheorem{proposition}[theorem]{Proposition}
\newtheorem{definition}[theorem]{Definition}
\newtheorem{remark}[theorem]{Remark}
\newtheorem*{theorem*}{Theorem}
\newcommand{\R}{\mathbb{R}}
\newcommand{\N}{\mathbb{N}}
\newcommand{\rmnote}[1]{}
\title[On the optimization of the Robin eigenvalues in some classes of polygons]{On the optimization of the Robin eigenvalues in some classes of polygons}
\author{Alessandro Carbotti$^1$}
\address{$^1$Dipartimento di Matematica e Fisica \lq\lq E. De Giorgi\lq\lq, Università del Salento, Via per Arnesano, 73100 Lecce, Italy.}
\email{alessandro.carbotti@unisalento.it}
\author{Simone Cito$^1$}
\address{$^1$Dipartimento di Matematica e Fisica \lq\lq E. De Giorgi\lq\lq, Università del Salento, Via per Arnesano, 73100 Lecce, Italy.}
\email{simone.cito@unisalento.it}
\author{Diego Pallara$^2$}
\address{$^2$Dipartimento di Matematica e Fisica \lq\lq E. De Giorgi\lq\lq, Università del Salento, Via per Arnesano, 73100 Lecce, Italy and INFN, Sezione di Lecce}
\email{diego.pallara@unisalento.it}
\keywords{Robin laplacian, geometric control, higher order eigenvalues, shape optimization, polygons}
\subjclass[2020]{(Primary) 49Q10, (Secondary) 35P15, 49R05}
\begin{document}

\maketitle

\begin{center}
    {\em To the memory of Umberto Massari}
\end{center}
	\begin{abstract}
	Given the eigenvalue problem for the Laplacian with Robin boundary conditions, (with $\beta\in\R\setminus\{0\}$ the Robin parameter), we consider shape minimization problems if $\beta>0$ and shape maximization problems if $\beta<0$. Both problems are settled in a suitable class of generalized polygons with an upper bound on the number of sides, under either perimeter or volume constraint. 
	\end{abstract}
	
	\tableofcontents
	\section{Introduction}
    \label{sec:intro}
    
    The classical Robin eigenvalue problem is formulated as follows: given a nonzero real parameter $\beta$, we look for which values of $\lambda$ the boundary value problem 
    \begin{equation}\label{eq:classicalRobin}
        \left\{\begin{array}{ll}
        \Delta u + \lambda u = 0 \qquad &\text{in\ \ } \Omega
        \\
        \displaystyle \frac{\partial u}{\partial\nu} + \beta u =0 & \text{on\ \ }  \partial\Omega
        \end{array}\right.
    \end{equation}
    admits a nonzero weak solution. Here $\Omega$ is a bounded set in $\R^d$ with sufficiently smooth boundary and $\nu$ is the outer normal. It is well known (see e.g. \cite{BucFreKen}) that this problem admits a weak formulation based on the bilinear form
    \begin{equation}\label{eq:RobinForm}
        \mathcal{E}_\beta (u,v) = \int_\Omega \nabla u\cdot\nabla v\:dx+\beta\int_{\partial\Omega}uv\:d\mathcal{H}^{d-1},\quad u,v\in H^1(\Omega),
    \end{equation}
    where $\mathcal{H}^{d-1}$ is the $(d-1)$-dimensional Hausdorff measure and $u$ on $\partial\Omega$ is the boundary trace. By the trace inequality in the Sobolev space $H^1(\Omega)$ the bilinear form $\mathcal{E}_\beta$ is semibounded in $L^2(\Omega)$, i.e., there are constants $c_1,c_2>0$ such that $\mathcal{E}_\beta(u,u)+c_1\|u\|^2_{L^2}\geq \|u\|_{H^1}^2$, hence $\mathcal{E}_\beta$ defines a closed operator $(-\Delta_\beta)$ in $L^2(\Omega)$ which is self-adjoint and has compact resolvent, so that its spectrum is real and consists of an increasing sequence $\bigl(\lambda_{k,\beta}\bigr)_{k\in\N}$ of eigenvalues such that $\lambda_{k,\beta}\to+\infty$ as $k\to +\infty$, with $\lambda_{1,\beta}>0$ if $\beta>0$. The corresponding variational formulation for the first eigenvalue leads to the minimization of the Rayleigh quotient
\begin{equation}\label{RaileighQuotient}
R_{\Omega,\beta}(u):=
\frac{\displaystyle\int_\Omega|\nabla u|^2\:dx+\beta\int_{\partial\Omega}u^2\:d\mathcal{H}^{d-1}}{\displaystyle\int_\Omega u^2\:dx},
\end{equation}
\begin{equation}\label{eq:Rayleigh1}
\lambda_{1,\beta}(\Omega)=\min_{u\in H^1(\Omega)\setminus\{0\}}
R_{\Omega,\beta}(u).
\end{equation}
As in the more classical Dirichlet and Neumann problems (that can be formally obtained from \eqref{eq:classicalRobin} by letting $\beta$ to $+\infty$ and to $0$, respectively) one can look for the sets $\Omega$ that optimize suitable functions of the Robin spectrum. Such problems are different in nature according to the sign of the boundary parameter $\beta$. The simplest one is obviously that of finding the set $\Omega$ with minimal first eigenvalue with $\beta>0$ among the sets with prescribed Lebesgue measure: the optimal set turns out to be the ball, as proved in dimension two by M.-H. Bossel \cite{Bossel} and in higher dimension by D. Daners \cite{Daners}. In the negative boundary parameter regime $\beta<0$, by using a constant test function in \eqref{eq:Rayleigh1} it is easily checked that the first eigenvalue verifies 
\begin{equation}\label{negativebeta}
\lambda_{1,\beta}(\Omega)\leq \beta\mathcal{H}^{d-1}(\partial\Omega)/|\Omega|<0, 
\end{equation}
where $|\Omega|$ is the Lebesgue measure of $\Omega$. As a consequence, the first eigenvalue is bounded from above but one should expect it to be bounded below under either the perimeter constraint or the measure constraint. Therefore it is very natural to study the {\em maximization} of the first eigenvalue. In this case the optimal set is not yet known in general, though in the planar case with measure constraint it is proved in \cite{FreKre} that the disk is a maximizer for sufficiently small (in absolute value) values of the boundary parameter and that there exists a critical threshold under which the disk is not a maximizer anymore.  On the other hand, in \cite{bfnt19} it is proved that the ball is the only maximizer among convex sets, in any dimension and for any value of $\beta<0$.

Generalizing the Courant-Fischer formula \eqref{eq:Rayleigh1} to higher eigenvalues, we consider 
\begin{equation}\label{eq:Rayleighk}
\lambda_{k,\beta}(\Omega)=\min_{S\in\mathcal{S}_k}\max_{u\in S\setminus\left\{0\right\}} R_{\Omega,\beta}(u)
\end{equation}
where $\mathcal{S}_k$ denotes the set of all $k$-dimensional subspaces of $H^1(\Omega)$. Notice that the presence of boundary integrals makes the study of the Robin problem deeply different from the Dirichlet and Neumann problems. 

In this paper we study two shape optimization problems (according to the sign of $\beta$) for some functions of the Robin eigenvalues, in the planar case $d=2$, where the class of competitors is that of polygons. In order to tackle these optimization problems, we fix a suitable topology on the polygons (the one induced by the $H^c$ convergence, see Definition \ref{def:HcConvergence}), we enlarge the class of simple polygons in Definition \ref{def:poly} to encompass their $H^c$ limits, i.e., the {\em generalized polygons} introduced in Definition \ref{def:genpol}, and we extend the variational characterization of the eigenvalues to such class, see \eqref{eq:genEigenvalues}. 
    
Though the formulation of spectral shape optimization problems in some subclasses of polygons is rather simple, there are several issues in this setting. Let us start with the \textit{polygonal Faber-Krahn} inequality stated by Polya and Sz\"ego, see \cite[Pag. 158]{polyaszego}: among polygons with at most $N$ sides and given area, does the regular $N$-agon minimize the first Dirichlet eigenvalue? Even if the result seems natural and expected, a direct proof of it is available only for $N=3$ and $N=4$. Indeed, in these two cases, the classic symmetrization techniques work and thus it is natural to obtain the equilateral triangle and the square as minimal polygons, respectively. 
On the other hand, if $N\ge 5$, the Steiner symmetrization of a polygon could increase the number of sides, in general. In  \cite[Section 3.3]{henrot06} it is proved that among polygons with \emph{at most} $N$ sides, optimizers exist and have \emph{exactly} $N$ sides. This result is proved by showing that a small cut near a convex corner produces a better competitor with more sides. This idea is exploited in Section \ref{sec:positive} below. Anyway, the question of the precise shape of the optimal polygons remains open. Recently, it has been pointed out in \cite{BogBuc, bogbucvalidated} that for some $N\ge 5$ the proof that the optimal set is the regular $N$-agon can be reduced to a finite number of certified numerical computations and the local minimality of the regular pentagon and hexagon has been shown. Concerning other boundary conditions, the possibility to handle explicit eigenfunctions or to separate the variables plays an important role. For what concerns Neumann conditions, it is worth mentioning  \cite{vdbbucgit}, where the authors address the problem of maximizing the Neumann eigenvalues on rectangles with a measure or perimeter constraint. Instead, concerning Robin eigenvalues with positive boundary parameter $\beta>0$, in \cite{freken} the authors have proved that the square minimizes the first eigenvalue among all (unions of) rectangles of a given area. For the higher eigenvalues, they proved that the square (respectively,  the union of $k$ equal squares) minimizes $\lambda_{1,\beta}$ (respectively, $\lambda_{k,\beta}$) among rectangles (respectively, unions of rectangles) of given area if $\beta$ is below a certain threshold; on the other hand, they showed that the optimizers are not the square or the union of $k$ equal squares if $\beta$ is large enough. It is worth mentioning also \cite{laug19}, where the author showed how the rectangular case supports some well-known conjectures about spectral shape optimization problems involving the Robin eigenvalues. For the case $\beta<0$, we mention \cite{krlotu}, where the authors proved that the equilateral triangle locally maximizes the first eigenvalue among all triangles of a given area, again provided that $|\beta|$ is below a certain threshold depending only upon the area constraint. We also cite \cite{FreLau}, where the maximization of the second Robin eigenvalue in general space dimension for general sets is addressed, and it is shown that in a suitable range of values of the boundary parameter that includes those for which the second eigenvalue remains positive, under volume constraint, the maximizing set is the ball. 

When dealing with the negative boundary parameter case, in some situations it turns out that the perimeter constraint is rather natural, see for instance  \cite{bfnt19,cito2021quantitative}, where the optimality and the stability of the ball for the first eigenvalue in the convex case is addressed, \cite{AntFreKre} where the maximality of the ball is proved among all bounded planar sets of class $C^2$ with fixed perimeter for all the values of the (negative) boundary parameter, or also \cite{Vikulova} for some extensions in dimension $d=3$. Even in our framework, the (generalized) perimeter constraint is very helpful to obtain some additional properties of the optimal shapes, see Proposition \ref{Prop:furtherproperties}. 

As highlighted in the previous round-up of references, for the polygonal case we have very little information about the optimizers even for the first eigenvalue (except for special cases in which either the eigenfunctions are explicit or the symmetrization techniques work, see \cite{McCartin, freken}, or some restrictions on the admissible polygons). The main difficulty in tackling this problem is that it is not possible to transpose the same argument used in \cite{Bossel, Daners} to prove an isoperimetric inequality for the first Robin eigenvalue. Indeed, such results are based on the radiality of the first eigenfunction of the disk and on a comparison between the (smooth) level sets of such function and the level sets of an eigenfunction of a generic domain. On the other hand, the possible presence of parts of the boundary with multiplicity 2 is a challenging problem even in more general settings, see \cite{citogiaco}. Moreover, in the Robin case, the optimality of the regular $N$-agon is ensured only if $N\in\{3,4\}$ within some ranges of the boundary parameter. For this reason, our intent is to get more general existence results for both cases $\beta>0$ and $\beta<0$, without any assumption on the magnitude of the parameter, the number of sides and the order of the eigenvalue. More precisely, we focus on a wider class of spectral functionals, whose prototype is the sum of the first $k$ eigenvalues, we consider both the perimeter and measure constraint and we prove the existence of solutions and some qualitative properties, combining well established techniques holding in more general settings (generalization of the eigenvalues, existence in a weaker framework, continuity of the traces along moving boundaries, etc.) and peculiar features of the polygonal case.

The paper is organized as follows: in Section \ref{sec:Prelimin} we describe our framework, recall the necessary preliminary results and state  our main results. Sections \ref{sec:positive} and \ref{sec:negative} are devoted to the proofs of our main results, namely Theorems \ref{mtheorem:main1} and \ref{Th:main2}, respectively. In Section \ref{sec:Open} we discuss some further issues.

\noindent
\paragraph*{\bf Acknowledgements}  
We are grateful to Prof. Pedro Freitas for his interest in the paper and his suggestions. We also thank the anonymous referee for many valuable suggestions and comments that helped us to improve the first version of the paper.

The authors are members of GNAMPA of the Istituto Nazionale di Alta Matematica (INdAM). 
A.C. acknowledges the support of the INdAM - GNAMPA 2025 Project ``Metodi variazionali per problemi dipendenti da operatori frazionari isotropi e anisotropi'' and of the INdAM - GNAMPA 2026 Project 
	``Analisi variazionale per operatori locali e nonlocali possibilmente singolari o degeneri''.
S.C. acknowledges the support of the INdAM - GNAMPA 2025 Project ``Disuguaglianze funzionali di tipo geometrico e spettrale''.
The authors have been also partially supported by the PRIN 2022 project 20223L2NWK.

\noindent
{\bf Data Availability Statement} Data sharing is not applicable to this article as no datasets were generated or analysed during the current study.

\section{Preliminaries and Main results}\label{sec:Prelimin}

In order to introduce the class of admissible polygons, we start from the definition of simple polygon. In the whole paper by {\em line segment} we always mean a {\em maximal} subset of a straight line in the plane belonging to the boundary of a polygon.

Following \cite{buf}, we introduce the class of admissible polygonal sets. 

\begin{definition}[Simple polygons]\label{def:poly}
A simple polygon is the open bounded planar region $P$ delimited by a finite number of not self-intersecting line segments (called sides) which are pairwise joined at their endpoints (called vertices) to form a simple closed path.
\end{definition}

Let us denote by $\mathcal{P}_{N}$ the family of simple polygons with at most $N$ sides. Notice that simple polygons are connected and simply connected.

In the following, we use as a key tool the $H^c$-convergence, as it preserves many topological properties of polygonal domains. Let us start from the Hausdorff distance in $\R^2$.

\begin{definition}[Hausdorff distance]\label{def:H-distance}
   Let $A,B \subset \R^2$ be closed. We define the Hausdorff distance between $A$ and $B$ by
\[
d_H (A, B) := \max \left\{\sup_{x\in A} {\rm dist}(x, B), \sup_{x\in B} {\rm dist}(x, A) \right\}.
\] 
\end{definition}

The Hausdorff convergence of a sequence of open sets is defined using the distance between their complements. 

\begin{definition}[Hausdorff convergence of open sets]\label{def:HcConvergence}
  Let $D \subset \R^2$ be compact and let $E, F \subset \R^2$ be open. We define the Hausdorff complementary distance in $D$ between $E$ and $F$ by
\[
d_{H^c} (E, F) = d_H (D \setminus E, D \setminus F).  
\]
If $E_n\subset D$ for every $n\in\N$, we say that $E_n$ $H^c$-converges to $E\subset D$ if $\lim_n d_{H^c}(E_n,E)=0$ in $D$. For a general sequence $E_n\subset\R^2$, we say that $E_n$ locally $H^c$-converges to $E$ if for any ball $B$ the sequence $E_n\cap B$ $H^c$-converges to $E\cap B$.
\end{definition}
It is easily seen that the definition of $H^c$-convergence is independent of the choice of the compact set $D$. 

Notice that in general the $H^c$-limit of a sequence of simple polygons in $\mathcal{P}_{N}$ is not a simple polygon in $\mathcal{P}_{N}$, as shown in Figure \ref{fig:polypacman}.

\begin{figure}[h!]
	\centering
		\includegraphics[width=0.9\textwidth]{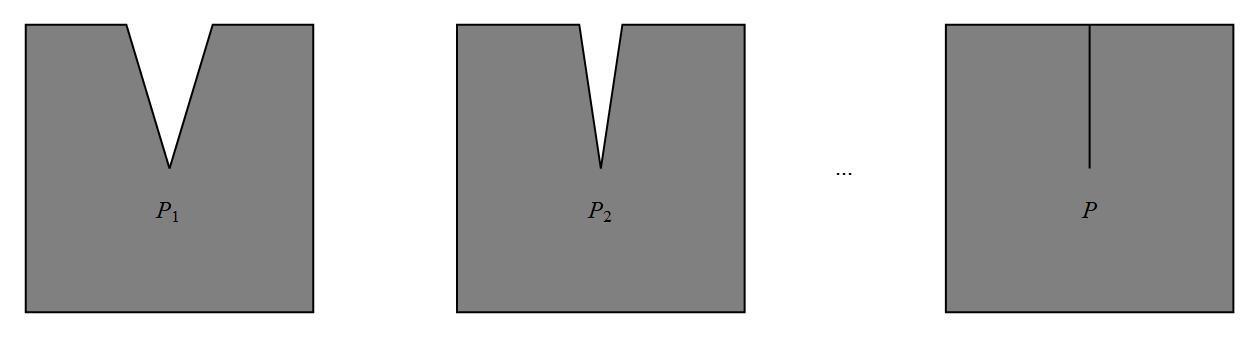}
	\caption{The sequence $(P_n)\subset\mathcal{P}_7$ $H^c$-converges to the ``degenerate polygon'' $P$, which has a boundary given by line segments, but is not a simple polygon.}
	\label{fig:polypacman}
\end{figure}

To overcome this problem, we follow the approach in \cite{buf} and set our shape optimization problems in a class of sets that contains the $H^c$-limits of simple polygons.

\begin{definition}[Generalized polygons]\label{def:genpol}
We say that an open set $P\subset\R^2$ is a generalized polygon with at most $N$ sides if there exists a sequence $(P_n)$ of simple polygons in $\mathcal{P}_N$ such that $(P_n)$ locally $H^c$-converges to $P$ and $\displaystyle\limsup_{n\to\infty}|P_n|<\infty$. We denote by $\overline{\mathcal{P}_{N}}$ the class of generalized polygons with at most $N$ sides.
\end{definition}

\begin{remark}[counting the sides of a generalized polygon]
The number of sides of a generalized polygon $P$ is the least number of line segments such that $\partial P$ is a (connected) closed path. We say that a side  has multiplicity 2 if it appears twice in such curve. For instance, the polygon $P$ in Figure \ref{fig:polypacman} has boundary given by the union of 5 segments, but according to our convention it has 7 sides.
\end{remark}

Notice that if $P$ is a generalized polygon, any side having double multiplicity has at least one vertex on the topological boundary of $\overline{P}$.

\medskip

The following compactness result is contained in \cite[Proposition 4.6.1]{bucur2004variational}

\begin{proposition}\label{Prop:compactness}
Let $D\subset\mathbb{R}^d$ a fixed compact set. Then, the class of the open sets contained in $D$ is compact in the Hausdorff-complementary topology.
\end{proposition}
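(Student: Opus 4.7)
The plan is to reduce the statement to the classical Blaschke selection theorem, which asserts that the family of non-empty closed subsets of a compact metric space is itself compact with respect to the Hausdorff distance $d_H$. Given any sequence $(E_n)$ of open sets contained in $D$, I would pass to the sequence of closed complements $K_n := D \setminus E_n$; these are closed subsets of the compact set $D$, hence form a sequence in a compact space under $d_H$.

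By Blaschke's theorem applied inside $D$, one extracts a subsequence $K_{n_k}$ that $d_H$-converges to some closed set $K \subset D$. The natural candidate for the $H^c$-limit of $(E_{n_k})$ is then $E := D \setminus K$, which is open because $K$ is closed. By the very definition of the Hausdorff-complementary distance recalled in Definition \ref{def:HcConvergence},
\[
d_{H^c}(E_{n_k}, E) \;=\; d_H(D \setminus E_{n_k},\, D \setminus E) \;=\; d_H(K_{n_k}, K) \;\longrightarrow\; 0,
\]
and evidently $E \subset D$. This concludes the extraction of an $H^c$-convergent subsequence and thus the compactness.

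The only minor technicality concerns the possibility that $K_n = \emptyset$ for some indices (equivalently $E_n = D$): this can be handled either by extracting a constant subsequence when $K_n$ is eventually empty, or by adopting the convention $d_H(\emptyset, A) := \operatorname{diam}(D)$, under which Blaschke's theorem continues to apply. In either case the entire content of the proposition is just a reformulation of Blaschke's compactness result via the complementation map $E \mapsto D \setminus E$, so no real obstacle arises beyond this classical fact; in particular, one expects the argument to be quite short in the authors' presentation.
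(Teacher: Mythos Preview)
Your argument is correct and is indeed the standard route to this result: pass to complements and invoke Blaschke's selection theorem. However, the paper does not actually provide a proof of this proposition at all; it simply cites \cite[Proposition 4.6.1]{bucur2004variational} and moves on. So there is nothing to compare against beyond noting that what you have written is essentially the proof one finds in the cited reference.
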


\begin{remark}[see Remark 2.2.18 in \cite{hpi}]\label{rem:connectionopen}
If $\Omega_n\stackrel{H^c}{\longrightarrow}\Omega$ into a fixed compact $B$, then, denoting by $\# E$ the number of connected components of $E$, $\#(\Omega^c\cap B)\le\liminf_n \#(\Omega^c_n\cap B)$.

In dimension $d=2$ this allows us to obtain further topological information: if a bounded open set $\Omega\subset\R^2$ is a disjoint union of simply connected open sets, then, for any compact set $B\subset\R^2$, $B\setminus\Omega$ is a compact connected set. This implies that the $H^c$-limit of unions of simply connected set is union of simply connected set. Indeed, let $(\Omega_n)$ be a sequence of open bounded subsets of $\R^2$ such that each $\Omega_n$ is a bounded disjoint union of simply connected open sets; if $\Omega_n\stackrel{H^c}{\longrightarrow}\Omega$, then
$$
1\le\#(\Omega^c\cap B)\le\liminf_n \#(\Omega^c_n\cap B)=1
$$
and so $\Omega$ is union of simply connected open sets.
\end{remark}

\begin{remark}\label{pro:compoly}
The following facts hold true for the family $\overline{\mathcal{P}_{N}}$ (see \cite{buf} for details).
\begin{itemize}
\item[(i)] $\overline{\mathcal{P}_N}$ is closed with respect to the local $H^c$-convergence since the number of connected components of the complement of each generalized polygon is uniformly bounded.
\item[(ii)] Every $P\in\overline{\mathcal{P}_N}$ is union of simply connected generalized polygons, since $P^c$ is connected.
\item[(iii)] $P\in\overline{\mathcal{P}_N}$ may be disconnected; each connected component of $P$ is delimited by a finite number of line segments (still called the sides of $P$), which are pairwise joined at their endpoints (still called vertices of $P$) to form a closed path, possibly containing self-intersections; in particular, $P$ has at most $N$ sides, counted with their multiplicity. $P$ has at most $\lfloor\frac{N-1}{2}\rfloor$ connected components. If $N$ is odd, this upper bound is obtained for instance if $P$ is the union of triangles with consecutive bases lying on the same line (so their union is considered as one side, according to our definition of line segment). If $N$ is even, the upper bound is obtained by replacing one of the triangles in the previous construction with a quadrilateral.

\begin{figure}[h!]
	\centering
		\includegraphics[width=0.9\textwidth]{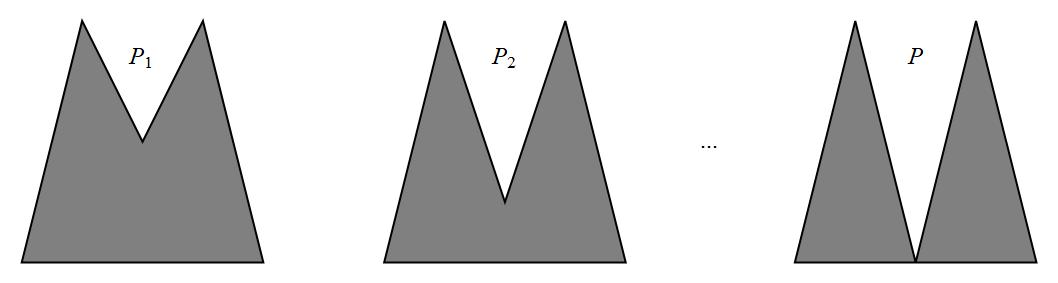}
	\caption{The sequence $(P_n)\subset\mathcal{P}_5$ $H^c$-converges to $P\in\overline{\mathcal{P}_5}$, that has $2=\lfloor\frac{5-1}{2}\rfloor$ connected components.}
	\label{fig:mountains}
\end{figure}

\item[(iv)] Since the topological boundary of the closure of any $P\in\overline{\mathcal{P}_N}$ is a closed curve, $P$ is bounded and thus has finite Lebesgue measure. 
Conversely, since any $P\in\overline{\mathcal{P}_N}$ has finite Lebesgue measure, it is bounded (otherwise, in view of the bound on the number of sides, necessarily $P$ would have two parallel sides with infinite length, contradicting the fact that $|P|<+\infty$.)
\end{itemize}
\end{remark}

\begin{remark}
Let us observe that the number of sides is lower semicontinuous for locally $H^c$-converging sequences $(P_n)\subset\overline{\mathcal{P}_{N}}$. 

Indeed, let $(P_n)$ be a sequence of generalized polygons $H^c$ converging to $P\in\overline{\mathcal{P}_{N}}$ and let $M\leq N$ be the biggest integer such that there are infinitely many $P_n$ in $\overline{\mathcal{P}_{M}}$. Then, by Remark \ref{pro:compoly}(i) the limit $P$ belongs to $\overline{\mathcal{P}_{M}}\subseteq\overline{\mathcal{P}_{N}}$. 
Notice that this fact does not hold if the number of sides is not bounded a priori (the sequence $(R_n)$ of regular $n$-agons of measure $m$ centered at a point $x_0\in\R^2$ $H^c$-converges to the disk of measure $m$ centered at $x_0$).
\end{remark}

The definition of the trace of $u\in H^1(P)$, where $P$ is a simple polygon, is defined in the usual way. Following \cite[Section 1.1.7]{Grisvard}, we can define the trace of $u$ also when $P$ is a generalized polygon, that may lie on both sides of an inner boundary segment. To this aim, we can assume that there is only one inner boundary segment $S$ (otherwise, divide $P$ in smaller polygons with such a property). Then adding a further segment $S'$ starting from the inner endpoint of $S$, we divide $P$ as the union of two simple polygons and in each of them the trace of $u$ is well defined. Of course, the traces of $u$ on both sides of $S'$ coincide, whereas on $S$ they can be different. Henceforth, we denote by $u^+$ and $u^-$ these traces. In the sequel, it is not important to make explicit a criterion to distinguish the two sides, which we call {\em right} and {\em left} only to simplify the presentation. Moreover, to deal with simple and generalized polygons at the same time, we agree that all functions defined in $P$ are extended as 0 out of $\overline{P}$ and denote by $u^+$ the interior trace on $\partial \overline{P}$.

In order to take into account inner boundary segments, we relax the definition of the Rayleigh quotients \eqref{eq:Rayleighk} and we define the {\em generalized Rayleigh quotient} by
\begin{equation}\label{GenRaiQuotient}
\overline{R}_{P,\beta}(u):=
    \frac{\displaystyle\int_P|\nabla u|^2\:dx+\beta\int_{\partial P}\left[(u^+)^2+(u^-)^2\right]\:d\mathcal{H}^{1}}{\displaystyle\int_P u^2\:dx}
\end{equation}
and the {\em generalized eigenvalues} on a generalized polygon $P$ by
\begin{equation}\label{eq:genEigenvalues}
\overline{\lambda}_{k,\beta}(P):=\inf_{S\in\mathcal{S}_k}\max_{u\in S\setminus\left\{0\right\}}\overline{R}_{P,\beta}(u),
\end{equation}
where as above $\mathcal{S}_k$ denotes the set of all $k$-dimensional subspaces of $H^1(P)$. 
This definition is well posed, since it does not depend on the orientation of $\partial P$. Moreover, if $P$ is a simple polygon, then $\overline{\lambda}_{k,\beta}(P)=\lambda_{k,\beta}(P)$, as on the boundary $u^+=u$ and $u^-=0$.

\bigskip

We recall two useful properties of the classical eigenvalues $\lambda_{k,\beta}$ (see \cite{BucFreKen}) that are generalized to $\overline{\lambda}_{k,\beta}$ in a standard way.

\begin{remark}[\bf Some properties of generalized eigenvalues] Let $P\in\overline{\mathcal{P}_N}$, $|P|<+\infty$. The following properties hold:
\label{rem:scaling}
\begin{itemize}
\item $\beta\mapsto\overline{\lambda}_{k,\beta}(P)$ is strictly increasing
\item For $t>0$ we have
$$
\overline{\lambda}_{k,\beta}(t P)=\frac{1}{t^2}\overline{\lambda}_{k,t\beta}(P).
$$
In particular, when $\beta>0$, we have 
\begin{equation}\label{eq:scal}
\overline\lambda_{k,\beta}(t P)<\frac{1}{t}\overline{\lambda}_{k,\beta}(P)<\overline{\lambda}_{k,\beta}(P)
\end{equation}
for every $t>1$.
\item If $P=P_1\cup P_2$, with $P_1,P_2\in\overline{\mathcal{P}_N}$ nonempty and disjoint, it holds
\begin{equation}\label{eq:compconn}
    \overline{\lambda}_{h,\beta}(P) =\min_{
i=0,...,h}
\max\left\{\overline{\lambda}_{i,\beta}(P_{1}),\overline{\lambda}_{h-i,\beta}(P_{2})\right\}
\end{equation}
(where we set $\overline{\lambda}_{0,\beta}(P_i):=0$).
\end{itemize}
\end{remark}

In particular, inequality \eqref{eq:scal} entails the decreasing monotonicity under dilation of the generalized eigenvalues with positive boundary parameter.

\begin{remark}
Let $P\in\mathcal{P}_N$ a simple polygon. A useful result (holding in general for every bounded, Lipschitz, connected, domain) is the following: the map $\beta\mapsto\lambda_{1,\beta}(P)$ is differentiable in $\R$ and it holds
    \begin{equation}\label{eq:4.16}
 \lim_{\beta\to 0}\left(\frac{d}{d\beta}  \lambda_{1,\beta}(P) \right)=\frac{\mathcal{H}^1(\partial P)}{|P|},
\end{equation}
see \cite[Formula (4.16)]{BucFreKen}.
More generally, if $P$ is a simple polygon whose $k$-th Neumann eigenvalue $\mu_k(P)$ is simple, then $\beta\mapsto\lambda_{k,\beta}(P)$ is differentiable in $\beta=0$ and it holds
    \begin{equation}\label{eq:4.12}
 \lim_{\beta\to 0}\left(\frac{d}{d\beta}  \lambda_{k,\beta}(P) \right)=\frac{\int_{\partial P}\psi_k^2\:d\sigma}{\int_P\psi_k^2\:dx},
\end{equation}
where $\psi_k$ is the corresponding Neumann eigenfunction, see \cite[Formula (4.12)]{BucFreKen}. An example of simple polygon having some simple higher Neumann eigenvalues is every non equilateral triangle $T$, whose second eigenvalue is always simple, see \cite[Theorem 1]{Siudeja}. A consequence of \eqref{eq:4.12} is that, expanding $\lambda_{2,\beta}(T)$ near $\beta=0$ and recalling $\lambda_{2,0}(T)=\mu_2(T)$, we get
\begin{equation}\label{eq:expansion}
\lambda_{2,\beta}(T)\simeq \mu_2(T)+\beta\frac{\int_{\partial T}\psi_2^2\:d\sigma}{\int_T\psi_2^2\:dx}
\end{equation}
as $\beta\to 0$.
\end{remark}

\bigskip

As we are dealing with convergent sequences of (generalized) polygons, the corresponding Rayleigh quotients are settled on different function spaces whose convergence in turn must be defined. The right notion of convergence is the {\em Mosco convergence} (see e.g. \cite[Chapter 4]{bucur2004variational}), as it combines a geometric notion of convergence of the domains with a functional-analytic convergence of the Sobolev spaces. 

\begin{definition}[Convergence in the sense of Mosco] \label{def:convMosco} 
Let $X$ be a Banach space and let $(G_n)$ be a sequence of closed subsets of $X$. We define weak upper and strong lower limits in the sense of Kuratowski the spaces
\[
w{\rm -}\limsup_{n\to+\infty} G_n =\left\{u\in X:\exists (n_k)_k,\ \exists u_{n_k} \in G_{n_k} \ {\rm s.t.\ }u_{n_k} \to u \ {\rm weakly\ in\ }X\right\},
\]
\[
s{\rm -}\liminf_{n\to+\infty}G_n =\left\{u\in X:\ \exists u_n \in G_n \ \text{s.t.\ }u_n\to u\ {\rm strongly\ in\ }X\right\}. 
\]
We say that $(G_n)$ Mosco-converges to $G$ if
\[
G= w{\rm -}\limsup_{n\to+\infty}G_n = s{\rm -}\liminf_{n\to+\infty} G_n.
\]
\end{definition}

Under suitable topological constraints, Mosco convergence of function spaces is equivalent to convergence in measure and Hausdorff convergence of the domains. An important result is the following theorem, see \cite[Theorem 7.2.1]{bucur2004variational}. 

\begin{theorem}\label{prop:Mosco-domains}
Let us denote by $\# E$ the number of connected components of the open set $E\subset\R^2$.
Let $\ell\in\N$ and let $(\Omega_n)$ be a sequence of open domains in $\R^2$  such that $(\Omega_n)$ is $H^c$-convergent to some $\Omega$, with $\#(\R^2\setminus\Omega_n) \leq \ell$ for every $n\in\N$. Then $H^1(\Omega_n)$ converges to $H^1(\Omega)$ in the sense of Mosco  if and only if $|\Omega_n|$ converges to $|\Omega|$. 
\end{theorem}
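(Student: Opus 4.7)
The statement is an equivalence, and I would approach it by handling each direction separately. Throughout, I fix a compact $D\subset\R^2$ containing $\Omega$ and all $\Omega_n$, and identify each $H^1(\Omega_n)$ with a closed subspace of $L^2(D)$ via extension by zero. A preliminary observation: by Remark~\ref{rem:connectionopen}, the bound $\#(D\setminus\Omega_n)\leq\ell$ is inherited by the $H^c$-limit, so $\#(D\setminus\Omega)\leq\ell$ as well.

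\smallskip\noindent\textbf{Mosco $\Rightarrow$ volume convergence.} My plan is to test the weak upper Mosco inclusion with the characteristic function of $\Omega_n$ itself. The constant function $\mathbf{1}_{\Omega_n}$ lies in $H^1(\Omega_n)$ with zero gradient and $L^2(D)$-norm $\sqrt{|\Omega_n|}\leq\sqrt{|D|}$. Extracting a weakly convergent subsequence $\mathbf{1}_{\Omega_n}\rightharpoonup w$ in $L^2(D)$, the weak upper Mosco inclusion will force $w\in H^1(\Omega)$, hence $w=0$ a.e.\ on $D\setminus\Omega$. From $H^c$-convergence every $x\in\Omega$ lies eventually in $\Omega_n$, so $\mathbf{1}_{\Omega_n}\to 1$ pointwise on $\Omega$, identifying $w=\mathbf{1}_\Omega$ a.e. Testing the weak convergence against the constant $\mathbf{1}_D\in L^2(D)$ then yields $|\Omega_n|\to|\Omega|$ along the subsequence, and a Urysohn-type subsequence argument extends this to the full sequence.

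\smallskip\noindent\textbf{Volume convergence $\Rightarrow$ Mosco.} Here both Kuratowski inclusions must be checked. For the weak upper limit, I take $u_n\in H^1(\Omega_n)$ bounded in $H^1$ with $u_n\rightharpoonup u$ in $L^2(D)$; after a further extraction, $\nabla u_n\rightharpoonup g$ in $L^2(D;\R^2)$. Testing with $\varphi\in C^\infty_c(\Omega)$, whose support lies eventually inside $\Omega_n$ by $H^c$-convergence, gives $g=\nabla u$ distributionally on $\Omega$. Since each $u_n$ vanishes outside $\Omega_n$ and $|\Omega\triangle\Omega_n|\to 0$ (combining volume convergence with the $H^c$-convergence, which already delivers $|\Omega\setminus\Omega_n|\to 0$), the support of $u$ lies in $\overline\Omega$ with no mass on $\partial\Omega$, so $u\in H^1(\Omega)$. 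For the strong lower limit, given $u\in H^1(\Omega)$, I plan to approximate it in $H^1(\Omega)$ by $u_\eps\in C^\infty(\overline\Omega)$, extend $u_\eps$ to $\R^2$ via a uniform extension operator adapted to the Lipschitz-type structure of $\partial\Omega$ imposed by the bound on complementary components, restrict to $\Omega_n$, and conclude via $|\Omega\triangle\Omega_n|\to 0$ and a diagonal extraction in $(\eps,n)$.

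\smallskip\noindent\textbf{Main obstacle.} The hard part is the strong lower Kuratowski inclusion: producing an approximating sequence $u_n\in H^1(\Omega_n)$ converging strongly in $H^1(D)$ to a prescribed $u\in H^1(\Omega)$. Without the uniform bound $\#(D\setminus\Omega_n)\leq\ell$ the complements $\Omega_n^c$ could develop proliferating thin slits or fractal-like boundaries along which no uniform extension of Sobolev functions is possible, and Mosco convergence would fail. This topological rigidity is precisely the ingredient that converts Hausdorff-complementary plus volume convergence into Sobolev-space convergence.
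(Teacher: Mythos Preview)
The paper does not prove this theorem: it is quoted verbatim from \cite[Theorem 7.2.1]{bucur2004variational} with no argument supplied. So there is no ``paper's own proof'' to compare against; the relevant benchmark is the actual proof in Bucur--Buttazzo.

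Your treatment of the implication Mosco $\Rightarrow$ $|\Omega_n|\to|\Omega|$ is essentially fine: testing the weak upper inclusion with $\mathbf 1_{\Omega_n}$ and pairing against $\mathbf 1_D$ is the standard trick, and the identification $w=\mathbf 1_\Omega$ goes through once you note that $H^c$-convergence forces $|\Omega\setminus\Omega_n|\to 0$.

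The genuine gap is in the converse, precisely where you flag the ``main obstacle''. Your plan for the strong lower Kuratowski inclusion relies on a ``uniform extension operator adapted to the Lipschitz-type structure of $\partial\Omega$ imposed by the bound on complementary components''. No such structure exists: a planar open set whose complement has a single connected component can have a boundary of positive area, or a fractal boundary on which no $H^1$-extension operator is available at all, let alone a uniform one along the sequence. The hypothesis $\#(\R^2\setminus\Omega_n)\le\ell$ gives topological control (connectedness of the complement pieces), not metric regularity of $\partial\Omega_n$. The actual proof in \cite{bucur2004variational} does not proceed by extension; it uses two-dimensional tools specific to the Hausdorff convergence of continua (Bucur--Varchon / Chambolle--Doveri type stability for the Neumann problem), ultimately resting on the fact that in the plane a connected compact set of small diameter has small $H^1$-capacity relative to nearby sets. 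This is what replaces the extension argument and is the real content of the theorem. Your sketch correctly identifies the difficult step but proposes a mechanism that is not available under the stated hypotheses.
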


\medskip

In order to handle the possible fractures of a generalized polygon, we define the generalized perimeter of a polygon, recalling the approach in \cite{bugitr22,citogiaco}. Roughly speaking, we quantify the boundary length with the natural multiplicity.

\begin{definition}[Generalized perimeter]
Let $N\in\N$, $P\in\overline{\mathcal{P}_N}$. We set
$$
\partial^\star P:=\partial\overline{P}, \qquad
\Gamma:=\partial P\setminus\partial^\star P
$$
and we call generalized perimeter of $P$ the quantity
$$
\widetilde{Per}(P):=\mathcal{H}^1(\partial^*P)+2\mathcal{H}^1(\Gamma).
$$
\end{definition}

\begin{remark}\label{rem:nonconservper}
    $\widetilde{Per}(\cdot)$, in general, is only lower semicontinuous, see Figure \ref{fig:WW}.
\end{remark}
    \begin{figure}[h!]\label{fig:WW}
\begin{tikzpicture}[>=>>>]
\fill[red!80!green!100!] (-2,0) .. controls (-2,0) and (2,0) ..(2,0) .. controls (2,0) and (2,4) ..(2,4) .. controls (2,4) and (0.6,4) .. (0.6,4) .. controls (0.6,4) and (0.3,2) .. (0.3,2) .. controls (0.3,2) and (0,4) .. (0,4) .. controls (0,4) and (-0.3,2) .. (-0.3,2) .. controls  (-0.3,2)  and (-0.6,4) .. (-0.6,4) .. controls (-0.6,4) and (-2,4) .. (-2,4).. controls (-2,4) and (-2,0) ..  (-2,0);
\draw[line width=.7pt](-2,0) .. controls (-2,0) and (2,0) ..(2,0) .. controls (2,0) and (2,4) ..(2,4) .. controls (2,4) and (0.6,4) .. (0.6,4) .. controls (0.6,4) and (0.3,2) .. (0.3,2) .. controls (0.3,2) and (0,4) .. (0,4) .. controls (0,4) and (-0.3,2) .. (-0.3,2) .. controls  (-0.3,2)  and (-0.6,4) .. (-0.6,4) .. controls (-0.6,4) and (-2,4) .. (-2,4).. controls (-2,4) and (-2,0) ..  (-2,0);
\draw (0,4.5) node {$P_1$};
\fill[red!80!green!100!] (-2+6,0) .. controls (-2+6,0) and (2+6,0) ..(2+6,0) .. controls (2+6,0) and (2+6,4) ..(2+6,4) .. controls (2+6,4) and (0.2+6,4) .. (0.2+6,4) .. controls (0.2+6,4) and (0.1+6,2) .. (0.1+6,2) .. controls (0.1+6,2) and (0+6,4) .. (0+6,4) .. controls (0+6,4) and (-0.1+6,2) .. (-0.1+6,2) .. controls  (-0.1+6,2)  and (-0.2+6,4) .. (-0.2+6,4) .. controls (-0.2+6,4) and (-2+6,4) .. (-2+6,4).. controls (-2+6,4) and (-2+6,0) ..  (-2+6,0);
\draw[line width=.7pt](-2+6,0) .. controls (-2+6,0) and (2+6,0) ..(2+6,0) .. controls (2+6,0) and (2+6,4) ..(2+6,4) .. controls (2+6,4) and (0.2+6,4) .. (0.2+6,4) .. controls (0.2+6,4) and (0.1+6,2) .. (0.1+6,2) .. controls (0.1+6,2) and (0+6,4) .. (0+6,4) .. controls (0+6,4) and (-0.1+6,2) .. (-0.1+6,2) .. controls  (-0.1+6,2)  and (-0.2+6,4) .. (-0.2+6,4) .. controls (-0.2+6,4) and (-2+6,4) .. (-2+6,4).. controls (-2+6,4) and (-2+6,0) ..  (-2+6,0);
\draw (0+6,4.5) node {$P_2$};
\draw (9.5,2) node {\ldots};
\fill[red!80!green!100!] (11,0) .. controls (11,0) and (11,4) ..(11,4) .. controls (11,4) and (15,4) .. (15,4) .. controls (15,4) and (15,0) .. (15,0) .. controls (15,0) and (11,0) .. (11,0);
\draw[line width=.7pt](11,0) .. controls (11,0) and (11,4) ..(11,4) .. controls (11,4) and (13,4) .. (13,4) .. controls (13,4) and (13,2) .. (13,2) .. controls (13,2) and (13,4) .. (13,4) .. controls (13,4) and (15,4) .. (15,4).. controls (15,4) and (15,0) .. (15,0) .. controls (15,0) and (11,0) .. (11,0);
\draw (13,4.5) node {$P$};
\end{tikzpicture}
\caption{$\widetilde{Per}(P_n)\to 12 >10=\widetilde{Per}(P).$}
\end{figure}

Let us recall a condition that entails the compactness of the minimizing sequences. It is a reformulation of \cite[Lemma 4]{BucGia10} adapted to our framework.

\begin{lemma}\label{lem:conc}
Let $M>0$, $P\in\overline{\mathcal{P}_N}$ with $|P|<+\infty$ and $u\in H^1(P)$ such that $\|u\|_{L^2(P)}=L>0$ and assume that 
$$
\int_P|\nabla u|^2\:dx+\int_{\partial P}\left[(u^+)^2+(u^-)^2\right]\:d\mathcal{H}^1\le M.
$$
Then, there exist $y\in\R^2$ and a positive constant $C=C(|P|,M,L)$ such that
$$
|{\rm supp}(u)\cap Q_1(y)|\ge C(|P|,M,L),
$$
where $Q_1(y)$ is the square with center $y$ and sidelength 1. 
\end{lemma}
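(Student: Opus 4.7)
The plan is to argue by contradiction: assume that $|{\rm supp}(u)\cap Q_1(y)|$ is uniformly small in $y\in\R^2$ and show this is incompatible with $\|u\|_{L^2(P)}=L$ given the energy bound. The strategy is to combine a global Sobolev bound for $u^2$, viewed as a $BV$ function on $\R^2$ so as to incorporate the jumps across $\partial P$, with a local H\"older inequality on each square of a unit tiling of the plane.

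First I extend $u$ by zero outside $\overline{P}$, so that $u^2\in BV(\R^2)$ with compact support, and bound its total variation by
$$
|D(u^2)|(\R^2)\le 2\int_P|u||\nabla u|\,dx+\int_{\partial P}\bigl[(u^+)^2+(u^-)^2\bigr]\,d\mathcal{H}^1\le 2LM^{1/2}+M,
$$
using Cauchy--Schwarz on the first term together with the hypothesis. The two-dimensional Sobolev embedding $BV(\R^2)\hookrightarrow L^2(\R^2)$, valid for compactly supported $BV$ functions, then yields
$$
\|u\|_{L^4(\R^2)}^2=\|u^2\|_{L^2(\R^2)}\le C\,|D(u^2)|(\R^2)\le C(L,M).
$$

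Next, I would tile $\R^2$ by a family $\{Q_1(y_i)\}_{i\in\N}$ of unit squares with pairwise disjoint interiors. Cauchy--Schwarz on each square gives
$$
\int_{Q_1(y_i)}u^2\,dx\le \bigl|{\rm supp}(u)\cap Q_1(y_i)\bigr|^{1/2}\left(\int_{Q_1(y_i)}u^4\,dx\right)^{1/2},
$$
and, summing and then applying Cauchy--Schwarz once more to handle the sum of square roots over the at most $\mathcal{N}\lesssim|P|+1$ indices $i$ for which $Q_1(y_i)$ intersects ${\rm supp}(u)$, one obtains
$$
L^2=\sum_i\int_{Q_1(y_i)}u^2\,dx\le \sup_{i}\bigl|{\rm supp}(u)\cap Q_1(y_i)\bigr|^{1/2}\cdot\mathcal{N}^{1/2}\cdot\|u\|_{L^4(\R^2)}^2.
$$
Solving for the supremum produces the positive lower bound depending only on $|P|,M,L$; since ${\rm supp}(u)\subset\overline P$ is bounded, the supremum is attained at some $y\in\R^2$.

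The main obstacle is the first step, namely the correct bookkeeping of the jumps across inner boundary segments of $P$, i.e.\ the sides of double multiplicity of the generalized polygon. The zero-extension of $u$ is not in $H^1(\R^2)$, so one cannot invoke the usual Sobolev embedding directly; working in $BV$ and using the chain rule for $u^2$ is essential so that the jump part of the total variation matches exactly the quantity $\int_{\partial P}[(u^+)^2+(u^-)^2]\,d\mathcal{H}^1$ bounded by $M$ in the hypothesis. Once this is in place, the rest is a standard covering and H\"older argument.
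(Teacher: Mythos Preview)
The paper does not give its own proof of this lemma; it merely cites it as a reformulation of \cite[Lemma~4]{BucGia10}. So your proposal must be judged on its own merits.

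Your $BV$ setup is correct and is the right way to handle the inner boundary segments: extending $u$ by zero, the function $u^2$ lies in $BV(\R^2)$ with
\[
|D(u^2)|(\R^2)\le 2\int_P|u||\nabla u|\,dx+\int_{\partial P}\bigl[(u^+)^2+(u^-)^2\bigr]\,d\mathcal{H}^1\le 2LM^{1/2}+M,
\]
and the two-dimensional embedding $\|u^2\|_{L^2(\R^2)}\le C\,|D(u^2)|(\R^2)$ then yields the global $L^4$ bound $\|u\|_{L^4}\le C(L,M)$.

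The gap is in the tiling step. Your estimate
\[
L^2\le \sup_i|{\rm supp}(u)\cap Q_1(y_i)|^{1/2}\cdot\mathcal{N}^{1/2}\cdot\|u\|_{L^4}^2
\]
is correct, but the claim $\mathcal{N}\lesssim |P|+1$ is false: a generalized polygon of area $m$ can intersect arbitrarily many unit squares (a rectangle of width $\varepsilon$ and length $m/\varepsilon$ meets roughly $m/\varepsilon$ of them). Bounding $\mathcal{N}$ in terms of $|P|$ alone is exactly the kind of diameter control the lemma is meant to \emph{produce}, so invoking it here is circular. With only the global $L^4$ bound, the sum $\sum_i\|u\|_{L^4(Q_i)}^2$ cannot be controlled without a bound on the number of nonzero terms.

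The fix is to replace the global Sobolev inequality by a \emph{local} Poincar\'e--Sobolev inequality on each unit square, applied to $u^2\in BV$:
\[
\|u\|_{L^4(Q_i)}^2=\|u^2\|_{L^2(Q_i)}\le C\Bigl(|D(u^2)|(Q_i)+\|u\|_{L^2(Q_i)}^2\Bigr).
\]
Combined with your H\"older estimate $\|u\|_{L^2(Q_i)}^2\le\delta^{1/2}\|u\|_{L^4(Q_i)}^2$, where $\delta=\sup_i|{\rm supp}(u)\cap Q_1(y_i)|$, one absorbs the lower-order term whenever $C\delta^{1/2}<1/2$ to get $\|u\|_{L^2(Q_i)}^2\le 2C\delta^{1/2}|D(u^2)|(Q_i)$. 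Summing over $i$ now uses only $\sum_i|D(u^2)|(Q_i)=|D(u^2)|(\R^2)\le 2LM^{1/2}+M$, and no count of squares is needed. This yields $L^2\le 2C\delta^{1/2}(2LM^{1/2}+M)$ and hence the desired lower bound on $\delta$.
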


The lower semicontinuity of the boundary integral is proved in \cite[Lemma 19]{bucurcito}.

\begin{lemma}\label{teo:lscboundary}
Let $\Omega\subseteq\R^2$ be open, $k\in\N$, and let $(K_n),K\subset\Omega$ be compact sets with at most $k$ connected components, such that $\limsup_n\mathcal{H}^1(K_n)<+\infty$ and $K_n\to K$ in the Hausdorff metric.  Let $u_n\in H^1(\Omega\setminus K_n)$ be such that
\begin{equation}\label{eq:lscboundary29}
\limsup_{n\to+\infty}\|u_n\|_{H^1(\Omega\setminus K_n)} +\int_{K_n}\left[(u_n^+)^2+(u_n^-)^2\right]\:d\mathcal{H}^1<+\infty.
\end{equation}
Then, there exists $u\in H^1(\Omega\setminus K)$ such that, up to subsequences, we have
$$
u_n\to u\quad\text{strongly in $L^2_{loc}(\Omega)$},
$$
$$
\nabla u_n\rightharpoonup \nabla u\quad\text{weakly in $L^2(\Omega;\R^2)$},
$$
and
\begin{equation}\label{eq:lscboundary}
\int_K\left[(u^+)^2+(u^-)^2\right]\:d\mathcal{H}^1\le\liminf_{n\to+\infty}\int_{K_n}\left[(u_n^+)^2+(u_n^-)^2\right]\:d\mathcal{H}^1.
\end{equation}
\end{lemma}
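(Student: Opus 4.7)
My plan would be to split the argument into three stages: weak compactness and identification of the limit $u$, upgrade to strong $L^2_{loc}$ convergence, and lower semicontinuity of the boundary integral (the main technical step).

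\textbf{Stage 1 (compactness and weak limit).} Extending each $u_n$ and $\nabla u_n$ by zero on the Lebesgue-null set $K_n$, the uniform bound \eqref{eq:lscboundary29} makes $\{u_n\}$ bounded in $L^2(\Omega)$ and $\{\nabla u_n\}$ bounded in $L^2(\Omega;\R^2)$. Up to a subsequence $u_n\rightharpoonup u$ and $\nabla u_n\rightharpoonup g$ weakly. To identify $g=\nabla u$ on $\Omega\setminus K$, I pick any open $V\Subset\Omega\setminus K$: by the Hausdorff convergence $K_n\to K$ one has $V\Subset\Omega\setminus K_n$ for $n$ large, hence $u_n|_V\rightharpoonup u|_V$ weakly in $H^1(V)$, so $g|_V=\nabla u|_V$. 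Exhausting $\Omega\setminus K$ by such $V$'s yields $u\in H^1(\Omega\setminus K)$. Rellich on each $V\Subset\Omega\setminus K$ then gives strong $L^2(V)$-convergence; to upgrade to strong $L^2_{loc}(\Omega)$-convergence I use that $\mathcal{H}^1(K)<+\infty$ (by Golab's theorem, via the uniform bounds on $\mathcal{H}^1(K_n)$ and on the number of connected components), so the $\delta$-tubular neighborhood $K_\delta$ satisfies $|K_\delta|=O(\delta)$; combined with $L^p$-equi-integrability of $u_n$ for some $p>2$ (obtainable, e.g., from Ambrosio's $GSBV^2$-compactness applied to $u_n$ viewed as a $GSBV^2$ function with $J_{u_n}\subset K_n$), this rules out concentration of mass on $K$.

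\textbf{Stage 2 (slicing for the boundary LSC).} By Golab's theorem the limit $K$ has at most $k$ connected components and $\mathcal{H}^1(K)\le\liminf_n\mathcal{H}^1(K_n)<+\infty$, hence is $1$-rectifiable. At $\mathcal{H}^1$-a.e.\ point $x_0\in K$ there is an approximate tangent line $\tau$, and on a small disk $B_r(x_0)$ I choose coordinates $(s,t)$ with $s$ along $\tau$ and slice by the orthogonal lines $\ell_s=\{s\}\times(-r,r)$. Fubini applied to the uniform $H^1$-bound yields, for $\mathcal{H}^1$-a.e.\ $s$, weak $H^1$-convergence of the $1$D slice $u_n(s,\cdot)$ on $(-r,r)\setminus(K_n\cap\ell_s)$ to $u(s,\cdot)$ on $(-r,r)\setminus(K\cap\ell_s)$; the $1$D Sobolev embedding then upgrades this to convergence of the one-sided traces of $u_n$ at the points of $K_n\cap\ell_s$, whose squared contributions lower-bound $(u^+)^2+(u^-)^2$ at the tangent point $t(s)\in K\cap\ell_s$. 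Integrating in $s$ and applying Fatou to
\begin{equation*}
\int_{K_n\cap B_r(x_0)}\left[(u_n^+)^2+(u_n^-)^2\right]\,d\mathcal{H}^1
\end{equation*}
yields \eqref{eq:lscboundary} locally, and a countable covering of $K$ by such balls concludes.

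\textbf{Main obstacle.} The delicate point is the slicing analysis at an approximate tangent point $x_0\in K$: one must control how $K_n$ meets a typical transversal slice, because in principle $K_n$ could oscillate and intersect a slice in several points collapsing towards $t(s)$. The uniform bounds on both the number of connected components $k$ and on $\mathcal{H}^1(K_n)$ are essential here, as they prevent pathological oscillation and, combined with Hausdorff convergence, force the multi-point contributions to still dominate the single-point limit trace $(u^+)^2+(u^-)^2$, which is precisely what Fatou's inequality needs. Once this slicing/geometric lemma is in place, the rest is a routine Fubini--Sobolev--Fatou chain.
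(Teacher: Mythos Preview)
The paper does not give its own proof of this lemma; it is simply quoted as \cite[Lemma~19]{bucurcito}, so there is no in-paper argument to compare yours against.

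Your three-stage plan is the standard route for such results and is essentially what the cited reference does. Two technical points would need sharpening before this becomes a complete proof. In Stage~1, the higher $L^p$ integrability of $(u_n)$ does hold, but it is not a consequence of Ambrosio's $GSBV$ compactness theorem as such (under your bounds that theorem yields only convergence in measure); one obtains it instead by observing that $v_n:=u_n^2$ is uniformly bounded in $BV(\Omega)$ --- the absolutely continuous part of $Dv_n$ is $2u_n\nabla u_n\in L^1$ uniformly, and the jump part is controlled by the trace bound on $K_n$ --- and then invoking the $2$D embedding $BV\hookrightarrow L^2$, which gives $(u_n)$ bounded in $L^4(\Omega)$ and hence the equi-integrability you need. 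In Stage~2, when you rewrite $\int_{K\cap B_r(x_0)}[(u^+)^2+(u^-)^2]\,d\mathcal H^1$ via the area formula you pick up a Jacobian $\sqrt{1+|t'(s)|^2}\ge 1$, whereas the Eilenberg-type inequality you implicitly use on $K_n$ (namely $\int_{K_n}f\,d\mathcal H^1\ge\int\sum_{y\in K_n\cap\ell_s}f(y)\,ds$) carries no such factor. This mismatch is handled by a blow-up: at $\mathcal H^1$-a.e.\ $x_0\in K$ the rescaled set $r^{-1}(K-x_0)$ converges to the tangent line, so for $r$ small the Jacobian is at most $1+\varepsilon$, and a Besicovitch covering followed by $\varepsilon\to 0$ closes the argument. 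Your diagnosis of the ``main obstacle'' (multiple intersections of $K_n$ with a transversal slice) and its cure (retain the two outermost one-sided traces, discard the rest) is exactly right.
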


Notice that inequality \eqref{eq:lscboundary} holds even under the less restrictive hypothesis that $K$ is only a subset of the Hausdorff limit of $K_n$. This is the situation when, for instance, the open sets $\Omega_n$ $H^c$-converge to the open set $\Omega$: in that case, the compact sets $\partial\Omega_n$ $H$-converge to a compact set $K$, but, in general, $K$ is only a subset of $\partial\Omega$. See \cite[Proposition 2.2.16]{hpi} and the following discussion.

\bigskip

\paragraph{\textbf{Main results}}
Since the kind of optimization depends on the sign of the boundary parameter $\beta$, we split the discussion into two parts. Let $N\ge 3$ and $m,p>0$.

\medskip

Let us fix $\beta>0$ and let us study the problems
\begin{equation}\label{eq:poly1}
     \min\left\{\overline{\lambda}_{1,\beta}(P) :P\in\overline{\mathcal{P}_{N}},\ |P|\le m\right\},
    \end{equation}
\begin{equation}\label{eq:poly}
\min\left\{F(\overline{\lambda}_{1,\beta}(P),\ldots,\overline{\lambda}_{k,\beta}(P)):P\in\overline{\mathcal{P}_{N}},\ |P|\le m,\ P\subseteq D\right\},
\end{equation}
and
\begin{equation}\label{eq:polyper}
\min\left\{F(\overline{\lambda}_{1,\beta}(P),\ldots,\overline{\lambda}_{k,\beta}(P)):P\in\overline{\mathcal{P}_{N}},\ \widetilde{Per}(P)\le p\right\},
\end{equation}
where $D\subset\R^2$ is a nonempty compact set and $F\in C^1(\R^k)$ is nondecreasing in each variable with strictly positive derivative with respect to the first variable and such that
\begin{equation}\label{eq:fscoppia}
    \lim_{|\xi|\to+\infty}F(\xi)=+\infty.
\end{equation}
Our main result for $\beta>0$ is the following.

\begin{theorem}\label{mtheorem:main1}
    Problems \eqref{eq:poly1}, \eqref{eq:poly} and \eqref{eq:polyper} admit a solution in the class of generalized polygons. Any minimizer $P$ of \eqref{eq:poly1} or \eqref{eq:poly} verifies $|P|=m$ and any minimizer $P$ of \eqref{eq:polyper} verifies $\widetilde{Per}(P)=p$. Moreover, any solution of each problem has exactly $N$ sides counted with their multiplicity.
\end{theorem}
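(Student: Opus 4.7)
The plan is to use the direct method in $\overline{\mathcal{P}_N}$ endowed with the local $H^c$-convergence. Given a minimizing sequence $(P_n)$ for \eqref{eq:poly} (or \eqref{eq:polyper}), I would first test the objective against a fixed admissible competitor (say a regular $N$-gon saturating the constraint) in order to bound the functional along the sequence, and then use the coercivity \eqref{eq:fscoppia} together with the monotonicity of $F$ to deduce a uniform upper bound on every $\overline{\lambda}_{j,\beta}(P_n)$, $j=1,\ldots,k$. Taking an $L^2$-normalized first eigenfunction $u_n\in H^1(P_n)$, Lemma \ref{lem:conc} provides a unit square $Q_1(y_n)$ where $u_n$ concentrates uniformly; after translating $P_n$ by $-y_n$, the bound on $|P_n|$ (respectively, the isoperimetric bound $|P_n|\lesssim\widetilde{Per}(P_n)^2$), the control on $\overline{\lambda}_{1,\beta}(P_n)$ and the bound on the number of sides should force $(P_n)$ into a fixed compact $D$. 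Proposition \ref{Prop:compactness} then extracts an $H^c$-convergent subsequence with a limit $P^*\in\overline{\mathcal{P}_N}$ by the lower semicontinuity of the side count observed in the excerpt.

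Next I would upgrade compactness to a lower-semicontinuity statement for the functional. First, $|P_n|\to|P^*|$ (a continuity property of Lebesgue measure on $H^c$-converging polygonal sequences with a uniform bound on the number of sides) triggers Mosco convergence $H^1(P_n)\to H^1(P^*)$ via Theorem \ref{prop:Mosco-domains}; combined with Lemma \ref{teo:lscboundary} applied to approximate eigenfunctions and the characterization \eqref{eq:genEigenvalues}, this should yield
\[
\overline{\lambda}_{j,\beta}(P^*)\leq\liminf_{n\to+\infty}\overline{\lambda}_{j,\beta}(P_n),\qquad j=1,\ldots,k,
\]
and, via continuity and monotonicity of $F$, lower semicontinuity of the full functional, proving existence. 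For the saturation of the constraint, if $|P^*|<m$ I would pick $t=(m/|P^*|)^{1/2}>1$ and invoke \eqref{eq:scal} to strictly decrease every $\overline{\lambda}_{j,\beta}$, reaching a contradiction via the strict positivity of $\partial_1 F$; the perimeter case is analogous with $t=p/\widetilde{Per}(P^*)$.

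For the \emph{exactly $N$ sides} part, suppose instead that an optimizer $P^*$ had $N'<N$ sides counted with multiplicity. Being bounded, $P^*$ would exhibit at least one convex vertex $v$; the plan is to remove a small triangular cut $T_\varepsilon$ of area $\varepsilon$ at $v$, so that $P^*\setminus T_\varepsilon$ is still admissible and has $N'+1\leq N$ sides, and then to expand $\overline{R}_{P^*\setminus T_\varepsilon,\beta}$ along the first eigenfunction of $P^*$ to first order in $\varepsilon$, combined with a small dilation to restore the constraint, showing $\overline{\lambda}_{1,\beta}(P^*\setminus T_\varepsilon)<\overline{\lambda}_{1,\beta}(P^*)$, which contradicts optimality through the strict monotonicity of $F$ in its first variable. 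This corner-cut argument, in the spirit of \cite[Section 3.3]{henrot06}, is the hardest step: the sign of the variation of $\overline{\lambda}_{1,\beta}$ results from a delicate balance between the Dirichlet energy gained on the two short truncated sides and the boundary integral paid on the new cut edge, and must be tracked together with the dilation needed to remain admissible. A secondary technical point to be handled with care is the continuity $|P_n|\to|P^*|$ required to apply Theorem \ref{prop:Mosco-domains} when the limit $P^*$ develops inner boundary segments of multiplicity two.
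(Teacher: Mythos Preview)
Your existence and saturation arguments are essentially the paper's: direct method in $\overline{\mathcal{P}_N}$, concentration via Lemma \ref{lem:conc}, Mosco convergence through Theorem \ref{prop:Mosco-domains}, lower semicontinuity of each $\overline{\lambda}_{j,\beta}$ via Lemma \ref{teo:lscboundary}, and \eqref{eq:scal} for saturation. (The paper argues non-degeneracy of the limit by passing to the local $H^c$-topology and ruling out $P=\emptyset$ by contradiction rather than translating into a fixed box, but this is cosmetic.)

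The genuine gap is in the corner-cut step. You analyze only the effect of the cut on $\overline{\lambda}_{1,\beta}$ and then invoke $\partial_1 F>0$; but $F$ is nondecreasing in \emph{every} variable, so if the cut \emph{increases} some $\overline{\lambda}_{h,\beta}$ with $h\ge 2$, strict monotonicity in the first slot alone cannot conclude. The paper needs a second estimate, its Lemma \ref{lem:2dlambdak}: under the same cut, $\overline{\lambda}_{h,\beta}(P_\varepsilon)\le\overline{\lambda}_{h,\beta}(P)+o(\varepsilon)$ for every $h\ge 2$, obtained by testing the min--max with the span of the restrictions $u_1|_{P_\varepsilon},\ldots,u_h|_{P_\varepsilon}$ and tracking the maximizing coefficients as $\varepsilon\to 0$. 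Only then does a first-order Taylor expansion of $F$ (Proposition \ref{lem:sidenumber}), with the $-C\varepsilon$ drop in the first slot dominating the $o(\varepsilon)$ in the others, yield $F(\overline{\lambda}_\bullet(P_\varepsilon))<F(\overline{\lambda}_\bullet(P))$. Two smaller corrections: no dilation is needed after the cut, since both $|P|$ and $\widetilde{Per}(P)$ strictly decrease and $P_\varepsilon$ remains admissible; and the first-order mechanism for $\overline{\lambda}_{1,\beta}$ is not a Dirichlet-energy versus boundary-integral tradeoff but purely a comparison of boundary integrals --- the removed boundary satisfies $\mathcal{H}^1(s_\varepsilon)=C_1\mathcal{H}^1(\sigma_\varepsilon)$ with $C_1>1$, and since the first eigenfunction is bounded away from zero near the vertex (Remark \ref{rem:primapositivagen}), one gets $\beta\int_{\sigma_\varepsilon}u^2-\beta\int_{s_\varepsilon}(u^-)^2\le -C\varepsilon$, while the volume terms are $O(\varepsilon^2)$.
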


\medskip

    Let us now consider the case of a negative boundary parameter. To simplify the notation, we set $\eta:=-\beta>0$. As will be clarified in Remark \ref{rem:constraint}, the case of negative boundary parameter is more delicate since it is rather sensitive in terms of dilations or contractions of the domains. In other words, a bound from above on volume or on generalized perimeter allows sequences to shrink and, differently from the case of positive boundary parameter, each eigenvalue of order $k\ge 2$ can go both to $+\infty$ and $-\infty$. In view of the continuity of the volume under the convergence in measure of sets, it is then natural to consider the following maximization problem with volume constraint given in terms of \emph{prescription}:
\begin{equation}\label{eq:polyneg}
\max\left\{F(\overline{\lambda}_{1,-\eta}(P),\ldots,\overline{\lambda}_{k,-\eta}(P)):P\in\overline{\mathcal{P}_{N}},\ |P|= m\right\},
\end{equation}
where $F:\R^k\to\R$ is nondecreasing and upper semicontinuous in each variable and such that, for any variable $\xi_j$, it holds
$$
\lim_{\xi_j\to-\infty}F(\xi_1,\ldots,\xi_k)=-\infty.
$$
On the other hand, $\widetilde{Per}(\cdot)$ is not continuous, in general (see Remark \ref{rem:nonconservper}), unless we add some very restrictive hypotheses like convexity, but in this case $\widetilde{Per}(P)=\mathcal{H}^1(\partial P)$. For this reason, we consider the constraint "$\widetilde{Per}(P)\le p$" and we restrict our analysis to the maximization of the first eigenvalue.
We thus study the following problem
\begin{equation}\label{eq:polynegper}
\max\left\{\overline{\lambda}_{1,-\eta}(P): P\in\overline{\mathcal{P}_{N}},\ \widetilde{Per}(P)\le p\right\}.  
\end{equation}
Our main result for $\beta<0$ is the following.

\begin{theorem}\label{Th:main2}
    Problems \eqref{eq:polyneg} and \eqref{eq:polynegper} admit a solution in the class of generalized polygons. Every optimal polygon $P$ in \eqref{eq:polynegper} is union of simple polygons.
\end{theorem}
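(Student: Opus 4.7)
The plan is to argue by the direct method of the calculus of variations for the existence part, and then to exclude fractures on optimal polygons with $\overline{\lambda}_{k,-\eta}(P)<0$ by a geometric competitor argument. The overall scheme parallels the proof of Theorem \ref{mtheorem:main1}, but exploits one crucial sign effect: since $\beta=-\eta<0$, the boundary integral enters $\overline{R}_{P,-\eta}$ with a negative sign, so the lower semicontinuity of the boundary term furnished by Lemma \ref{teo:lscboundary} translates into the \emph{upper} semicontinuity of the Rayleigh quotient, which is exactly what is needed for a maximization problem.

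First I would confine a maximizing sequence $(P_n)\subset\overline{\mathcal{P}_N}$ to a fixed compact $D\subset\R^2$. In the perimeter case \eqref{eq:polynegper} the bound $\widetilde{Per}(P_n)\leq p$ controls the diameter of each connected component by $p/2$, and Remark \ref{pro:compoly}(iii) bounds the number of components by $\lfloor(N-1)/2\rfloor$, so that translations do the job. In the volume case \eqref{eq:polyneg}, the coercivity $\lim_{\xi_j\to -\infty}F=-\infty$ forces a uniform lower bound on each $\overline{\lambda}_{j,-\eta}(P_n)$, whence a uniform $H^1$-control on first eigenfunctions, and Lemma \ref{lem:conc} together with translations confines the sequence to a compact. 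Proposition \ref{Prop:compactness} then extracts an $H^c$-convergent subsequence $P_n\to P\in\overline{\mathcal{P}_N}$, and the admissibility of the limit follows from the Fatou-type bound $|P|\leq\liminf_n|P_n|\leq m$ and from passing to the limit on the endpoints of the at most $N$ sides, yielding $\widetilde{Per}(P)\leq\liminf_n\widetilde{Per}(P_n)\leq p$.

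Next I would prove that $\limsup_n\overline{\lambda}_{j,-\eta}(P_n)\leq \overline{\lambda}_{j,-\eta}(P)$ for each $j\leq k$. Fixing $\eps>0$ and a nearly optimal $j$-dimensional $S^*\subset H^1(P)$ realizing $\max_{S^*}\overline{R}_{P,-\eta}\leq \overline{\lambda}_{j,-\eta}(P)+\eps$, I would use the Mosco convergence given by Theorem \ref{prop:Mosco-domains} (available after a standard saturation ensuring $|P_n|\to |P|$) to produce strong $H^1$-recovery sequences of a basis of $S^*$ and span subspaces $S_n\subset H^1(P_n)$. Along these, the gradient and $L^2$ terms pass to the limit, while the negatively weighted boundary term is controlled from below by Lemma \ref{teo:lscboundary}, giving $\limsup_n\max_{S_n}\overline{R}_{P_n,-\eta}\leq \max_{S^*}\overline{R}_{P,-\eta}$ after a compactness argument on the sphere of $S^*$. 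Upper semicontinuity and monotonicity of $F$ in each variable then yield
$$
F\bigl(\overline{\lambda}_{1,-\eta}(P),\dots,\overline{\lambda}_{k,-\eta}(P)\bigr)\geq \limsup_n F\bigl(\overline{\lambda}_{1,-\eta}(P_n),\dots,\overline{\lambda}_{k,-\eta}(P_n)\bigr),
$$
so $P$ is a maximizer.

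Finally, I would assume for contradiction that an optimal $P$ of \eqref{eq:polynegper} with $\overline{\lambda}_{k,-\eta}(P)<0$ carries an inner boundary segment $\Gamma$ of multiplicity $2$. The simple polygon $P'$ obtained from $P$ by closing the fracture $\Gamma$ satisfies $|P'|=|P|$ and $\widetilde{Per}(P')=\widetilde{Per}(P)-2\mathcal{H}^1(\Gamma)<p$, and $H^1(P')$ embeds into $H^1(P)$ as the subspace of functions with matching traces on $\Gamma$. For such $u$ the identity
$$
\overline{R}_{P,-\eta}(u)=R_{P',-\eta}(u)-\frac{2\eta\int_\Gamma u^2\,d\mathcal{H}^1}{\int_P u^2\,dx}
$$
together with the min-max formula yields $\overline{\lambda}_{j,-\eta}(P)\leq \lambda_{j,-\eta}(P')$ for every $j$, with strict inequality for $j=1$ by the strict positivity on $\Gamma$ of the first Robin eigenfunction of $P'$ (strong maximum principle). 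A subsequent dilation $P'\mapsto tP'$ with $t=p/\widetilde{Per}(P')>1$ restores the perimeter constraint, and the scaling $\overline{\lambda}_{j,-\eta}(tP')=t^{-2}\overline{\lambda}_{j,-t\eta}(P')$ combined with the strict monotonicity of $\beta\mapsto\overline{\lambda}_{j,\beta}$ from Remark \ref{rem:scaling} is set up so that the hypothesis $\overline{\lambda}_{k,-\eta}(P)<0$ ensures this dilation further improves each negative eigenvalue, producing a strictly better admissible competitor and the desired contradiction. The hardest step is precisely this last quantitative dilation analysis: for $\beta<0$ the monotonicity of the eigenvalues under dilation is not automatic and must be extracted from the combined interplay of the two scaling ingredients with the negativity of $\overline{\lambda}_{k,-\eta}(P)$.
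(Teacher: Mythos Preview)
Your existence argument runs parallel to the paper's: $H^c$-compactness plus Mosco convergence plus Lemma~\ref{teo:lscboundary} (the sign flip turning lower into upper semicontinuity of the Rayleigh quotient) yields the result. The only substantive difference is in the a~priori confinement. The paper invokes Proposition~\ref{pro:isod} to bound directly the diameter and the number of connected components of a maximizing sequence from a uniform lower bound on $\overline{\lambda}_{k,-\eta}(P_n)$; your route via Lemma~\ref{lem:conc} is shakier in the volume case, since that lemma only localises \emph{one} unit square of mass and does not by itself bound the diameters of several drifting components.

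For the second assertion your approach diverges from the paper's and carries a genuine gap. The paper does \emph{not} dilate. It sets $P^\bullet:=\R^2\setminus\overline{U}$ with $U$ the unbounded component of $\R^2\setminus\overline{P}$, so that all fractures and quasi-holes are filled simultaneously; $P^\bullet$ is then automatically a finite union of simple polygons with $\widetilde{Per}(P^\bullet)\le\widetilde{Per}(P)\le p$, hence already admissible. The comparison of Rayleigh quotients goes by restricting test functions from $H^1(P^\bullet)$ to $H^1(P)$: since $P^\bullet\supset P$ and $\partial P^\bullet\subset\partial P$, both numerator and denominator of the quotient increase, and the hypothesis $\overline{\lambda}_{k,-\eta}(P)<0$ is used precisely to make the denominator increase favourable, yielding $\overline{\lambda}_{h,-\eta}(P^\bullet)\ge\overline{\lambda}_{h,-\eta}(P)$ for every $h\le k$.

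Your dilation step is therefore unnecessary --- the constraint is $\widetilde{Per}\le p$, not $=p$, so $P'$ is already admissible --- and it is also unjustified: in the identity $\overline{\lambda}_{j,-\eta}(tP')=t^{-2}\overline{\lambda}_{j,-t\eta}(P')$ the prefactor $t^{-2}<1$ helps when the eigenvalue is negative, but the shift $-t\eta<-\eta$ hurts by monotonicity in $\beta$, and nothing guarantees the first effect dominates. You flag this as the hardest step but leave it open; once the dilation is dropped, your contradiction dissolves, because $F$ is only assumed nondecreasing and a strict gain on $\lambda_1$ alone does not force $F(P')>F(P)$. There is also a secondary admissibility issue with your competitor: if the multiplicity-two segment $\Gamma$ is a slit linking a quasi-hole to $\partial\overline P$ (cf.\ the construction of $P^\bullet$ in Proposition~\ref{Prop:furtherproperties}), healing $\Gamma$ alone disconnects the complement, so $P'\notin\overline{\mathcal{P}_N}$. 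The paper's $P^\bullet$ construction sidesteps both difficulties by filling everything at once.
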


\section{Positive boundary parameter}\label{sec:positive}

We start this section with an important remark. 

 \begin{remark}[Generalized eigenvalues of a cracked set are actually eigenvalues]
 Following the approach of \cite{bugitr22} (see also \cite{citogiaco} for analogous results in any dimension), we get that the $\overline{\lambda}_{k,\beta}(P)$ are actually eigenvalues of the elliptic operator defined by the bilinear form
 $$
 \overline{\mathcal{E}}_\beta(u,v):=\int_P\nabla u\cdot\nabla v\:dx+\beta\int_{\partial P}\left[u^+v^++u^-v^-\right]\:d\mathcal{H}^{1},\quad u,v\in H^1(P).
 $$
 Clearly, the latter definition coincides with \eqref{eq:RobinForm} when $P$ is a simple polygon. The minimum in the Courant-Fischer formula is thus attained, namely
\begin{equation}\label{eq:genEigenvalues2}
\overline{\lambda}_{k,\beta}(P)=\min_{S\in\mathcal{S}_k}\max_{u\in S\setminus\left\{0\right\}}\overline{R}_{P,\beta}(u),
\end{equation}
where $\overline{R}_{P,\beta}(u)$ is defined in \eqref{GenRaiQuotient}. This fact allows us to test the Rayleigh quotient with actual eigenfunctions.
\end{remark}

\begin{remark}\label{rem:primapositivagen}
If $P$ is a connected generalized polygon and $u\in H^1(P)$ is a positive eigenfunction for $\overline{\lambda}_{1,\beta}(P)$ for some $\beta>0$, then there exists $\alpha>0$ such that $u\ge\alpha$, see \cite[Theorem 1]{BGN22}.
\end{remark}

An important result is the following adaptation to our context of \cite[Theorem 3.15]{citogiaco}.

\begin{theorem}[\bf Boundedness of the eigenfunctions]
\label{th:boundeigen}
Let $P\in\overline{\mathcal{P}_N}$. Then, there exists a positive constant $C$, depending only on $|P|$ and $\beta$, such that for every $L^2$-normalized eigenfuction $u\in H^1(P)$ for $\overline{\lambda}_{h,\beta}(P)>0$ it holds
\begin{equation}
\label{eq:uinfty}
\|u\|_\infty\le C \overline{\lambda}_{h,\beta}(P)^2.
\end{equation}
\end{theorem}

A consequence of the previous result is that, given a sequence of generalized polygons $P_n$ such that $|P_n|=m$ and $\sup\overline{\lambda}_{h,\beta}(P_n)<+\infty$, their $L^2$-normalized eigenfuctions $u_n\in H^1(P_n)$ are uniformly bounded in $L^\infty$.

\bigskip

In order to count the sides of the optimal polygons, we adapt a cutting technique used in \cite{citoconvex}. There, the author proved that optimal convex shapes for a class of Robin spectral functionals are $C^1$; the technique exploits an argument by contradiction addressed to remove possible corners. In the present framework, we cut a generalized polygon near the vertex of a convex corner to show that increasing the number of sides (within the prescribed constraint) decreases the value of the functional. Aiming at cutting a convex corner of the  polygon $P$, if there is such a corner with both sides of multiplicity one, we argue on that corner. If all the convex corners have a side of multiplicity two, we cut the corner only on one side as follows.

\begin{figure}[h!]
\begin{tikzpicture}[>=>>>]
\label{fig:cutpol}
\fill[orange!80!gray!] (-2,0) .. controls (-2,0) and (2,0.5) ..(2,0.5) .. controls (2,0.5) and (4,0.5) ..(4,0.5) .. controls (4,0.5) and (5,-.3) .. (5,-.3) .. controls (5,-.3) and (10,2.2) .. (10,2.2) .. controls (10,2.2) and (3.5,6) .. (3.5,6).. controls (3.5,6) and (0,5) .. (0,5).. controls (0,5) and (-2,0) .. (-2,0);
\draw[line width=.7pt]  (-2,0) .. controls (-2,0) and (2,0.5) ..(2,0.5) .. controls (2,0.5) and (4,0.5) ..(4,0.5) .. controls (4,0.5) and (5,-.3) .. (5,-.3) .. controls (5,-.3) and (10,2.2) .. (10,2.2) .. controls (10,2.2) and (3.5,6) .. (3.5,6).. controls (3.5,6) and (0,5) .. (0,5).. controls (0,5) and (-2,0) .. (-2,0);
\draw[line width=.7pt]  (-2,0) .. controls (-2,0) and (1,2) ..(1,2) .. controls (1,2) and (3,1.5) ..(3,1.5);
\draw[line width=.7pt] (5,-.3) .. controls (5,-.3) and (4.8,1.2) .. (4.8,1.2);
\draw[line width=.7pt] (7.2,2).. controls (7.2,2) and (10,2.2) .. (10,2.2);
\draw[line width=.7pt] (3.5,6) .. controls (3.5,6) and (1,4) .. (1,4) .. controls (1,4) and (1.1,3.5) .. (1.1,3.5);
\draw[line width=.7pt] (0,5).. controls (0,5) and (0.2,4.3) .. (0.2,4.3);
\draw (5,3.3) node[anchor=north east] {$P$};
\draw (3.5,6) node[anchor=south] {$x_0$};
\draw (3.6,6-3*.25) node {$\alpha_+$};
\draw (2.1,6-3*.25) node {$\alpha_-$};
\draw (2,4.3) node {$\ell_{x_0}$};
\end{tikzpicture}
	\caption{In the polygon $P$ all convex corners are determined by self-intersected sides; choosing the vertex $x_0$ as above, without loss of generality, we can argue only on $P\cap\alpha_+$ or $P\cap\alpha_-$.}
    \label{fig:cuttriangle}
	\end{figure}
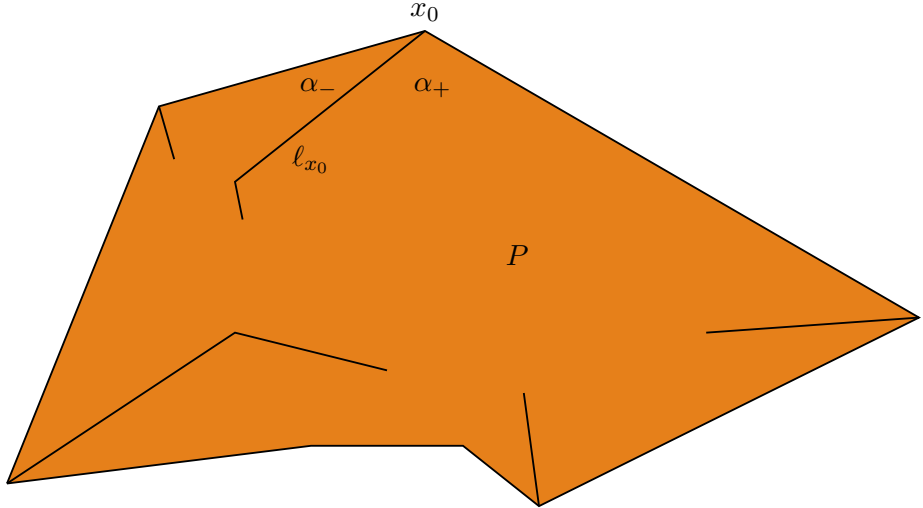

For definiteness, assume that $x_0=0$ is the vertex of a convex corner of the polygon $P$ and that the segment $\ell_{x_0}$ vertexed at $x_0$ has double multiplicity. Assume also that $P$ lies in the half-plane $\{x_2<0\}$ and apply the technique only on one side of the segment, say on $$P\cap\alpha_+=P\cap\{x\cdot\nu_{\ell_{x_0}}>0\},$$
where $\nu_{\ell_{x_0}}$ is one of the normal unit vectors to $\ell_{x_0}$. Accordingly, we define the following sets:
\begin{equation}\label{eq:2depsilonsets}
m_\varepsilon:=P\cap\alpha_+\cap\left\{x_2>-\varepsilon\right\},\ P_\varepsilon:=P\setminus m_\varepsilon,\ \sigma_\varepsilon:=P\cap\alpha_+\cap\left\{x_2=-\varepsilon\right\},\ s_\varepsilon:=\partial m_\varepsilon\setminus \sigma_\varepsilon.
\end{equation}
Notice that 
$$
\max_{x\in \overline{m_\varepsilon}}\text{dist}(x,\sigma_\varepsilon)=\text{dist}(0,\sigma_\varepsilon)=\varepsilon.
$$
\begin{figure}[h!]
\begin{tikzpicture}[>=>>>]
\label{fig:cutpoly}
\fill[orange!80!gray!] (-2,0) .. controls (-2,0) and (2,0.5) ..(2,0.5) .. controls (2,0.5) and (4,0.5) ..(4,0.5) .. controls (4,0.5) and (5,-.3) .. (5,-.3) .. controls (5,-.3) and (10,2.2) .. (10,2.2) .. controls (10,2.2) and (3.5+6.5*.4,6-3.8*.4) .. (3.5+6.5*.4,6-3.8*.4) .. controls (3.5+6.5*.4,6-3.8*.4) and (1.6,6-3.8*.4) .. (1.6,6-3.8*.4) .. controls  (1.6,6-3.8*.4)  and (3.5,6) .. (3.5,6) .. controls (3.5,6) and (0,5) .. (0,5).. controls (0,5) and (-2,0) .. (-2,0);
\draw[line width=.7pt]  (-2,0) .. controls (-2,0) and (2,0.5) ..(2,0.5) .. controls (2,0.5) and (4,0.5) ..(4,0.5) .. controls (4,0.5) and (5,-.3) .. (5,-.3) .. controls (5,-.3) and (10,2.2) .. (10,2.2) .. controls (10,2.2) and (3.5,6) .. (3.5,6).. controls (3.5,6) and (0,5) .. (0,5).. controls (0,5) and (-2,0) .. (-2,0);
\draw[line width=.7pt]  (-2,0) .. controls (-2,0) and (1,2) ..(1,2) .. controls (1,2) and (3,1.5) ..(3,1.5);
\draw[line width=.7pt] (5,-.3) .. controls (5,-.3) and (4.8,1.2) .. (4.8,1.2);
\draw[line width=.7pt] (7.2,2).. controls (7.2,2) and (10,2.2) .. (10,2.2);
\draw[line width=.7pt] (3.5,6) .. controls (3.5,6) and (1,4) .. (1,4) .. controls (1,4) and (1.1,3.5) .. (1.1,3.5);
\draw[line width=.7pt] (0,5).. controls (0,5) and (0.2,4.3) .. (0.2,4.3);
\draw[line width=.7pt] (3.5+6.5*.4,6-3.8*.4) .. controls (3.5+6.5*.4,6-3.8*.4) and (1.6,6-3.8*.4) ..(1.6,6-3.8*.4);
\draw[line width=.7pt,dashed] (1.6,6-3.8*.55) .. controls (1.6,6-3.8*.55) and (1.6,6-3.8*.4) ..(1.6,6-3.8*.4);
\draw[line width=.7pt,dashed] (3.5+6.5*.4,6-3.8*.4) .. controls (3.5+6.5*.4,6-3.8*.4) and (3.5+6.5*.4,6-3.8*.55) ..(3.5+6.5*.4,6-3.8*.55);
\draw (5,2.3) node[anchor=north east] {$P_\varepsilon$};
\draw (3.6,6-3.8*.25) node {$m_\varepsilon$};
\draw (3.5/2+6.5*.2+1.6/2,6-3.8*.55) node[anchor=south] {$\sigma_\varepsilon$};
\draw (4+6.5*.2,6.5-3.8*.2) node[anchor=south west] {$s_\varepsilon$};
\draw[<->,dashed] (3.5+6.5*.4,6-3.8*.55) -- (1.6,6-3.8*.55);
\draw[->,dashed] (4.1+6.5*.2,6.6-3.8*.2) -- (3.5+6.5*.2,6-3.8*.2);
\draw[->,dashed] (4.1+6.5*.2,6.6-3.8*.2) -- (2.55,6-3.8*.2);
\end{tikzpicture}
\caption{The cutting procedure of $P$}
\end{figure}
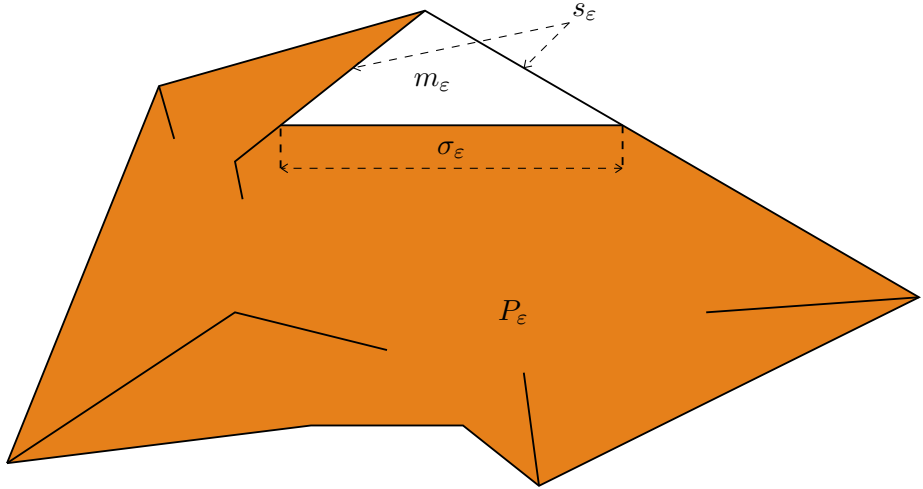

\begin{lemma}\label{lem:2dlambda1}
Let $P\in\overline{\mathcal{P}_N}$. Then, there exist $\varepsilon_0>0$ and $C=C(P,\beta)>0$ such that for every $0<\varepsilon<\varepsilon_0$, we have
\begin{equation}\label{eq:2dlambda1}
\overline{\lambda}_{1,\beta}(P_\varepsilon)\le \overline{\lambda}_{1,\beta}(P)-C\varepsilon.
\end{equation}
\begin{proof}
Let us assume that $P$ is connected. We start observing that both $\mathcal{H}^1(\sigma_\varepsilon)$ and $\mathcal{H}^1(s_\varepsilon)$ are infinitesimal of order $\varepsilon$, hence  $|m_\varepsilon|= \varepsilon\mathcal{H}^1(\sigma_\varepsilon)/2$ is infinitesimal of order $\varepsilon^2$ as $\varepsilon\to 0$. Moreover, since the $m_\varepsilon$ are triangles and the $\sigma_\varepsilon$ are parallel, there exists a constant $C_1>1$, depending only on $P$, such that
\begin{equation}\label{eq:trieps}
\mathcal{H}^1(s_\varepsilon)=C_1\mathcal{H}^1(\sigma_\varepsilon).
\end{equation}
Let us compare $\overline{\lambda}_{1,\beta}(P_\varepsilon)$ with $\overline{\lambda}_{1,\beta}(P)$. Let us consider $u\in H^1(P)$ an $L^2(P)$-normalized eigenfunction for $\overline{\lambda}_{1,\beta}(P)$ positively bounded away from zero (see Remark \ref{rem:primapositivagen}) and denote by $u_\varepsilon$ its restriction to $P_\varepsilon$ extended by zero outside $P_\varepsilon$. We have that $u_\varepsilon\in H^1(P_\varepsilon)$ is a test function for $\overline{\lambda}_{1,\beta}(P_\varepsilon)$ and it holds
$$(u_\varepsilon)^+=u,\ (u_\varepsilon)^-=0\quad \text{on $\sigma_\varepsilon$}$$
$$(u_\varepsilon)^+=u^+,\ (u_\varepsilon)^-=0\quad \text{on $\partial P_\varepsilon
\setminus\sigma_\varepsilon$}.
$$

\begin{figure}[h!]
\label{fig:zoomcut}
\begin{tikzpicture}[>=>>>, scale=2]
\fill[orange!80!gray!] (138/19,3.8) .. controls (138/19,3.8) and (3.5+6.5*.4,6-3.8*.4) .. (3.5+6.5*.4,6-3.8*.4) .. controls (3.5+6.5*.4,6-3.8*.4) and (1.6,6-3.8*.4) .. (1.6,6-3.8*.4) .. controls  (1.6,6-3.8*.4)  and (3.5,6) .. (3.5,6) .. controls (3.5,6) and (0,5) .. (0,5) .. controls (0,5) and (-0.48,3.8) .. (-0.48,3.8);
\draw[line width=.7pt] (138/19,3.8) .. controls (138/19,3.8) and (3.5,6) .. (3.5,6).. controls (3.5,6) and (0,5) .. (0,5) .. controls (0,5) and (-0.48,3.8) .. (-0.48,3.8);
\draw[line width=.7pt] (3.5,6) .. controls (3.5,6) and (1,4) .. (1,4) .. controls (1,4) and (1.04,3.8) .. (1.04,3.8);
\draw[line width=.7pt] (0,5).. controls (0,5) and (0.2,4.3) .. (0.2,4.3);
\draw[line width=.7pt] (3.5+6.5*.4,6-3.8*.4) .. controls (3.5+6.5*.4,6-3.8*.4) and (1.6,6-3.8*.4) ..(1.6,6-3.8*.4);
\draw (3.6,6-3.8*.25) node {$m_\varepsilon$};
\draw (2.75,6-3.8*.32) node {$(u_\varepsilon)^-=0$};
\draw (1.7,6-3.8*.22) node {$(u_\varepsilon)^+=u^+$};
\draw (3.5/2+6.5*.2+1.6/2,6-3.8*.55) node[anchor=south] {$(u_\varepsilon)^+=u$};
\draw[->,dashed] (1.75,6-3.8*.25) -- (2,4.8);
\draw[->,dashed] (2.9,6-3.8*.3) -- (2.55,6-3.8*.2);
\draw[->,dashed] (2.95,6-3.8*.34) -- (3,6-3.8*.4);
\draw[->,dashed] (3.5/2+6.5*.2+1.6/2+.2,6-3.8*.55+.3) -- (4.3,6-3.8*.4);
\end{tikzpicture}
\end{figure}

We thus have
\begin{equation}\label{eq:2dlambda1primoconto}
\begin{split}
\overline{\lambda}_{1,\beta}(P_\varepsilon)& \le \frac{\displaystyle\int_{P_\varepsilon}|\nabla u_\varepsilon|^2\:dx+\beta\int_{\partial P_\varepsilon}\left((u_\varepsilon^+)^2+(u_\varepsilon^-)^2\right)\:d\mathcal{H}^1}{\displaystyle\int_{P_\varepsilon} u_\varepsilon^2\:dx}
\\ 
&\le\frac{\displaystyle\int_{P}|\nabla u|^2\:dx+\beta\int_{\partial P}\left((u^+)^2+(u^-)^2\right)\:d\mathcal{H}^1+\beta\int_{\sigma_\varepsilon}u^2\:d\mathcal{H}^1-\beta\int_{s_\varepsilon}(u^-)^2\:d\mathcal{H}^1}{\displaystyle 1-\int_{m_\varepsilon} u^2\:dx}\\
&\le\left[\overline{\lambda}_{1,\beta}(P)+\beta\int_{\sigma_\varepsilon}u^2\:d\mathcal{H}^1-\beta\int_{s_\varepsilon}(u^-)^2\:d\mathcal{H}^1\right]\left(1+2\int_{m_\varepsilon} u^2\:dx\right)\\
&\le\overline{\lambda}_{1,\beta}(P)+ \beta\left(\int_{\sigma_\varepsilon}u^2\:d\mathcal{H}^1-\int_{s_\varepsilon}(u^-)^2\:d\mathcal{H}^1\right)+C_2\varepsilon^2
\end{split}
\end{equation}
for $\varepsilon$ small enough. We can consider the restriction of $u$ on $m_\varepsilon$ continuous on $\overline{m_\varepsilon}$, since it solves the mixed boundary value problem
$$
 \left\{\begin{array}{ll}
        \Delta v + \overline{\lambda}_{1,\beta}(P) u = 0 \qquad &\text{in\ \ } m_\varepsilon
        \\
        \displaystyle \frac{\partial v}{\partial\nu} + \beta v =0 & \text{on\ \ }  s_\varepsilon
        \\
        \displaystyle  v=u & \text{on\ \ }  \sigma_\varepsilon
        \end{array}\right.
$$
(see \cite{Mghazli}), so we can argue by continuity as in the Lipschitz case. To ease the readability, we denote such solution on $m_\varepsilon$ still by $u$. Let us fix $\delta>0$, with $(u(0)+\delta)^2\le C_1(u(0)-\delta)^2$. There exists $\varepsilon_0>0$ such that
$$
0<u(0)-\delta< u(x)<u(0)+\delta
$$
for every $x\in\overline{m_{\varepsilon_0}}$, since  $u(0)>0$.  In particular, the trace $u^-(x)$ of $u$ for $x\in s_\varepsilon$ satisfies the above estimate as well.

Now, $m_\varepsilon$ is decreasing in $\varepsilon$ with respect to inclusions, then we can choose $\varepsilon_0$ small enough so that $P_\varepsilon$ satisfies  \eqref{eq:2dlambda1primoconto}. Combining the latter with \eqref{eq:trieps} we get
\begin{equation*}
\begin{split}
\overline{\lambda}_{1,\beta}(P_\varepsilon)&\le\overline{\lambda}_{1,\beta}(P)+\beta\left(\mathcal{H}^1(\sigma_\varepsilon)(u(0)+\delta)^2-\mathcal{H}^1(s_\varepsilon)(u(0)-\delta)^2\right)+C_2\varepsilon^2\\
&\le\overline{\lambda}_{1,\beta}(P)+\beta\mathcal{H}^1(\sigma_\varepsilon)\left((u(0)+\delta)^2-C_1(u(0)-\delta)^2\right)+C_2\varepsilon^2\\
&=\overline{\lambda}_{1,\beta}(P)-\beta C_3\varepsilon+C_2\varepsilon^2\le\overline{\lambda}_{1,\beta}(P)-C\varepsilon,
\end{split}
\end{equation*}
where the last constant $C$ takes into account all the previous constants and depends only on the domain $P$ and on $\beta$.

The previous argument holds also if $P$ is not connected: it is enough to apply it on a connected component of $P$ realizing $\overline{\lambda}_{1,\beta}(P)$.
\end{proof}
\end{lemma}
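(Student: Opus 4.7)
\emph{Strategy.} The plan is to test $\overline{\lambda}_{1,\beta}(P_\varepsilon)$ with the restriction to $P_\varepsilon$ of an $L^2$-normalized first eigenfunction $u$ of $P$ and to track the first-order variation of the generalized Rayleigh quotient under the cut. By Remark \ref{rem:primapositivagen} applied to the connected component of $P$ containing $x_0 = 0$, one has $u \geq \alpha > 0$ there; moreover, $u|_{m_\varepsilon}$ solves a mixed Robin/Dirichlet problem on the small triangular region $m_\varepsilon$, so standard regularity for such problems at a Lipschitz corner ensures that $u$ is continuous at $x_0$. Hence, given any $\delta > 0$, for $\varepsilon$ small enough one has $u(0) - \delta < u(x) < u(0) + \delta$ uniformly on $\overline{m_\varepsilon}$. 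Geometrically, the triangles $m_\varepsilon$ are mutually similar, so $|m_\varepsilon| = O(\varepsilon^2)$, while $\mathcal{H}^1(\sigma_\varepsilon)$ and $\mathcal{H}^1(s_\varepsilon)$ are both of order $\varepsilon$, with a fixed ratio $C_1 := \mathcal{H}^1(s_\varepsilon)/\mathcal{H}^1(\sigma_\varepsilon) > 1$ determined by the convex corner angle at $x_0$.

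\emph{Computation.} Setting $u_\varepsilon := u|_{P_\varepsilon}$ and writing out $\overline{R}_{P_\varepsilon,\beta}(u_\varepsilon)$, three terms arise from removing $m_\varepsilon$: the Dirichlet energy decreases by the nonnegative $\int_{m_\varepsilon}|\nabla u|^2\,dx$; the boundary term loses $\beta \int_{s_\varepsilon}(u^-)^2\,d\mathcal{H}^1$ and gains $\beta \int_{\sigma_\varepsilon} u^2\,d\mathcal{H}^1$ from the new face $\sigma_\varepsilon$; and the denominator decreases by $\int_{m_\varepsilon} u^2\,dx = O(\varepsilon^2)$. Using the pointwise pinching of $u$ near $x_0$, the boundary contribution is bounded above by
\[
\beta\, \mathcal{H}^1(\sigma_\varepsilon) \bigl[(u(0)+\delta)^2 - C_1\,(u(0)-\delta)^2\bigr].
\]
Since $C_1 > 1$ and $u(0) \geq \alpha > 0$, fixing $\delta$ small enough makes the bracket strictly negative, producing a linear-in-$\varepsilon$ gain of order $-c\varepsilon$ with $c = c(P,\beta) > 0$. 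Expanding the Rayleigh quotient with denominator $1 + O(\varepsilon^2)$ and absorbing the $O(\varepsilon^2)$ remainders into this linear gain yields the claimed inequality $\overline{\lambda}_{1,\beta}(P_\varepsilon) \leq \overline{\lambda}_{1,\beta}(P) - C\varepsilon$ for all $\varepsilon \in (0, \varepsilon_0)$.

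\emph{Main obstacle.} The delicate point is the pointwise control of $u$ up to the corner $x_0$: the boundary of $m_\varepsilon$ carries mixed Robin/Dirichlet data at a Lipschitz vertex, so the required continuity is not immediate from standard elliptic theory on smooth domains; this is where the Grisvard-type regularity results for polygonal vertices (invoked through \cite{Mghazli}) enter. Once this input is secured, the rest of the proof is a direct Rayleigh-quotient perturbation combined with elementary bookkeeping of similar triangles.
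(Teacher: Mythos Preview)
Your proposal is correct and follows essentially the same approach as the paper: test $\overline{\lambda}_{1,\beta}(P_\varepsilon)$ with the restricted first eigenfunction, use Remark~\ref{rem:primapositivagen} for strict positivity, invoke the Mghazli-type regularity for the mixed Robin/Dirichlet problem on $m_\varepsilon$ to get continuity at the vertex, and exploit the similar-triangle ratio $C_1=\mathcal{H}^1(s_\varepsilon)/\mathcal{H}^1(\sigma_\varepsilon)>1$ to extract a linear-in-$\varepsilon$ gain from the boundary term while the volume corrections are $O(\varepsilon^2)$. Your identification of the continuity at the corner as the only nontrivial input is exactly the point the paper highlights.
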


Higher order eigenvalues are considered in the following result.

\begin{lemma}\label{lem:2dlambdak}
Let $P\in\overline{\mathcal{P}_N}$ be connected. Then, for every $h\in\N, h\ge 2$, 
\begin{equation}\label{eq:2dlambdak}
\overline{\lambda}_{h,\beta}(P_\varepsilon)\le \overline{\lambda}_{h,\beta}(P)+o(\varepsilon).
\end{equation}
\begin{proof}
Let $\left\{u_1,\ldots,u_h\right\}\subset H^1(P)$ be a $L^2$-orthonormal system of $h$ eigenfunctions associated with $\overline{\lambda}_{1,\beta}(P),\ldots,\overline{\lambda}_{h,\beta}(P)$ respectively and define $S:=\text{span}\left\{u_1,\ldots,u_h\right\}$. Then, for $\varepsilon$ sufficiently small,  $S_\varepsilon:=\text{span}\left\{u_1|_{P_\varepsilon},\ldots,u_h|_{P_\varepsilon}\right\}$ is a test space for the variational characterization \eqref{eq:genEigenvalues} of $\overline{\lambda}_{h,\beta}(P_\varepsilon)$. In particular, we can consider on $P_\varepsilon$ test functions of the form $\sum_{i=1}^h\alpha_i^\varepsilon u_i$ with $\sum_{i=1}^h\left(\alpha_i^\varepsilon\right)^2=1$. This in turn implies that, up to subsequences, $\alpha_i^\varepsilon\to\alpha_i\in[-1,1]$ and
$$
\sum_{i=1}^h\alpha_i^\varepsilon u_i\ \longrightarrow\ \sum_{i=1}^h\alpha_i u_i
$$
strongly in $H^1(P)$. Let us denote by $(\overline{\alpha}_1^\varepsilon,\ldots,\overline{\alpha}_h^\varepsilon)$ the $h$-tuple of coefficients that maximizes $\overline{R}_{P_\varepsilon,\beta}$ in $S_\varepsilon$ with $\sum_{i=1}^h\left(\overline{\alpha}_i^\varepsilon\right)^2=1$ and by $(\overline{\alpha}_1,\ldots,\overline{\alpha}_h)$ the $h$-tuple of coefficients such that, for every $i=1,\ldots,h$, $\overline{\alpha}_i^\varepsilon\to\overline{\alpha}_i$ (up to subsequences).For any $\varepsilon>0$ sufficiently small, we can estimate $\overline{\lambda}_{h,\beta}(P_\varepsilon)$ using $S_\varepsilon$ as a test space:
\begin{align}\label{eq:2dlambdakepsilon}
\overline{\lambda}_{h,\beta}(P_\varepsilon)\le&\max_{\overset{\alpha^\varepsilon_1,\ldots,\alpha^\varepsilon_h\in\R}{\sum_i(\alpha_i^\varepsilon)^2=1}}\frac{\displaystyle\int_{P_\varepsilon}\Big|\sum_i\alpha^\varepsilon_i\nabla u_i\Big|^2\:dx+\beta\int_{\partial P_\varepsilon}\left[\Big(\sum_i\alpha_i^\varepsilon u_i^-\Big)^2+\Big(\sum_i\alpha_i^\varepsilon u_i^+\Big)^2\right]\:d\mathcal{H}^1}{\displaystyle\int_{P_\varepsilon}\Big(\sum_i\alpha_i^\varepsilon u_i\Big)^2\:dx}
\nonumber \\ 
\le&\frac{\displaystyle \int_{P}\Big|\sum_i\overline{\alpha}^\varepsilon_i\nabla u_i\Big|^2\:dx+\beta\int_{\partial P}\left[\Big(\sum_i\overline{\alpha}_i^\varepsilon u_i^-\Big)^2+\Big(\sum_i\overline{\alpha}_i^\varepsilon u_i^+\Big)^2\right]\:d\mathcal{H}^1}{1-\displaystyle\int_{m_\varepsilon}\Big(\sum_i\overline{\alpha}_i^\varepsilon u_i\Big)^2\:dx}
\\ \nonumber
&+\frac{\displaystyle \beta\int_{\sigma_\varepsilon}\Big(\sum_i\overline{\alpha}_i^\varepsilon u_i\Big)^2\:d\mathcal{H}^1-\beta\int_{s_\varepsilon}\Big(\sum_i\overline{\alpha}_i^\varepsilon u_i^-\Big)^2\:d\mathcal{H}^1}{1-\displaystyle\int_{m_\varepsilon}\Big(\sum_i\overline{\alpha}_i^\varepsilon u_i\Big)^2\:dx}
\\ \nonumber
\le&\overline{\lambda}_{h,\beta}(P)+\beta\int_{\sigma_\varepsilon}\Big(\sum_i\overline{\alpha}_i^\varepsilon u_i\Big)^2\:d\mathcal{H}^1-\beta\int_{s_\varepsilon}\Big(\sum_i\overline{\alpha}_i^\varepsilon u_i^-\Big)^2\:d\mathcal{H}^1+ C|m_\varepsilon|
\end{align}
where we have used that 
$$\overline{R}_{P,\beta}\left(\sum_i\overline{\alpha}_i^\varepsilon u_i\right)\le\max_{v\in S}\overline{R}_{P,\beta}(v)=\lambda_{h,\beta}(P).$$
Now, as highlighted at the beginning of Lemma \ref{lem:2dlambda1}, $\mathcal{H}^1(\sigma_\varepsilon)$ and $\mathcal{H}^1(s_\varepsilon)$ are both infinitesimal of order $\varepsilon$ and $|m_\varepsilon|$ is infinitesimal of order $\varepsilon^2$ as $\varepsilon\to 0$; moreover,  $\overline{\alpha}_i^\varepsilon-\overline{\alpha}_i\to 0$. Then, summing and subtracting the contribution of $\sum_i\overline{\alpha}_i u_i$ in the boundary integrals in  \eqref{eq:2dlambdakepsilon}, we get
\begin{equation}\label{eq:2dlambdakepsilon2}
\begin{split}
\overline{\lambda}_{h,\beta}(P_\varepsilon)\le&\overline{\lambda}_{h,\beta}(P)+\beta\Bigg[\int_{\sigma_\varepsilon}\Big(\sum_i(\overline{\alpha}_i^\varepsilon-\overline{\alpha}_i) u_i+\overline{\alpha}_i u_i\Big)^2\:d\mathcal{H}^1\\
&-\int_{s_\varepsilon}\Big(\sum_i(\overline{\alpha}_i^\varepsilon-\overline{\alpha}_i) u^-_i+\overline{\alpha}_i u_i^-\Big)^2\:d\mathcal{H}^1\Bigg]+ C\varepsilon^2\\
\le&\overline{\lambda}_{h,\beta}(P)+\beta\left(\int_{\sigma_\varepsilon}\Big(\sum_i\overline{\alpha}_i u_i\Big)^2\:d\mathcal{H}^1-\int_{s_\varepsilon}\Big(\sum_i\overline{\alpha}_i u_i^-\Big)^2\:d\mathcal{H}^1\right)+o(\varepsilon).
\end{split}
\end{equation}
To conclude, we now consider two possible situations. If
\begin{equation*}
\Big(\sum_i\overline{\alpha}_i u_i(0)\Big)^2\neq 0,
\end{equation*}
(where the pointwise value $u_i(0)$ is meant in the same sense as in Lemma \ref{lem:2dlambda1}) then, for any sufficiently small $\varepsilon$, we can proceed as in Lemma \ref{lem:2dlambda1} and conclude that
$$
\int_{\sigma_\varepsilon}\Big(\sum_i\overline{\alpha}_i u_i\Big)^2\:d\mathcal{H}^1-\int_{s_\varepsilon}\Big(\sum_i\overline{\alpha}_i u_i^-\Big)^2\:d\mathcal{H}^1\le 0.
$$
On the other hand, if
\begin{equation*}
\Big(\sum_i\overline{\alpha}_i u_i(0)\Big)^2=0,
\end{equation*}
the uniform continuity of the eigenfunctions $u_i$ on $m_\varepsilon$ implies that $\sum_i\overline{\alpha}_i u_i$ has values close to $\sum_i\overline{\alpha}_i u_i(0)=0$ in $m_\varepsilon$, namely that
$$\left|\sum_i\overline{\alpha}_i u_i\right|\le\delta(\varepsilon)$$
in $m_\varepsilon$, where $\delta(\varepsilon)\to 0$. Then, both boundary integrals are $o(\varepsilon)$ as $\varepsilon\to 0$. In both the possible situations, \eqref{eq:2dlambdakepsilon2} gives
$$
\overline{\lambda}_{h,\beta}(P_\varepsilon)\le\overline{\lambda}_{h,\beta}(P)+o(\varepsilon).
$$
\end{proof}
\end{lemma}

\begin{remark} 
Let us compare the results of the previous lemmas. In Lemma \ref{lem:2dlambda1} we proved that, after a small cut, the first eigenvalue decreases by a term of the same order as the perimeter. In Lemma \ref{lem:2dlambdak}, we proved that a small cut could increase $\lambda_{h,\beta}$ ($h\ge 2$) at most by a term infinitesimal of higher order than the perimeter. In other words, the possible increase of $\lambda_{h,\beta}$ ($h\ge 2$) is infinitesimal of higher order than the decrease of $\lambda_{1,\beta}$.
\end{remark}

In the following proposition we prove that the number of sides of possible optimal generalized polygons is maximal. 

\begin{proposition}\label{lem:sidenumber}
Let $F:\R^k\to\R$ satisfy the hypotheses of Problems \eqref{eq:poly} and \eqref{eq:polyper}. For every $P\in\overline{\mathcal{P}_{N}}$, there exists a generalized polygon $P'\in\overline{\mathcal{P}_{N+1}}$ such that $|P'|\le |P|$, $\widetilde{Per}(P')\le \widetilde{Per}(P)$ and
$$
F(\overline{\lambda}_{1,\beta}(P'),\ldots,\overline{\lambda}_{k,\beta}(P'))<F(\overline{\lambda}_{1,\beta}(P),\ldots,\overline{\lambda}_{k,\beta}(P)).
$$
In particular, no $P\in\overline{\mathcal{P}_{N}}$ can be a minimizer for \eqref{eq:poly} or \eqref{eq:polyper} in $\overline{\mathcal{P}_{N+1}}$.
\end{proposition}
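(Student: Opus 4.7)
The strategy is to apply the cutting construction of \eqref{eq:2depsilonsets} at a convex corner of $P$, producing $P_\varepsilon:=P\setminus m_\varepsilon$, and to show that for $\varepsilon>0$ small enough $P':=P_\varepsilon$ meets all the required conditions. A cuttable convex corner is always available: by Remark \ref{pro:compoly}(ii) each connected component of $P$ is a simply connected generalized polygon whose outer boundary is a polygonal simple closed curve, and therefore has at least one convex vertex (interior-angle argument). The preamble to Lemma \ref{lem:2dlambda1} handles both the case in which the two sides meeting at such a vertex are single-multiplicity and the case in which one of them is a double-multiplicity slit. Since the cut merely shortens the two sides adjacent to $x_0$ and adds the single-multiplicity segment $\sigma_\varepsilon$, the number of sides of $P_\varepsilon$ counted with multiplicity exceeds that of $P$ by exactly one, giving $P_\varepsilon\in\overline{\mathcal{P}_{N+1}}$.

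Both geometric constraints are immediate. Since $P_\varepsilon\subsetneq P$, $|P_\varepsilon|=|P|-|m_\varepsilon|<|P|$. For the generalized perimeter, the cut trades $s_\varepsilon$ for the single-multiplicity $\sigma_\varepsilon$, so whether $s_\varepsilon$ was originally of multiplicity one or two, the change in $\widetilde{Per}$ equals $\mathcal{H}^1(\sigma_\varepsilon)-\mathcal{H}^1(s_\varepsilon)=(1-C_1)\mathcal{H}^1(\sigma_\varepsilon)<0$ by \eqref{eq:trieps}, hence $\widetilde{Per}(P_\varepsilon)<\widetilde{Per}(P)$.

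For the spectral inequality, Lemmas \ref{lem:2dlambda1} and \ref{lem:2dlambdak} yield, for $\varepsilon$ small,
$$
\overline{\lambda}_{1,\beta}(P_\varepsilon)\le\overline{\lambda}_{1,\beta}(P)-C\varepsilon,\qquad \overline{\lambda}_{h,\beta}(P_\varepsilon)\le\overline{\lambda}_{h,\beta}(P)+o(\varepsilon),\ h=2,\ldots,k.
$$
Since $F$ is nondecreasing in each variable, a first-order Taylor expansion of $F\in C^1$ at $(\overline{\lambda}_{1,\beta}(P),\ldots,\overline{\lambda}_{k,\beta}(P))$ gives
$$
F(\overline{\lambda}_{1,\beta}(P_\varepsilon),\ldots,\overline{\lambda}_{k,\beta}(P_\varepsilon))\le F(\overline{\lambda}_{1,\beta}(P),\ldots,\overline{\lambda}_{k,\beta}(P))-C\,\partial_1 F\cdot\varepsilon+o(\varepsilon),
$$
which is strictly less than $F(\overline{\lambda}_{1,\beta}(P),\ldots,\overline{\lambda}_{k,\beta}(P))$ for all $\varepsilon>0$ sufficiently small, since $\partial_1 F>0$. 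The ``in particular'' clause follows at once, because any candidate minimizer in $\overline{\mathcal{P}_{N+1}}$ lying in $\overline{\mathcal{P}_N}$ would be strictly beaten by its own cut $P_\varepsilon$.

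The essential technical content is already packed into the two cutting lemmas: their crucial asymmetry, a linear gain $-C\varepsilon$ on $\overline{\lambda}_{1,\beta}$ against only an $o(\varepsilon)$ perturbation of the higher eigenvalues, is exactly what the hypothesis $\partial_1 F>0$ is designed to exploit. The residual difficulty in the argument above is combinatorial: verifying the existence of a convex corner suitable for the cut uniformly across the configurations allowed by Definition \ref{def:genpol}, and carefully bookkeeping the multiplicities of $s_\varepsilon$ and $\sigma_\varepsilon$ so that $|P|$, $\widetilde{Per}(P)$ and the side-count are all controlled simultaneously.
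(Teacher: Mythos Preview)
Your proof is correct and follows essentially the same route as the paper: cut at a convex corner via the construction \eqref{eq:2depsilonsets}, invoke Lemmas \ref{lem:2dlambda1} and \ref{lem:2dlambdak}, and conclude by a first-order Taylor expansion exploiting $\partial_1 F>0$. You are slightly more explicit than the paper in two places---the existence of a cuttable convex corner and the verification that $\widetilde{Per}(P_\varepsilon)<\widetilde{Per}(P)$ via \eqref{eq:trieps}---but these are details the paper leaves implicit rather than genuine differences in strategy.
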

\begin{proof}
The proof is based on an argument similar to that in the proof of \cite[Theorem 5.3]{citoconvex}.
Consider $P\in\overline{\mathcal{P}_N}$. For every $\varepsilon>0$ sufficiently small, the polygon $P_\varepsilon$ defined in \eqref{eq:2depsilonsets} has $N+1$ sides; moreover, Lemma \ref{lem:2dlambda1} and Lemma \ref{lem:2dlambdak} imply that
\begin{equation}\label{diftaglio}
\overline{\lambda}_{1,\beta}(P_\varepsilon)\le\overline{\lambda}_{1,\beta}(P)-C\varepsilon\quad\text{and}\quad \overline{\lambda}_{k,\beta}(P_\varepsilon)\le \overline{\lambda}_{k,\beta}(P)+ o(\varepsilon).
\end{equation}
The hypotheses on $F$ lead to the first assertion, once we set $P':=P_\varepsilon\in\overline{\mathcal{P}_{N+1}}$ for a suitable value of $\varepsilon>0$. Indeed, a Taylor expansion gives
\begin{equation}\label{eq:splittaglio}
\begin{split}
F(\overline\lambda_{1,\beta}(P_\varepsilon),\ldots,\overline\lambda_{k,\beta}(P_\varepsilon))
=&F(\overline\lambda_{1,\beta}(P),\ldots,\overline\lambda_{k,\beta}(P))\\
&+\sum_{h=1}^k\frac{\partial F}{\partial x_h}(\overline\lambda_{1,\beta}(P),\ldots,\overline\lambda_{k,\beta}(P))\cdot(\lambda_{h,\beta}(P_\varepsilon)-\overline\lambda_{h,\beta}(P))\\
&+o\left(\left|\left(\lambda_{h,\beta}(P_\varepsilon)-\overline\lambda_{h,\beta}(P)\right)_{h=1,\ldots,k}\right|\right).
\end{split}
\end{equation}
Now, using \eqref{diftaglio} yields
\begin{align*}
\left|\left(\lambda_{h,\beta}(P_\varepsilon)-\overline\lambda_{h,\beta}(P)\right)_{h=1,\ldots,k}\right|&=\sqrt{\left(\lambda_{1,\beta}(P_\varepsilon)-\overline\lambda_{1,\beta}(P)\right)^2+\ldots+\left(\lambda_{k,\beta}(P_\varepsilon)-\overline\lambda_{k,\beta}(P)\right)^2}\\
&=\sqrt{\varepsilon^2+o(\varepsilon)^2}=\varepsilon+o(\varepsilon)
\end{align*}
Plugging the latter in \eqref{eq:splittaglio} and using again \eqref{diftaglio} on each term in the sum at the third line we finally get
\begin{align*}
F(\overline\lambda_{1,\beta}&(P_\varepsilon),\ldots,\overline\lambda_{k,\beta}(P_\varepsilon))  \\
&\le F(\overline\lambda_{1,\beta}(P),\ldots,\overline\lambda_{k,\beta}(P))-\frac{\partial F}{\partial x_1}(\overline\lambda_{1,\beta}(P),\ldots,\overline\lambda_{k,\beta}(P))\cdot(C\varepsilon)+o(\varepsilon)\\
&=F(\overline\lambda_{1,\beta}(P),\ldots,\overline\lambda_{k,\beta}(P))-C'\varepsilon+o(\varepsilon)<F(\overline\lambda_{1,\beta}(P),\ldots,\overline\lambda_{k,\beta}(P)).
\end{align*}
In particular, if we consider a generalized polygon $P\in\overline{\mathcal{P}_{N}}\subset\overline{\mathcal{P}_{N+1}}$, the corresponding generalized polygon $P'$ (built as above) gives us a strictly lower value for Problem \eqref{eq:poly} in $\overline{\mathcal{P}_{N+1}}$, then $P$ cannot be a minimizer in $\overline{\mathcal{P}_{N+1}}$.
\end{proof}

In order to prove
Main Theorem \ref{mtheorem:main1}, we focus for the moment only on Problem \eqref{eq:poly1}, since the generalized perimeter constraint in Problem \eqref{eq:polyper} provides extra compactness. It seems that removing the bounded design region hypothesis in the measure constrained case of Problem \eqref{eq:poly} is not possible, since the detection of two disjoint polygonal components in the dichotomy case of the concentration-compactness argument similar to those in \cite{bucgia19,citogiaco}, could create extra sides more than $N$.
We first prove the existence of an optimal polygon for $\overline{\lambda}_{1,\beta}$.
\begin{proposition}
\label{lem:lambda1}
    Problem \eqref{eq:poly1} admits a solution. Any solution is a connected polygon with exactly $N$ sides. 
\end{proposition}

\begin{proof}
Let us consider a minimizing sequence $(P_n)_n\subset\overline{\mathcal{P}_{N}}$ for Problem \eqref{eq:poly1}. Without loss of generality we can assume $|P_n|=m$ and $\overline{\lambda}_{1,\beta}(P_n)<\Lambda$ for some $\Lambda>0$. Since $\R^2\setminus P_n$ has a uniformly bounded number of connected components, we deduce from Remark \ref{pro:compoly}(i) that there exists $P\in\overline{\mathcal{P}_N}$ such that
$$
P_n\xrightarrow{H^c_{loc}} P
$$
and locally in measure as well.

We claim that $|P|=m$. In order to do this, we apply a concentration-compactness argument to the sequence $(\chi_{P_n})_{n\in\N}$, where $\chi_{P_n}$ is the indicator function of $P_n$. For every $r>0$ let us consider the monotone increasing functions $\alpha_n:[0,+\infty[\to [0,+\infty[$
\begin{equation*}
\alpha_n(r):=\sup_{y \in \R^N}|P_n \cap Q_r(y)|,
\end{equation*}
where $Q_r(y)$ is the square centered at $y$ with side $r$. Up to a subsequence, in view of Helly's selection  theorem, we may assume that
\begin{equation*}
\alpha_n \to \alpha
\qquad\text{pointwise on }[0,+\infty[
\end{equation*}
for a suitable monotone increasing function $\alpha:[0,+\infty[\to [0,+\infty[$. 
Only the three following situations may occur.
\begin{itemize}
\item[(a)] {\it Vanishing}: $\lim_{r\to+\infty}\alpha(r)=0$;
\item[(b)] {\it Dichotomy}: $\lim_{r\to+\infty}\alpha(r)=\bar \alpha \in ]0,m[$;
\item[(c)] {\it Compactness}: $\lim_{r\to+\infty}\alpha(r)=m$.
\end{itemize}
Let us show that, for our problem, only situation $(c)$ can occur. 
\vskip10pt\noindent{\bf Step 1: Vanishing cannot occur.} Indeed, if it were the case, one would have for every $r>0$
\begin{equation}
\label{eq:starvanish}
\sup_{y\in\R^N}\left|P_n\cap Q_r(y)\right|\to 0
\end{equation}
as $n\to+\infty$.
Let $u_n$ be a $L^2$-normalized eigenfunction of $P_n$. The uniform estimate
$$
\int_{P_n}|\nabla u_n|^2\:dx+\beta\int_{\partial P_n}\left[(u_n^-)^2+(u_n^+)^2\right]\:d\mathcal{H}^1\le \Lambda,
$$
induces the following
$$
\int_{P_n}|\nabla u_n|^2\:dx+\int_{\partial P_n}\left[(u_n^-)^2+(u_n^+)^2\right]\:d\mathcal{H}^1\le\Lambda\left(1+\frac1\beta\right).
$$
So, in view of Lemma \ref{lem:conc}, there exists $y\in\R^2$ such that
$$
|P_n\cap Q_1(y)|\ge C \quad \forall n\in\N
$$
for some uniform constant $C>0$, in contradiction with \eqref{eq:starvanish}.
\vskip10pt\noindent{\bf Step 2: Dichotomy cannot occur.} 
Let us assume that dichotomy occurs. Then there exists $\tilde\alpha\in]0,m[$ such that the following assertion holds true: we can find $x_n\in\R^2$ and $0<r_n<R_n$, $R_n-r_n\to+\infty$, such that we have
$$
\left||P_n\cap Q_{r_n}(x_n)|-\tilde\alpha\right|\to 0,\qquad \left||P_n\setminus Q_{R_n}(x_n)|-(m-\tilde\alpha)\right|\to 0,
$$
with
$$
\mathcal{H}^1(P_n\cap\partial Q_{r_n}(x_n))\to 0,\qquad \mathcal{H}^1(P_n\cap\partial Q_{R_n}(x_n))\to 0.
$$
Now, let us define $P_{n,1}$ and $P_{n,2}$ as follows. If the "transition region" $P_n\cap(Q_{R_n}(x_n)\setminus \overline{Q_{r_n}}(x_n))$ is empty, we set $P_{n,1}:=P_n\cap Q_{r_n}(x_n)$ and $P_{n,2}:=P_n\setminus Q_{R_n}(x_n)$. If not, let us consider the connected components $A_1,\ldots, A_k$ of the open set $P_n\cap(Q_{R_n}(x_n)\setminus \overline{Q_{r_n}}(x_n))$. Without loss of generality, we can assume that $\partial A_j\cap\partial Q_{r_n}(x_n))$ and $\partial A_j\cap\partial Q_{R_n}(x_n))$ consist of only one segment each.
We now remove "channels" that connect plain parts as in Figure \ref{fig:dic} below.

\begin{figure}[h!]
\begin{tikzpicture}[>=>>>]
\label{fig:dic}
\fill[purple!60!] (-2,0) .. controls (-2,0) and (-3,1) ..
(-3,1) .. controls (-3,1) and (-3,3) ..
(-3,3) .. controls (-3,3) and (0,4) .. 
(0,4) .. controls (0,4) and (8,4) .. 
(8,4) .. controls (8,4) and (0.5,3.75) ..
(0.5,3.75).. controls (0.5,3.75) and (0.5,3) ..
(0.5,3) .. controls (0.5,3) and (7.5,2.9) .. 
(7.5,2.9) .. controls (7.5,2.9) and (8.5,4) .. 
(8.5,4) .. controls (8.5,4) and (10,3.5) .. 
(10,3.5) .. controls (10,3.5) and (10,3) .. 
(10,3) .. controls (10,3) and (9,2) ..
(9,2) .. controls (9,2) and (8,2) ..
(8,2) .. controls (8,2) and (7,2.75) ..
(7,2.75) .. controls (7,2.75) and (1,2.9) ..
(1,2.9) .. controls (1,2.9) and (.6,2) ..
(.6,2) .. controls (.6,2) and (2.5,1) ..
(2.5,1) .. controls (2.5,1) and (4,.75) ..
(4,.75) .. controls (4,.75) and (8,1.5) ..
(8,1.5) .. controls (8,1.5) and (2,2) ..
(2,2) .. controls (2,2) and (8,1.8) ..
(8,1.8) .. controls (8,1.8) and (9,1.5) ..
(9,1.5) .. controls (9,1.5) and (10.5,3) ..
(10.5,3) .. controls (10.5,3) and (10.5,4) ..
(10.5,4) .. controls (10.5,4) and (12,4) ..
(12,4) .. controls (12,4) and (12,0) ..
(12,0) .. controls (12,0) and (8,0) ..
(8,0) .. controls (8,0) and (7,1) ..
(7,1) .. controls (7,1) and (3,0.5) ..
(3,0.5) .. controls (3,0.5)  and (1,1.5) ..
(1,1.5) .. controls (1,1.5) and (0,0) ..
(0,0) .. controls (0,0) and (-2,0) ..
(-2,0);
\fill[white]
(1,2) .. controls (1,2) and (7,1.7) .. 
(7,1.5) .. controls (7,1.5) and (7,0) .. 
(7,0) .. controls (7,0) and (1,0) ..
(1,0) .. controls (1,0) and (1,2) ..
(1,2);
\fill[white]
(1,3) .. controls (1,3) and (7,3) .. 
(7,3) .. controls (7,3) and (7,2.5) .. 
(7,2.5) .. controls (7,2.5) and (1,2.5) ..
(1,2.5) .. controls (1,2.5) and (1,3) ..
(1,3);
\draw[line width=1.1pt]  
(-2,0) .. controls (-2,0) and (-3,1) ..
(-3,1) .. controls (-3,1) and (-3,3) ..
(-3,3) .. controls (-3,3) and (0,4) .. 
(0,4) .. controls (0,4) and (8,4) .. 
(8,4) .. controls (8,4) and (0.5,3.75) ..
(0.5,3.75).. controls (0.5,3.75) and (0.5,3) ..
(0.5,3) .. controls (0.5,3) and (7.5,2.9) .. 
(7.5,2.9) .. controls (7.5,2.9) and (8.5,4) .. 
(8.5,4) .. controls (8.5,4) and (10,3.5) .. 
(10,3.5) .. controls (10,3.5) and (10,3) .. 
(10,3) .. controls (10,3) and (9,2) ..
(9,2) .. controls (9,2) and (8,2) ..
(8,2) .. controls (8,2) and (7,2.75) ..
(7,2.75) .. controls (7,2.75) and (1,2.9) ..
(1,2.9) .. controls (1,2.9) and (.6,2) ..
(.6,2) .. controls (.6,2) and (2.5,1) ..
(2.5,1) .. controls (2.5,1) and (4,.75) ..
(4,.75) .. controls (4,.75) and (8,1.5) ..
(8,1.5) .. controls (8,1.5) and (2,2) ..
(2,2) .. controls (2,2) and (8,1.8) ..
(8,1.8) .. controls (8,1.8) and (9,1.5) ..
(9,1.5) .. controls (9,1.5) and (10.5,3) ..
(10.5,3) .. controls (10.5,3) and (10.5,4) ..
(10.5,4) .. controls (10.5,4) and (12,4) ..
(12,4) .. controls (12,4) and (12,0) ..
(12,0) .. controls (12,0) and (8,0) ..
(8,0) .. controls (8,0) and (7,1) ..
(7,1) .. controls (7,1) and (3,0.5) ..
(3,0.5) .. controls (3,0.5)  and (1,1.5) ..
(1,1.5) .. controls (1,1.5) and (0,0) ..
(0,0) .. controls (0,0) and (-2,0) ..
(-2,0);
\draw[line width=1.1pt]  (-1,0) .. controls (-1,0) and (0,2) ..(0,2) .. controls (0,2) and (-1,2.5) ..(-1,2.5);
\draw[line width=1.1pt]   (6.5,1.1) .. controls (6.5,1.1) and (8.5,1) .. (8.5,1) .. controls (8.5,1) and (9,0.5) ..(9,0.5) .. controls (9,0.5) and (10,2.5) ..(10,2.5);
\draw[line width=.7pt,dashed] (1,-.5) .. controls (1,-.5) and (1,5) ..(1,5);
\draw[line width=.7pt,dashed] (7,-.5) .. controls (7,-.5) and (7,5) ..(7,5);
\draw (.5,4.3) node[anchor=south] {$Q_{r_n}$};
\draw (6.5,4.3) node[anchor=south] {$Q_{R_n}$};
\draw (-2,3.7) node[anchor=south] {$P_{n,1}$};
\draw (10,3.7) node[anchor=south] {$P_{n,2}$};
\draw[->,dashed] (-2,3.7) -- (-1.5,2.6);
\draw[->,dashed] (10,3.7) -- (10.5,2.6);
\draw[->,dashed] (10,3.7) -- (9.5,2.8);
\end{tikzpicture}
\caption{$P_n$ is the generalized polygon with the black contour; $P_{n,1}$ and $P_{n,2}$ are detected in color as in the picture.}
\end{figure}

More precisely, we take a connected component $A_j$ and analyze the adjacent connected polygons: if both have measure tending to a positive number, then we remove $A_j$, otherwise we keep it in. Let us assume that such removed channels are $A_1,\ldots, A_l$. We obtain a disconnected polygon
$$D_n:=P_n\setminus \bigcup_{j=1}^l \overline{A_j}.$$
By construction, for every connected component $D_{j,n}$ of $D_n$ it holds
\begin{equation}\label{eq:pn1}
|D_{j,n}\cap Q_{r_n}(x_n)|\to c_j>0\ \Rightarrow\ |D_{j,n}\setminus Q_{R_n}(x_n)|\to0    
\end{equation}
and
\begin{equation}\label{eq:pn2}
|D_{j,n}\setminus Q_{R_n}(x_n)|\to c_j>0\ \Rightarrow\ |D_{j,n}\cap Q_{r_n}(x_n)|\to0.    
\end{equation}
We denote by $P_{n,1}$ and $P_{n,2}$ respectively the union of the connected components of $D_n$ satisfying \eqref{eq:pn1} and \eqref{eq:pn2}.

Notice that both $P_{n,1}$ and $P_{n,2}$ belong to $\overline{\mathcal{P}_N}$. Indeed, for instance, $P_{n,1}$ is obtained removing from $P_n$ the $l$ channels and the $l$ adjacent connected polygons associated with $P_{n,2}$; each discarded component has uniformly nondegenerate measure. At each step, we remove the extra sides of the connected component, that has at least three sides; notice that at least two of these sides lie outside $Q_{r_n}(x_n)$ (one side can be a long segment starting from $Q_{r_n}(x_n)$ and so already counted in the number of sides of $P_{n,1}$). On the other hand, at each step we add at most one straight cutting segment (cap) on the boundary of $Q_{r_n}(x_n)$. So the total number of sides of $P_{n,1}$ is strictly lower than $N$; the same holds for $P_{n,2}$.

Moreover, both polygons have positive measure. In particular $|P_{n,1}|\to\tilde{\alpha}$ and $|P_{n,2}|\to m-\tilde{\alpha}$. Furthermore
$$\mathcal{H}^1\left(\partial P_{n,1}\cap\partial Q_{R_n}(x_n)\right)\to 0,\quad \mathcal{H}^1\left(\partial P_{n,2}\cap\partial Q_{R_n}(x_n)\right)\to 0.$$
Let $u_n$ be a $L^2$-normalized eigenfunction of $P_n$. By exploiting the elementary algebraic property of positive fractions 
$$\frac{a+b}{c+d}\ge\min\left\{\frac{a}{c},\frac{b}{d}\right\},$$
we can bound the Rayleigh quotient from below as follows (possibly switching $P_{n,1}$ and $P_{n,2}$):
\allowdisplaybreaks
\begin{equation}\label{eq:dicotomialungo}
\begin{split}
\overline{\lambda}&_{1,\beta}(P_n)=\frac{\displaystyle\int_{P_n}|\nabla u_n|^2\:dx+\beta\int_{\partial P_n}\left[(u_n^-)^2+(u_n^+)^2\right]\:d\mathcal{H}^1}{\displaystyle\int_{P_n} u_n^2\:dx}
\\
=&\left(\int_{P_{n,1}} u_n^2\:dx+\int_{P_{n,2}} u_n^2\:dx+\int_{P_n\setminus(P_{n,1}\cup P_{n,2})}u_n^2\:dx\right)^{-1}\\
&\cdot\left(\displaystyle\int_{P_{n,1}}|\nabla u_n|^2\:dx+\int_{P_{n,2}}|\nabla u_n|^2\:dx+\int_{P_n\setminus(P_{n,1}\cup P_{n,2})}|\nabla u_n|^2\:dx+\beta\int_{\partial P_{n,1}}\left[(u_n^-)^2+(u_n^+)^2\right]\:d\mathcal{H}^1\right.
\\
&+\beta\int_{\partial P_{n,2}}\left[(u_n^-)^2+(u_n^+)^2\right]\:d\mathcal{H}^1+\beta\int_{\partial P_n\setminus(P_{n,1}\cup P_{n,2})}\left[(u_n^-)^2+(u_n^+)^2\right]\:d\mathcal{H}^1
\\
&\left.-\beta\int_{\partial P_{n,1}\cap\partial Q_{r_n}(x_n)}\left[(u_n^-)^2+(u_n^+)^2\right]\:d\mathcal{H}^1-\beta\int_{\partial P_{n,2}\cap\partial Q_{R_n}(x_n)}\left[(u_n^-)^2+(u_n^+)^2\right]\:d\mathcal{H}^1\right)
\\
\ge&\left(\int_{P_{n,1}} u_n^2\:dx+\int_{P_{n,2}} u_n^2\:dx+\varepsilon_n\right)^{-1}\\
&\cdot\left(\int_{P_{n,1}}|\nabla u_n|^2\:dx+\beta\int_{\partial P_{n,1}}\left[(u_n^-)^2+(u_n^+)^2\right]\:d\mathcal{H}^1\right.\\
&\left.+\int_{P_{n,2}}|\nabla u_n|^2\:dx+\beta\int_{\partial P_{n,2}}\left[(u_n^-)^2+(u_n^+)^2\right]\:d\mathcal{H}^1-\delta_n\right)
\\
\ge&\left(\frac{\displaystyle\int_{P_{n,1}}|\nabla u_n|^2\:dx+\beta\int_{\partial P_{n,1}}\left[(u_n^-)^2+(u_n^+)^2\right]\:d\mathcal{H}^1}{\displaystyle\int_{P_{n,1}} u_n^2\:dx+\int_{P_{n,2}} u_n^2\:dx}\right.
\\
&+\left.\frac{\displaystyle\int_{P_{n,2}}|\nabla u_n|^2\:dx+\beta\int_{\partial P_{n,2}}\left[(u_n^-)^2+(u_n^+)^2\right]\:d\mathcal{H}^1}{\displaystyle\int_{P_{n,1}} u_n^2\:dx+\int_{P_{n,2}} u_n^2\:dx}-\delta_n\right)\cdot(1-\varepsilon_n)
\\
\ge&\left(\min\left\{\overline{R}_{P_{n,1},\beta}(u_n),\overline{R}_{P_{n,2},\beta}(u_n)\right\}-\delta_n\right)\cdot(1-\varepsilon_n)\ge\left(\min\left\{\overline{\lambda}_{1,\beta}(P_{n,1}),\overline{\lambda}_{1,\beta}(P_{n,2})\right\}-\delta_n\right)\cdot(1-\varepsilon_n)\\
=&\left(\overline{\lambda}_{1,\beta}(P_{n,1})-\delta_n\right)\cdot(1-\varepsilon_n)
\end{split}
\end{equation}
with $\varepsilon_n,\delta_n\to0$. Now, let us consider the polygons
$$
\tilde{P}_{n,1}:=\sqrt{\frac{m}{|P_{n,1}|}} P_{n,1}
$$
whose measure is $m>|P_{n,1}|$. In view of \eqref{eq:scal} we have
\begin{equation*}
    \overline{\lambda}_{1,\beta}(P_{n,1})>\sqrt{\frac{m}{|P_{n,1}|}}\overline{\lambda}_{1,\beta}(\tilde{P}_{n,1}).
\end{equation*}
Plugging in \eqref{eq:dicotomialungo} we get
\begin{equation*}
\overline{\lambda}_{1,\beta}(P_n)>\left(\sqrt{\frac{m}{|P_{n,1}|}}\overline{\lambda}_{1,\beta}(\tilde{P}_{n,1})-\delta_n\right)\cdot(1-\varepsilon_n)\ge\left(\sqrt{\frac{m}{|P_{n,1}|}}\inf_{P\in\overline{\mathcal{P}_N},|P|\le m}\overline{\lambda}_{1,\beta}(P)-\delta_n\right)\cdot(1-\varepsilon_n).
\end{equation*}
Since $P_n$ is a minimizing sequence, passing to the limit as $n\to+\infty$ we get
$$
\inf_{P\in\overline{\mathcal{P}_N},|P|\le m}\overline{\lambda}_{1,\beta}(P)=\lim_{n\to+\infty}\overline{\lambda}_{1,\beta}(P_n)\ge\sqrt{\frac{m}{\tilde{\alpha}}}\inf_{P\in\overline{\mathcal{P}_N},|P|\le m}\overline{\lambda}_{1,\beta}(P),
$$
getting a contradiction since the infimum is positive and $\sqrt{\frac{m}{\tilde{\alpha}}}>1$.

\vskip10pt\noindent{\bf Step 3: Compactness.} Since neither vanishing nor dichotomy occur, then compactness occurs. So, the $H^c_{loc}$-limit generalized polygon $P$ has measure $m$. This implies that $P$ is bounded. Let us fix $R>0$ such that $P\subset Q_R$. Without loss of generality we can choose $R$ large enough in such a way that, up to subsequences
$$
\lim_{n\to+\infty}\mathcal{H}^1\left( P_{n}\cap\partial Q_R\right)=0,\quad \lim_{n\to+\infty}|P_{n}\setminus Q_R|=0.
$$
Let us consider the sequence of open generalized polygons $P_{n,R}:=P_n\cap Q_R$. Notice that, since the number of sides of $P_{n,R}$ is uniformly bounded and each side has maximal length $\sqrt{2}R$ (the diagonal of $Q_R$), it holds
$$
\sup_n\mathcal{H}^1\left(\partial P_{n,R}\right)<+\infty.
$$
Moreover, it holds that
$$
P_{n,R}\overset{H^c}{\longrightarrow} P\quad\text{and}\quad|P_{n,R}|\to|P|.
$$
So, thanks to Theorem \ref{prop:Mosco-domains}, $H^1(P_{n,R})$ converges in the sense of Mosco to $H^1(P)$.
Let us consider the first $L^2(P_n)$-normalized eigenfunction $u_n$ of $P_n$ and denote by $u_{n,R}$ its restriction to $P_{n,R}$ extended outside by zero. In view of the convergence in the sense of Mosco of $H^1(P_{n,R})$ to $H^1(P)$, there exists $u\in H^1(P)$ such that
$$u_{n,R}\to u\quad\text{strongly in $L^2(\R^2)$},$$
$$\chi_{P_n}\nabla u_{n,R}\rightharpoonup \chi_P\nabla u\quad\text{weakly in $L^2(\R^2;\R^2)$}$$
and so
\begin{equation}\label{eq:lsccutvolumi}
\int_Pu^2\:dx=\lim_{n\to+\infty}\int_{P_{n,R}}u_{n,R}^2\:dx,\quad \int_P|\nabla u|^2\:dx\le\liminf_{n\to+\infty}\int_{P_{n,R}}|\nabla u_{n,R}|^2\:dx.
\end{equation}
We recall that, in view of Theorem \ref{th:boundeigen}, $\|u_n\|_\infty<C$ for some positive constant $C$ independent of $n$ and $R$, so $\|u_{n,R}\|_\infty<C$ as well. We can apply Lemma \ref{teo:lscboundary} in the open set $Q_{R+1}$ with the sequences of compact sets $\partial P_{n,R}$. We consider a subsequence, not relabeled, $u_{n,R}\in H^1(Q_{R+1}\setminus \partial P_{n,R})$ such that its $L^2$-limit is $u$ in view of the Mosco-convergence of the Sobolev spaces $H^1$. It holds
$$\|u_{n,R}\|_{H^1(Q_{R+1}\setminus \partial P_{n,R})} +\int_{\partial P_{n,R}}\left[(u_n^+)^2+(u_n^-)^2\right]\:d\mathcal{H}^1<C$$
(the latter boundary integral being bounded thanks to the $L^\infty$ uniform bound) and $\partial P$ is contained in every Hausdorff limit of every converging subsequence of $\partial P_{n,R}$. So, we get that \eqref{eq:lscboundary} holds:
\begin{equation}\label{eq:lsccut}
    \int_{\partial P}\left[(u^+)^2+(u^-)^2\right]\:d\mathcal{H}^1\le\liminf_{n\to+\infty}\int_{\partial P_{n,R}}\left[(u_{n,R}^+)^2+(u_{n,R}^-)^2\right]\:d\mathcal{H}^1.
\end{equation}
Notice that
$$\partial P_{n,R}=(Q_R\cap\partial P_n)\cup(P_n\cap \partial Q_R)\subset \partial P_n\cup(P_n\cap \partial Q_R).$$
We thus get
\begin{align*}
    \overline{\lambda}_{1,\beta}(P)&\le\frac{\int_P|\nabla u|^2\:dx+\beta\int_{\partial P}\left[(u^+)^2+(u^-)^2\right]\:d\mathcal{H}^1}{\int_Pu^2\:dx}\\
    &\le\liminf_{n\to+\infty}\frac{\int_{P_{n,R}}|\nabla u_{n,R}|^2\:dx+\beta\int_{\partial P_{n,R}}\left[(u_{n,R}^+)^2+(u_{n,R}^-)^2\right]\:d\mathcal{H}^1}{\int_{P_{n,R}}u_{n,R}^2\:dx}\\
    &\le\liminf_{n\to+\infty}\frac{\int_{P_{n}}|\nabla u_{n}|^2\:dx+\beta\int_{\partial P_{n}}\left[(u_{n}^+)^2+(u_{n}^-)^2\right]\:d\mathcal{H}^1+\beta\int_{P_n\cap \partial Q_R}\left[(u_{n}^+)^2+(u_{n}^-)^2\right]\:d\mathcal{H}^1}{1-\int_{P_n\setminus Q_{R}}u_{n,R}^2\:dx}\\
    &\le\liminf_{n\to+\infty}\frac{\overline{\lambda}_{1,\beta}(P_n)+2\beta\|u_n\|_\infty^2\mathcal{H}^1\left(P_{n}\cap\partial Q_R\right)}{1-\|u_n\|_\infty^2|P_n\setminus Q_{R}|}=\lim_{n\to+\infty}\overline{\lambda}_{1,\beta}(P_n)=\inf\eqref{eq:poly1}.
\end{align*}
We infer that $P$ is a minimizer for Problem \eqref{eq:poly1}.
\end{proof}

We are now in a position to complete the proof our Main Theorem \ref{mtheorem:main1}.

\begin{proof}
Let us start observing that in the case of problem \eqref{eq:polyper} with perimeter constraint, the generalized perimeter constraint entails a uniform bound on the diameters. So there exists a compact $D$ such that every admissible polygon is contained in $D$. So, the proof follows the same pattern for both problems \eqref{eq:poly} and \eqref{eq:polyper} and we present it only once, for the volume constrained case.

Let us consider a minimizing sequence $(P_n)_n$ for \eqref{eq:poly}. Without loss of generality, we assume that
\begin{equation}\label{eq:upbound}
F(\overline{\lambda}_{1,\beta}(P_n),\ldots,\overline{\lambda}_{k,\beta}(P_n))\le\inf_{P\in\overline{\mathcal{P}_N},|P|\le m}F(\overline{\lambda}_{1,\beta}(P),\ldots,\overline{\lambda}_{k,\beta}(P))+1.
\end{equation}

Now, since $P_n\subset D$, there exists, up to a subsequence, a generalized polygon $P\in\overline{\mathcal{P}_N}$, $P\subset D$, such that $P_n$ $H^c$-converges to $P$.

It is immediate to see that $P\neq \emptyset$, using Lemma \ref{lem:conc} as in the "vanishing cannot occur" case of Lemma \ref{lem:lambda1}. Moreover, we also have that $|P_n|\to |P|$ and $\sup\mathcal{H}^1(\partial P_n)<+\infty$ since $P_n$ has at most $N$ sides of length $\text{diam}(D)$. This entails the convergence in the sense of Mosco of $H^1(P_n)$ to $H^1(P)$, as well.

We now claim that
\begin{equation}\label{eq:semik}
\overline{\lambda}_{h,\beta}(P)\le\liminf_{n\to+\infty}\overline{\lambda}_{h,\beta}(P_n)\quad\forall h=1,\ldots,k.
\end{equation}
To show it, we consider a $L^2(P_n)$-orthonormal set, say $\left\{u^n_1,\ldots,u^n_h\right\}$, where $u^n_j$ is an eigenfunction for $\overline{\lambda}_{j,\beta}(P_n)$, and denote by $V_n$ the $h$-dimensional subspace of $H^1(P_n)$ generated. Since $H^1(P_n)$ converges to $H^1(P)$ in the sense of Mosco we conclude that, up to a further subsequence, for every $j=1,\ldots,h$ there exist $u_j\in H^1(P)$ such that $u_j^n\to u_j$ strongly in $L^2(\R^2)$ and $\chi_{P_n}\nabla u_j^n\rightharpoonup\chi_P\nabla u_j$ weakly in $L^2(\R^2;\R^2)$. Let $V$ be the $h$-dimensional vector space spanned by $\left\{u_1,\ldots,u_h\right\}$ (in view of the $L^2$-convergence we can suppose the $u_j$'s are linearly independent) and let us consider $v:=\sum_{j=1}^h\alpha_j u_j$ such that
$$
\overline{R}_{P,\beta}(v)=\max_{w\in V}\overline{R}_{P,\beta}(w).
$$
Let us consider $v_n:=\sum_{j=1}^h\alpha_j u^n_j\in V_n$ and observe that $v_n\to v$ strongly in $L^2(\R^2)$ and $\chi_{P_n}\nabla v_n\rightharpoonup\chi_P\nabla v$ weakly in $L^2(\R^2;\R^2)$. Thanks to the continuity of the volume integrals at the denominator and to the lower semicontinuity of the gradient integral and of the boundary integral (obtained in the same way as in Lemma \ref{lem:lambda1}), we obtain
\begin{align*}
\overline{\lambda}_{h,\beta}(P)&\le\max_{w\in V}\overline{R}_{P,\beta}(w)=\overline{R}_{P,\beta}(v)\le\liminf_{n\to+\infty}\overline{R}_{P_n,\beta}(v_n)
\\
&\le\liminf_{n\to+\infty}\max_{w\in V_n}\overline{R}_{P_n\beta}(w)=\liminf_{n\to+\infty}\overline{\lambda}_{h,\beta}(P_n),
\end{align*}
i.e. \eqref{eq:semik}. The hypotheses on $F$ immediately imply that $P$ is a minimizer for Problem \eqref{eq:poly}.

Finally, $P$ has exactly $N$ sides and $|P|=m$. Indeed, if it had less than $N$ sides, say $N-K$ sides, we could apply $K$ times Lemma \ref{lem:sidenumber} to $P$ to obtain a polygon $P'$ with exactly $N$ sides, $|P'|\le m$ and
$$F(\overline{\lambda}_{1,\beta}(P'),\ldots,\overline{\lambda}_{k,\beta}(P'))<F(\overline{\lambda}_{1,\beta}(P),\ldots,\overline{\lambda}_{k,\beta}(P)),$$
contradicting the minimality of $P$; the saturation of the volume constraint is assured thanks to the decreasing monotonicity under dilations \eqref{eq:scal}. We also deduce that no minimizer in $\overline{\mathcal{P}_{N}}$ can be a minimizer in $\overline{\mathcal{P}_{N+1}}$ and this implies that the sequence of minima $(m_N)$ is strictly decreasing.

In the same way we get the existence of a minimizer $P$ for Problem \eqref{eq:polyper} that saturates the generalized perimeter constraint and the maximal number of sides $N$.
\end{proof}

\begin{remark}
One can reasonably ask whether Problem \eqref{eq:poly} can be rephrased removing the bounded design region hypothesis. The technical problem is in the dichotomy case of the concentration-compactness argument. Indeed, the cutting argument in Proposition \ref{lem:lambda1} creates two disjoint polygons $P_1, P_2$ with at most $N$ sides each, but their union could exceed $N$ sides. For the first eigenvalue this is not a problem, since only one between $P_1$ and $P_2$ realizes the minimum, the other polygon being discarded. On the other hand, if $k>1$, \eqref{eq:compconn} implies that both disjoint parts should be taken into account (deleting one of them we would contradict the minimality of their union); this would lead to consider a minimal polygon that in general could have more than $N$ sides, so not admissible.
\end{remark}

As a corollary of the results of the section, the following problems
\begin{equation}\label{eq:polyconvex}
\min\left\{F(\lambda_{1,\beta}(P),\ldots,\lambda_{k,\beta}(P)):P\in\mathcal{P}_{N},|P|\le m,\ P\ \text{convex}\right\},
\end{equation}
\begin{equation}\label{eq:polyconvexper}
\min\left\{F(\lambda_{1,\beta}(P),\ldots,\lambda_{k,\beta}(P)):P\in\mathcal{P}_{N},\mathcal{H}^1(\partial P)\le p,\ P\ \text{convex}\right\}
\end{equation}
admit a solution. Notice that, in view of the convexity hypotheses, it is not necessary to consider also degenerate polygons. An existence result is obtainable as a corollary to Theorem \ref{mtheorem:main1}.
\begin{corollary}
Problem \eqref{eq:polyconvex} (respectively \eqref{eq:polyconvexper}) admits a solution with exactly $N$ sides and with maximal measure (respectively with maximal boundary length).
\begin{proof}
The proof is a combination of Theorem \ref{mtheorem:main1} and of the preservation of the convexity constraint under $H^c$-convergence.
\end{proof}
\end{corollary}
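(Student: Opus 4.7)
The plan is to run the proof of Theorem \ref{mtheorem:main1} inside the subclass of convex simple polygons, exploiting the fact that convexity is stable under $H^c$-convergence. First, I would fix a minimizing sequence $(P_n)\subset\mathcal{P}_N$ of convex polygons satisfying the prescribed constraint. Under the perimeter constraint the sequence has uniformly bounded diameter automatically; under the measure constraint one argues exactly as in the proof of Theorem \ref{mtheorem:main1}, via Lemma \ref{lem:conc}, to rule out collapse. Compactness (Proposition \ref{Prop:compactness}) then yields $H^c$-convergence, up to a subsequence, to some open set $P$.

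The key observation is that the $H^c$-limit of convex open sets is convex. Combined with Remark \ref{pro:compoly}, this gives $P\in\overline{\mathcal{P}_N}$ convex. But a convex open set cannot have inner cracks: a side of double multiplicity would have at least one free endpoint protruding into $\overline{P}$, in contradiction with convexity. Hence $P\in\mathcal{P}_N$ is itself a convex simple polygon and no passage to generalized polygons is actually needed. Moreover $|P_n|\to|P|$ (and analogously $\mathcal{H}^1(\partial P_n)\to\mathcal{H}^1(\partial P)$ in the perimeter-constrained case, by the convergence of the boundary curves), so Theorem \ref{prop:Mosco-domains} provides Mosco convergence $H^1(P_n)\to H^1(P)$. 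Combining this with the lower semicontinuity of the boundary integral (Lemma \ref{teo:lscboundary}) and the monotonicity hypotheses on $F$, exactly as in the proof of Theorem \ref{mtheorem:main1}, shows that $P$ is a minimizer in the convex class.

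To obtain the exact side count and the saturation of the constraint, I would revisit the cutting construction of Proposition \ref{lem:sidenumber}. If $P$ had fewer than $N$ sides, pick any vertex $x_0$ of $P$: since $P$ is convex, $x_0$ is a genuine convex corner with both adjacent sides of multiplicity one, so the construction in \eqref{eq:2depsilonsets} reduces to intersecting $P$ with a half-plane cutting off a small triangle near $x_0$. This intersection is convex and has exactly one additional side, hence it remains admissible; Lemmas \ref{lem:2dlambda1} and \ref{lem:2dlambdak} then produce a strictly better competitor, contradicting minimality. Iterating at most $N-3$ times one reaches exactly $N$ sides, and the dilation monotonicity \eqref{eq:scal} forces saturation of the volume constraint (respectively, a standard rescaling forces saturation of the perimeter constraint, since perimeter and eigenvalues scale homogeneously).

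There is no serious obstacle here: the only point requiring a brief geometric verification is that the cut still yields a convex polygon, and this is immediate because the cut is the intersection of a convex polygon with a half-plane. Thus the corollary is a direct specialization of Theorem \ref{mtheorem:main1}, which is exactly why the statement follows at essentially no extra cost.
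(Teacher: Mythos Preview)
Your proposal is correct and follows precisely the route the paper indicates in its one-line proof: rerun the argument of Theorem~\ref{mtheorem:main1} using that convexity is preserved under $H^c$-convergence. You have simply unpacked that sentence, and in doing so you make explicit a point the paper leaves implicit, namely that the corner cut of Proposition~\ref{lem:sidenumber} is an intersection with a half-plane and therefore keeps the competitor convex, so the side-count argument stays inside the admissible class.
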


\medskip

We conclude this section with some remarks about the model problems 
\begin{equation}\label{eq:poly1classic}
\begin{split}
\min\Bigl\{\lambda_{1,\beta}(P):\ 
&P\ \text{union of simple polygons}, 
\\
&P \text{ has at most $N$ sides},|P|\le m\Bigr\},
\end{split}
\end{equation}
\begin{equation}\label{eq:polyclassic}
\begin{split}
\min\Bigl\{F(\lambda_{1,\beta}(P),\ldots,\lambda_{k,\beta}(P)):\ 
&P\ \text{union of simple polygons}, 
\\
&P \text{ has at most $N$ sides},|P|\le m, P\subseteq D\Bigr\}
\end{split}
\end{equation}
and
\begin{equation}\label{eq:polyclassicper}
\begin{split}
\min\Bigl\{F(\lambda_{1,\beta}(P),\ldots,\lambda_{k,\beta}(P)):\ &P\ \text{union of simple polygons}, 
\\
&P \text{ has at most $N$ sides},\mathcal{H}^1(\partial P)\le p,\Bigr\}.
\end{split}
\end{equation}
Indeed, in these case we are not allowed to apply the direct methods of the calculus of variation. That is the reason why we focused our analysis of their generalized versions \eqref{eq:poly1}, \eqref{eq:poly} and \eqref{eq:polyper}, respectively. Nevertheless, an easy approximation argument shows that \eqref{eq:poly1}, \eqref{eq:poly} and \eqref{eq:polyper} can be seen as a sort of relaxation of \eqref{eq:poly1classic}, \eqref{eq:polyclassic} and \eqref{eq:polyclassicper}.
\begin{proposition}
For any $P\in\overline{\mathcal{P}_N}$ admissible for Problem \eqref{eq:poly1} (resp. \eqref{eq:poly} or \eqref{eq:polyper}), there exists a sequence of $(P_n)$ admissible polygons for \eqref{eq:poly1classic} (resp. \eqref{eq:polyclassic} or  \eqref{eq:polyclassicper}) such that  
     $$
   \lim_{n\to+\infty} \lambda_{h,\beta}(P_n)=\overline{\lambda}_{h,\beta}(P) \quad\forall h\in\N,
    $$
    $$
    \lim_{n\to+\infty} F(\lambda_{1,\beta}(P_n),\ldots,\lambda_{k,\beta}(P_n))=F(\overline\lambda_{1,\beta}(P),\ldots,\overline\lambda_{k,\beta}(P)),
    $$
    $$ 
\lim_{n\to+\infty}|P_n|=|P|\quad\text{and}\quad  \lim_{n\to+\infty}\mathcal{H}^1(\partial P_n)=\widetilde{Per}(P).
    $$
In particular the infima of problems  \eqref{eq:poly1classic}, \eqref{eq:polyclassic} and \eqref{eq:polyclassicper} respectively coincide with the minima of problems \eqref{eq:poly1}, \eqref{eq:poly} and \eqref{eq:polyper}.
\end{proposition}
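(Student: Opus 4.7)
The idea is to desingularize each crack of $P$ by slightly pulling apart its two coinciding walls, producing simple polygons $P_n \subset P$ whose classical perimeter recovers the doubled length of the cracks. By Remark \ref{pro:compoly}(ii) I may work on each connected component separately, so assume $P$ is a connected generalized polygon with boundary decomposition $\partial P = \partial^\star P \cup \Gamma$. For each crack segment $s \subset \Gamma$, with inner endpoint $v_s \in \overline{P}$ and outer endpoint on $\partial^\star P$, I construct a thin wedge $W_{n,s} \subset \overline{P}$ with apex at $v_s$, aperture $\theta_n = 1/n$, and base on $\partial^\star P$ near the outer endpoint of $s$; I then set
\[
P_n := P \setminus \bigcup_{s \subset \Gamma} \overline{W_{n,s}}.
\]
A vertex-by-vertex inspection shows that each crack counted with multiplicity two in $\partial P$ is replaced by exactly two wedge walls in $\partial P_n$, so that the number of sides of the simple polygon(s) $P_n$ equals the number of sides of $P$ counted with multiplicity, hence at most $N$.

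The geometric convergences $|P_n| \to |P|$ and $\mathcal{H}^1(\partial P_n) \to \widetilde{Per}(P)$ follow by direct computation: each wedge has area $O(1/n)$ and contributes to $\partial P_n$ two walls of length $\mathcal{H}^1(s) + o(1)$, matching the double counting of $\Gamma$ in $\widetilde{Per}(P)$. The inclusion $P_n \subset P$ together with $|P \setminus P_n| \to 0$ yields $H^c$-convergence $P_n \to P$, and then Theorem \ref{prop:Mosco-domains} gives the Mosco convergence $H^1(P_n) \to H^1(P)$.

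The heart of the argument is the spectral convergence $\lambda_{k,\beta}(P_n) \to \overline{\lambda}_{k,\beta}(P)$ for every $k$, after which continuity of $F$ yields the main claim. For the upper bound, I take an $L^2(P)$-orthonormal system $u_1,\dots,u_k \in H^1(P)$ of eigenfunctions for $\overline{\lambda}_{1,\beta}(P),\dots,\overline{\lambda}_{k,\beta}(P)$, use their restrictions as a test space on $P_n$, and pass to the limit; the delicate point is showing that the boundary contribution on the two walls of each widened wedge converges to $\int_\Gamma [(u^+)^2 + (u^-)^2]\,d\mathcal{H}^1$, recovering the doubled trace term in $\overline{R}_{P,\beta}$. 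For the lower bound, I take $L^2(P_n)$-orthonormal optimal test spaces $V_n$, extract $H^1(P)$-limits of basis elements via Mosco convergence, and use the lower semicontinuity of the boundary integral from Lemma \ref{teo:lscboundary}.

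Finally, the equality of infima follows: the inequality $\inf \eqref{eq:polyclassic} \ge \min \eqref{eq:poly}$ (and analogously for the perimeter version) is immediate since simple polygons with at most $N$ sides form a subclass of $\overline{\mathcal{P}_N}$ on which $\lambda_{k,\beta}$ and $\overline{\lambda}_{k,\beta}$ coincide; the converse follows by applying the approximation to an optimizer granted by Theorem \ref{mtheorem:main1}, together with a harmless dilation $t_n \to 1$ (whose spectral effect is controlled by the scaling in Remark \ref{rem:scaling}) to enforce the constraint strictly if needed. The main technical obstacle lies in the upper bound for $\lambda_{k,\beta}(P_n)$: making rigorous the claim that the boundary trace of a fixed $u \in H^1(P)$ restricted to the shrinking $P_n$ is asymptotically $u^+$ on one wall of each widened wedge and $u^-$ on the other.
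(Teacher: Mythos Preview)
Your crack-opening construction is essentially the paper's, with one difference worth noting. The paper does not excise a symmetric wedge about each crack $[A,B]$; it keeps one copy of $[A,B]$ and slides only the \emph{other} endpoint from $A$ to a nearby point $A_\varepsilon$ on an adjacent side of $\partial^\star P$. The triangle inequality in the triangle $A B A_\varepsilon$ then gives $\mathcal{H}^1(\partial P_\varepsilon)\le\widetilde{Per}(P)$ for every $\varepsilon$, so the approximants are already admissible for the perimeter-constrained problem and no rescaling is needed. Your symmetric wedge makes the two walls slightly longer than the crack, which is why you need the extra dilation; this is harmless but avoidable. The paper also spells out the subcases (one crack endpoint interior to $\overline{P}$, both endpoints on $\partial^\star P$, one segment contained in another, cracks consisting of several consecutive segments) in order to verify that the total side count never increases.

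The substantive divergence is in the spectral step. You split $\lambda_{k,\beta}(P_n)\to\overline{\lambda}_{k,\beta}(P)$ into two one-sided bounds: the lower bound via Mosco convergence and Lemma~\ref{teo:lscboundary} is fine, but the upper bound requires that for a \emph{fixed} $u\in H^1(P)$ the boundary integral of $u|_{P_n}$ along the two moving walls converges to $\int_\Gamma\bigl[(u^+)^2+(u^-)^2\bigr]\,d\mathcal{H}^1$, and you rightly flag this as unproved. The paper bypasses this entirely: once $P_n\xrightarrow{H^c}P$, $|P_n|\to|P|$ and $\mathcal{H}^1(\partial P_n)\to\widetilde{Per}(P)$ are established, it simply invokes the continuity of the (generalized) Robin eigenvalues on cracked domains from \cite[Theorems~6.1 and~6.2]{bugitr22}. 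That reference carries out exactly the trace-along-moving-boundaries analysis you would need to close your gap, so citing it is the clean completion of your argument.
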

\begin{proof}
The proof is based on some simple observations. First of all, it is always possible to "detach the cracks", i.e. to split sides with multiplicity two and obtain two sides of multiplicity one; this new sides have  Hausdorff distance as small as we wish from the original crack. In fact, if the crack consists of only one segment $[A,B]$ of multiplicity two, two situations can occur.
If one vertex, say $B$, belongs to the interior of $\overline{P}$ then it is sufficient to take as a new endpoint a point $A_\varepsilon\in\partial P$ not lying on the same line as $[A,B]$ and having exactly distance $\varepsilon$ from $A$. The new polygon is obtained replacing one of the two versions of $[A,B]$ with $[A_\varepsilon,B]$. Notice that we increase neither the area (since we are taking a subset of $P$) nor the generalized perimeter (since in the triangle of vertices $A,B,A_\varepsilon$ we replace the two sides $[A,B]$ and $[A_\varepsilon,A]$ with the third side $[A_\varepsilon,B]$). If both vertices $A$ and $B$ do not belong to the interior of $\overline{P}$, we can still replace the side where $[A,B]$ lies without increasing either the volume or the generalized perimeter. In fact, if one of the intersecting sides is a subset of the other, it is sufficient to replace the shortest one with a segment $[A_\varepsilon,B_\varepsilon]$ obtained intersecting $P$ with a parallel line to $[A,B]$ at distance $\varepsilon$ from the original segment; otherwise, it is enough to move one vertex, $A$ or $B$, on another side it belongs and away from the intersection, giving a rotation around the fixed vertex that provides the required detachment. All the possible situations are shown in Figure 5.

\begin{figure}[h!]
\label{fig:pepsilon}
\begin{tikzpicture}[>=>>>]
\fill[green!50!gray!] (-2,0) .. controls (-2,0) and (2,0) ..(2,0) .. controls (2,0) and (2,3) ..(2,3) .. controls (2,3) and (0,2.5) .. (0,2.5) .. controls (0,2.5) and (0,2) .. (0,2) .. controls (0,2) and (1,2.1) .. (1,2.1) .. controls (1,2.1) and (1,1.5) .. (1,1.5) .. controls  (1,1.5) and (-0.5,1.5) .. (-0.5,1.5) .. controls (-0.5,1.5) and (-0.5,1) .. (-0.5,1) .. controls (-0.5,1) and (-1,1.5) .. (-1,1.5) .. controls (-1,1.5) and (0,1.5) .. (0,1.5).. controls (0,1.5) and (0,4) .. (0,4) .. controls (0,4) and (-2,0) .. (-2,0);
\draw[line width=.7pt]  (-2,0) .. controls (-2,0) and (2,0) ..(2,0) .. controls (2,0) and (2,3) ..(2,3) .. controls (2,3) and (0,2.5) .. (0,2.5) .. controls (0,2.5) and (0,2) .. (0,2) .. controls (0,2) and (1,2.1) .. (1,2.1) .. controls (1,2.1) and (1,1.5) .. (1,1.5) .. controls  (1,1.5)  and (-0.5,1.5) .. (-0.5,1.5) .. controls (-0.5,1.5) and (-0.5,1) .. (-0.5,1) .. controls (-0.5,1) and (-1,1.5) .. (-1,1.5) .. controls (-1,1.5) and (0,1.5) .. (0,1.5).. controls (0,1.5) and (0,4) .. (0,4) .. controls (0,4) and (-2,0) .. (-2,0);
\draw[line width=.7pt]  (-1,0) .. controls (-1,0) and (0,.5) ..(0,.5);
\draw (-2,2.3) node[anchor=north west] {$P$};
\fill[green!50!gray!20!] (-2+6,0) .. controls (-2+6,0) and (2+6,0) ..(2+6,0) .. controls (2+6,0) and (2+6,3) ..(2+6,3) .. controls (2+6,3) and (0+6,2.5) .. (0+6,2.5) .. controls (0+6,2.5) and (0+6,2) .. (0+6,2) .. controls (0+6,2) and (1+6,2.1) .. (1+6,2.1) .. controls (1+6,2.1) and (1+6,1.5) .. (1+6,1.5) .. controls  (1+6,1.5)
and (-0.5+6,1.5) .. (-0.5+6,1.5) .. controls (-0.5+6,1.5) and (-0.5+6,1) .. (-0.5+6,1) .. controls (-0.5+6,1)
and (-1+6,1.5) .. (-1+6,1.5) .. controls (-1+6,1.5) and (0+6,1.5) .. (0+6,1.5).. controls (0+6,1.5) and (0+6,4) .. (0+6,4) .. controls (0+6,4) and (-2+6,0) .. (-2+6,0);
\fill[green!50!gray!] (-2+6,0) .. controls (-2+6,0) and (-1+6,0) .. (-1+6,0) .. controls (-1+6,0) and (0+6,.5) .. (0+6,.5) .. controls (0+6,.5) and  (-1+6.3,0) .. (-1+6.3,0) .. controls (-1+6.3,0) and (2+6,0) ..(2+6,0) .. controls (2+6,0) and (2+6,3) ..(2+6,3) .. controls (2+6,3) and (0+6.15,2.5375) .. (0+6.15,2.5375) .. controls (0+6.15,2.5375) and (0+6.15,2.015) .. (0+6.15,2.015) .. controls (0+6.15,2.015) and (1+6,2.1) .. (1+6,2.1) .. controls (1+6,2.1) and (1+6,1.5) .. (1+6,1.5) .. controls  (1+6,1.5)  
and (-0.5+6,1.5) .. (-0.5+6,1.5) .. controls (-0.5+6,1.5) and (-0.5+6,1) .. (-0.5+6,1) .. controls (-0.5+6,1)
and (-1+6,1.5) .. (-1+6,1.5) .. controls (-1+6,1.5) and (0+6,1.65) .. (0+6,1.65).. controls (0+6,1.65) and (0+6,4) .. (0+6,4) .. controls (0+6,4) and (-2+6,0) .. (-2+6,0);
\draw[line width=.7pt]  (-2+6,0) .. controls (-2+6,0) and (2+6,0) ..(2+6,0) .. controls (2+6,0) and (2+6,3) ..(2+6,3) .. controls (2+6,3) and (0+6,2.5) .. (0+6,2.5) .. controls (0+6,2.5) and (0+6,2) .. (0+6,2) .. controls (0+6,2) and (1+6,2.1) .. (1+6,2.1) .. controls (1+6,2.1) and (1+6,1.5) .. (1+6,1.5) .. controls  (1+6,1.5)  
and (-0.5+6,1.5) .. (-0.5+6,1.5) .. controls (-0.5+6,1.5) and (-0.5+6,1) .. (-0.5+6,1) .. controls (-0.5+6,1)
and (-1+6,1.5) .. (-1+6,1.5) .. controls (-1+6,1.5) and (0+6,1.5) .. (0+6,1.5).. controls (0+6,1.5) and (0+6,4) .. (0+6,4) .. controls (0+6,4) and (-2+6,0) .. (-2+6,0);
\draw[line width=.7pt]  (-1+6,0) .. controls (-1+6,0) and (0+6,.5) ..(0+6,.5);
\draw[line width=.7pt, dashed]  (-1+6.3,0) .. controls (-1+6.3,0) and (0+6,.5) ..(0+6,.5);
\draw[line width=.7pt, dashed]  (0+6.15,2.5375) .. controls (0+6.15,2.5375) and (0+6.15,2.015) ..(0+6.15,2.015);
\draw[line width=.7pt, dashed]  (-1+6,1.5) .. controls (-1+6,1.5) and (0+6,1.65) .. (0+6,1.65);
\draw (-2+6,2.3) node[anchor=north] {$P\to P_\varepsilon$};
\fill[green!50!gray!] (-2+12,0) .. controls (-2+12,0) and (-1+12,0) .. (-1+12,0) .. controls (-1+12,0) and (0+12,.5) .. (0+12,.5) .. controls (0+12,.5) and  (-1+12.3,0) .. (-1+12.3,0) .. controls (-1+12.3,0) and (2+12,0) ..(2+12,0) .. controls (2+12,0) and (2+12,3) ..(2+12,3) .. controls (2+12,3) and (0+12.15,2.5375) .. (0+12.15,2.5375) .. controls (0+12.15,2.5375) and (0+12.15,2.015) .. (0+12.15,2.015) .. controls (0+12.15,2.015) and (1+12,2.1) .. (1+12,2.1) .. controls (1+12,2.1) and (1+12,1.5) .. (1+12,1.5) .. controls  (1+12,1.5)  
and (-0.5+12,1.5) .. (-0.5+12,1.5) .. controls (-0.5+12,1.5) and (-0.5+12,1) .. (-0.5+12,1) .. controls (-0.5+12,1)
and (-1+12,1.5) .. (-1+12,1.5) .. controls (-1+12,1.5) and (0+12,1.65) .. (0+12,1.65).. controls (0+12,1.65) and (0+12,4) .. (0+12,4) .. controls (0+12,4) and (-2+12,0) .. (-2+12,0);
\draw[line width=.7pt]  (-2+12,0) .. controls (-2+12,0) and (-1+12,0) .. (-1+12,0) .. controls (-1+12,0) and (0+12,.5) .. (0+12,.5) .. controls (0+12,.5) and  (-1+12.3,0) .. (-1+12.3,0) .. controls (-1+12.3,0) and (2+12,0) ..(2+12,0) .. controls (2+12,0) and (2+12,3) ..(2+12,3) .. controls (2+12,3) and (0+12.15,2.5375) .. (0+12.15,2.5375) .. controls (0+12.15,2.5375) and (0+12.15,2.015) .. (0+12.15,2.015) .. controls (0+12.15,2.015) and (1+12,2.1) .. (1+12,2.1) .. controls (1+12,2.1) and (1+12,1.5) .. (1+12,1.5) .. controls  (1+12,1.5) 
and (-0.5+12,1.5) .. (-0.5+12,1.5) .. controls (-0.5+12,1.5) and (-0.5+12,1) .. (-0.5+12,1) .. controls (-0.5+12,1)
and (-1+12,1.5) .. (-1+12,1.5) .. controls (-1+12,1.5) and (0+12,1.65) .. (0+12,1.65).. controls (0+12,1.65) and (0+12,4) .. (0+12,4) .. controls (0+12,4) and (-2+12,0) .. (-2+12,0);
\draw (-2+12,2.3) node[anchor=north west] {$P_\varepsilon$};
\end{tikzpicture}
\caption{Building $P_\varepsilon$ starting from $P$}
\end{figure}
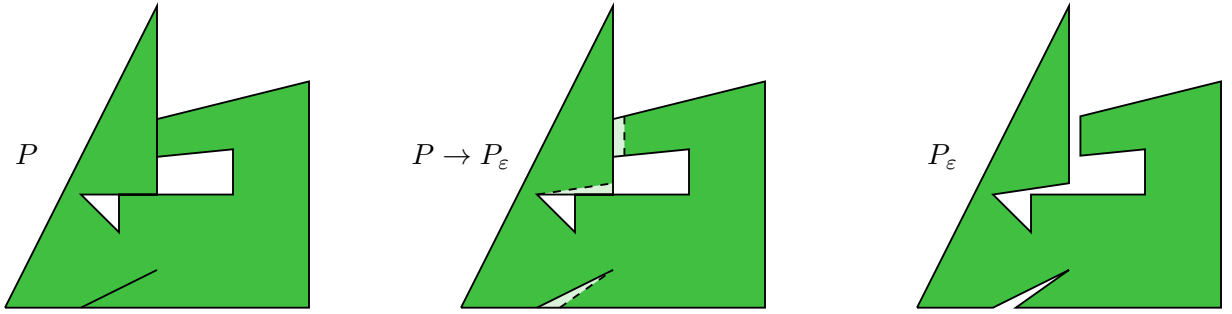

On the other hand, if a subset of $\partial P$ with multiplicity two consists of more consecutive segments, it is enough to apply the previous argument a finite number of times, starting from the external vertices and detaching one segment per time. Notice that in all cases, we do not increase the total number of sides, see Figure 6, because every step of the procedure involves the detachment of sides which had multiplicity \emph{two} creating \emph{two} disjoint sides. 

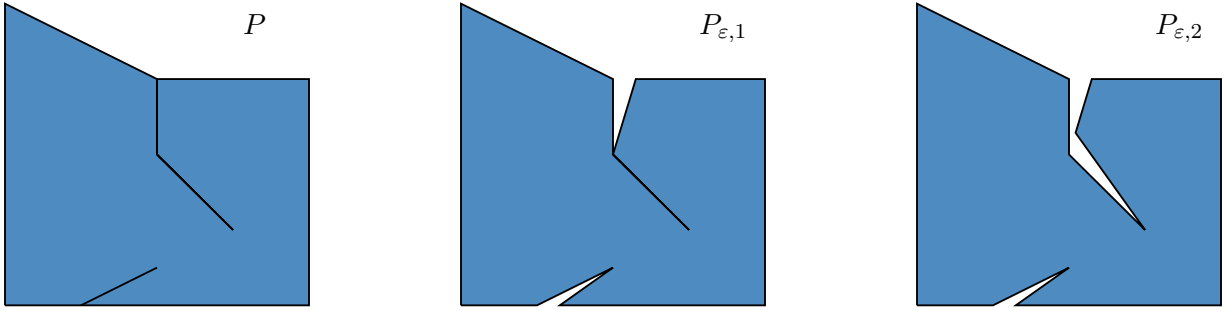
\begin{figure}[h!]
\begin{tikzpicture}[>=>>>]
\fill[blue!65!green!70!] (-2,0) .. controls (-2,0) and (2,0) ..(2,0) .. controls (2,0) and (2,3) ..(2,3) .. controls (2,3) and (0,3) .. (0,3) .. controls (0,3) and (0,2) .. (0,2) .. controls (0,2) and (1,1) .. (1,1) .. controls (1,1) and (0,2) .. (0,2) .. controls  (0,2)  and (0,3) .. (0,3) .. controls (0,3) and (-2,4) .. (-2,4).. controls (-2,4) and (-2,0) ..  (-2,0);
\draw[line width=.7pt] (-2,0) .. controls (-2,0) and (2,0) ..(2,0) .. controls (2,0) and (2,3) ..(2,3) .. controls (2,3) and (0,3) .. (0,3) .. controls (0,3) and (0,2) .. (0,2) .. controls (0,2) and (1,1) .. (1,1) .. controls (1,1) and (0,2) .. (0,2) .. controls  (0,2)  and (0,3) .. (0,3) .. controls (0,3) and (-2,4) .. (-2,4).. controls (-2,4) and (-2,0) ..  (-2,0);
\draw[line width=.7pt]  (-1,0) .. controls (-1,0) and (0,.5) ..(0,.5);
\draw (1,4) node[anchor=north west] {$P$};
\fill[blue!65!green!70!] (-2+6,0) .. controls (-2+6,0) and (-1+6,0) .. (-1+6,0) .. controls (-1+6,0) and (0+6,.5) .. (0+6,.5) .. controls (0+6,.5) and  (-1+6.3,0) .. (-1+6.3,0) .. controls (-1+6.3,0) and (2+6,0) ..(2+6,0) .. controls (2+6,0) and (2+6,3) ..(2+6,3) .. controls (2+6,3) and (0+6.3,3) .. (0+6.3,3) .. controls (0+6.3,3) and (0+6,2) .. (0+6,2) .. controls (0+6,2) and (1+6,1) .. (1+6,1) .. controls (1+6,1) and (0+6,2) .. (0+6,2) .. controls  (0+6,2)  and (0+6,3) .. (0+6,3) .. controls (0+6,3) and (-2+6,4) .. (-2+6,4).. controls (-2+6,4) and (-2+6,0) .. (-2+6,0);
\draw[line width=.7pt]  (-2+6,0) .. controls (-2+6,0) and (-1+6,0) .. (-1+6,0) .. controls (-1+6,0) and (0+6,.5) .. (0+6,.5) .. controls (0+6,.5) and  (-1+6.3,0) .. (-1+6.3,0) .. controls (-1+6.3,0) and (2+6,0) ..(2+6,0) .. controls (2+6,0) and (2+6,3) ..(2+6,3) .. controls (2+6,3) and (0+6.3,3) .. (0+6.3,3) .. controls (0+6.3,3) and (0+6,2) .. (0+6,2) .. controls (0+6,2) and (1+6,1) .. (1+6,1) .. controls (1+6,1) and (0+6,2) .. (0+6,2) .. controls  (0+6,2)  and (0+6,3) .. (0+6,3) .. controls (0+6,3) and (-2+6,4) .. (-2+6,4).. controls (-2+6,4) and (-2+6,0) .. (-2+6,0);
\draw (1+6,4) node[anchor=north west] {$P_{\varepsilon,1}$};
\fill[blue!65!green!70!] (-2+12,0) .. controls (-2+12,0) and (-1+12,0) .. (-1+12,0) .. controls (-1+12,0) and (0+12,.5) .. (0+12,.5) .. controls (0+12,.5) and  (-1+12.3,0) .. (-1+12.3,0) .. controls (-1+12.3,0) and (2+12,0) ..(2+12,0) .. controls (2+12,0) and (2+12,3) ..(2+12,3) .. controls (2+12,3) and (0+12.3,3) .. (0+12.3,3) .. controls (0+12.3,3) and (12.0862,2.2873) .. (12.0862,2.2873) .. controls (12.0862,2.2873) and (1+12,1) .. (1+12,1) .. controls (1+12,1) and (0+12,2) .. (0+12,2) .. controls  (0+12,2)  and (0+12,3) .. (0+12,3) .. controls (0+12,3) and (-2+12,4) .. (-2+12,4).. controls (-2+12,4) and (-2+12,0) .. (-2+12,0);
\draw[line width=.7pt]  (-2+12,0) .. controls (-2+12,0) and (-1+12,0) .. (-1+12,0) .. controls (-1+12,0) and (0+12,.5) .. (0+12,.5) .. controls (0+12,.5) and  (-1+12.3,0) .. (-1+12.3,0) .. controls (-1+12.3,0) and (2+12,0) ..(2+12,0) .. controls (2+12,0) and (2+12,3) ..(2+12,3) .. controls (2+12,3) and (0+12.3,3) .. (0+12.3,3) .. controls (0+12.3,3) and (12.0862,2.2873) .. (12.0862,2.2873) .. controls (12.0862,2.2873) and (1+12,1) .. (1+12,1) .. controls (1+12,1) and (0+12,2) .. (0+12,2) .. controls  (0+12,2)  and (0+12,3) .. (0+12,3) .. controls (0+12,3) and (-2+12,4) .. (-2+12,4).. controls (-2+12,4) and (-2+12,0) .. (-2+12,0);
\draw (1+12,4) node[anchor=north west] {$P_{\varepsilon,2}$};
\end{tikzpicture}
\caption{When a crack of $P$ consists of more consecutive segments, we apply the procedure several times}
\end{figure}

Considering now $\varepsilon=1/n$, we build a sequence of unions of simple polygons with at most $N$ sides such that
$$
P_n\xrightarrow{H^c}P,\quad \lim_{n\to+\infty}|P_n|=|P|\quad\text{and}\quad  \lim_{n\to+\infty}\mathcal{H}^1(\partial P_n)=\widetilde{Per}(P);
$$
The claim is thus an immediate consequence of the continuity of the eigenvalues proved in \cite[Theorem 6.1, Theorem 6.2]{bugitr22}. 
\end{proof}

\section{Negative boundary parameter}\label{sec:negative}

In this section we consider the case $\beta<0$ and set $\eta:=-\beta>0$. We deal with the problem
 \begin{equation}\label{eq:negativeRobin}
        \left\{\begin{array}{ll}
        \Delta u + \lambda u = 0 \qquad &\text{in\ \ } \Omega
        \\
        \displaystyle   \frac{\partial u}{\partial\nu}-\eta u=0 & \text{on\ \ }  \partial\Omega.
        \end{array}\right.
    \end{equation}
The existence result follows the same scheme as in \cite{bucurcito}. Due to the pathologic behaviour of the eigenvalues of shrinking sets, we are not able to discuss the optimization problem in the same generality as for $\beta>0$.
\begin{remark}\label{rem:constraint}
A reasonable question is why we do not consider the more general problems
\begin{equation}\label{eq:polynegvol}
\max\left\{F(\overline{\lambda}_{1,-\eta}(P),\ldots,\overline{\lambda}_{k,-\eta}(P)): P\in\overline{\mathcal{P}_{N}},\ |P|\le m\right\},
\end{equation}
\begin{equation}\label{eq:polynegperin}
\max\left\{F(\overline{\lambda}_{1,-\eta}(P),\ldots,\overline{\lambda}_{k,-\eta}(P)): P\in\overline{\mathcal{P}_{N}},\ \widetilde{Per}(P)\le p\right\},
\end{equation}
with $F$ satisfying the same hypotheses as in Problem \eqref{eq:polyneg}.
Let us consider a simple polygon $P\in\mathcal{P}_N$ of measure $m$ and let us consider the family $tP$, which is decreasing as $t\downarrow 0$. By \eqref{negativebeta} we have 
$$
\overline{\lambda}_{1,-\eta}(tP)={\lambda}_{1,-\eta}(tP)\le-\eta\frac{t\mathcal{H}^1(\partial P)}{t^2|P|}\le-\eta\frac{ 2\sqrt{\pi}}{t\sqrt{m}}\xrightarrow{t\to 0^+}-\infty.
$$
In particular, we can deduce the asymptotic behaviour of ${\lambda}_{1,-\eta}(tP)$ using \eqref{eq:4.16} and De L'H\^{o}pital's rule; indeed one has
$$
\lim_{t\to 0^+}\frac{\lambda_{1,-\eta}(tP)}{\frac{1}{t}}=\lim_{t\to 0^+}\frac{\frac{1}{t^2}{\lambda}_{1,-t\eta}(P)}{\frac{1}{t}}=\lim_{t\to 0^+}\frac{\lambda_{1,-t\eta}(P)}{t}=\lim_{t\to 0^+}\frac{\frac{d}{dt}\lambda_{1,-t\eta}(P)}{1}=-\eta\frac{\mathcal{H}^1(\partial P)}{|P|}.
$$
So $\lambda_{1,-\eta}(tP)\simeq -\eta\frac{\mathcal{H}^1(\partial P)}{t|P|}$ as $t\to 0^+$.
Now let us consider a nonequilateral triangle $T\in\mathcal{P}_N$ and use \eqref{eq:expansion}:
$$
\lim_{t\to 0^+}\frac{\lambda_{2,-\eta}(tT)}{\frac{1}{t^2}}=\lim_{t\to 0^+}\frac{\frac{1}{t^2}{\lambda}_{2,-t\eta}(T)}{\frac{1}{t^2}}=\lim_{t\to 0^+}\lambda_{2,-t\eta}(T)=\mu_2(T)>0.
$$
So $\lambda_{2,-\eta}(tT)\simeq \frac{\mu_2(T)}{t^2}$ as $t\to 0^+$. 
Taking
$$F(\xi_1,\xi_2):=\xi_1+\xi_2$$
we have that the family $tT$ is admissible for both Problems \eqref{eq:polynegvol} and \eqref{eq:polynegperin} for every $N\ge 3$ and it holds
$$\lim_{t\to 0^+}F(\lambda_{1,-\eta},(tT),\lambda_{2,-\eta}(tT))=\lim_{t\to 0^+}\left(-\eta\frac{\mathcal{H}^1(\partial P)}{t|P|}+\frac{\mu_2(T)}{t^2}\right)=+\infty.$$
So Problems \eqref{eq:polynegvol} and \eqref{eq:polynegperin} do not admit maximum in general.
\end{remark}

We now show that if $\overline{\lambda}_{k,-\eta}(P)$ is not too small then the number of the connected components of $P$ and their diameter are bounded according to the size of $\overline{\lambda}_{k,\beta}(P)$, see \cite[Proposition 14]{bucurcito}. 

\begin{proposition}[A priori bound on diameter and on the number of connected components]\label{pro:isod}
Let $N\in\N$, $P\in\overline{\mathcal{P}_N}$, $|P|=m$ and let $A>0$ be such that $\overline{\lambda}_{k,\beta}(P)>-A$. Then
$P$ is union of $M$ equibounded connected components
$$
P=P_1\cup\ldots\cup P_M,
$$
with $M<\frac{m A^2}{4\pi\eta^2}+k$ and ${\rm diam}(P_j)\le D(m,\beta,k,A)$, i.e., the diameters of the connected components are uniformly bounded.
\end{proposition}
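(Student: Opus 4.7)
My plan is to handle the two claims in succession, each reduced to exhibiting a $k$-dimensional subspace of $H^1(P)$ whose generalized Rayleigh quotient is $\le -A$ and thereby contradicting the hypothesis via the Courant--Fischer formula \eqref{eq:genEigenvalues2}.

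\textbf{Bound on $M$.} Every connected component $P_j$ of $P$ is simply connected by Remark \ref{pro:compoly}(ii); testing the generalized Rayleigh quotient with the constant function and applying the planar isoperimetric inequality to $\partial^\star P_j$ yields
\[
\overline{\lambda}_{1,-\eta}(P_j)\;\le\;-\eta\,\frac{\mathcal{H}^1(\partial^\star P_j)}{|P_j|}\;\le\;-\frac{2\sqrt{\pi}\,\eta}{\sqrt{|P_j|}}.
\]
Set $m_0:=4\pi\eta^2/A^2$. Any component with $|P_j|\le m_0$ thus has $\overline{\lambda}_{1,-\eta}(P_j)\le -A$, and if $k$ or more such ``small'' components existed, taking an $L^2$-normalized first eigenfunction on each and extending by zero would give a $k$-dimensional subspace of $H^1(P)$ with pairwise disjoint supports on which every Rayleigh quotient is a convex combination of the (per-component) values $\overline{\lambda}_{1,-\eta}(P_j)\le -A$, contradicting $\overline{\lambda}_{k,-\eta}(P)>-A$. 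Hence at most $k-1$ components are small; the large ones have $|P_j|>m_0$ with total area $\le m$ and so number at most $mA^2/(4\pi\eta^2)$. Summing gives $M<k+mA^2/(4\pi\eta^2)$.

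\textbf{Bound on $\mathrm{diam}(P_j)$.} Fix a component $P_j$ and assume $\mathrm{diam}(P_j)\ge D$ for a threshold $D$ to be determined. Rotate coordinates so that the $x_1$-projection of $P_j$ is an interval of length $D$. Slicing $P_j$ with the $k+1$ hyperplanes $\{x_1=iD/k\}$, $i=0,\ldots,k$, produces $k$ non-empty pieces $A_1,\ldots,A_k$ (non-emptiness follows from the connectedness of the projection). On each strip I would take a Lipschitz cut-off $\psi_i(x_1)$ supported in $[(i-1)D/k,iD/k]$, equal to $1$ on the central subinterval, with $|\psi_i'|=O(k/D)$. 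The $\psi_i$ have pairwise disjoint supports and span a $k$-dimensional subspace of $H^1(P_j)$. For each $i$ the Rayleigh quotient decomposes into a Dirichlet part $O(k^2|A_i|/D^2)$, an $L^2$ mass of order $|A_i|$, and a negative boundary contribution $-\eta\int_{\partial^\star P_j}\psi_i^2\,d\mathcal{H}^1$ that can be lower bounded (in absolute value) in terms of $|A_i|$ via the planar isoperimetric inequality applied to the piece $A_i$ itself. For $D$ large enough, depending only on $m,\eta,k,A$, each quotient is $\le -A$; extending the $\psi_i$ by zero to $P$ and applying \eqref{eq:genEigenvalues2} again contradicts $\overline{\lambda}_{k,-\eta}(P)>-A$.

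The main obstacle is clearly the diameter step: the cut-off $\psi_i$ unavoidably costs Dirichlet energy, and one must certify that the negative boundary term genuinely dominates, uniformly over the possibly irregular geometry of the slices $A_i$. The delicate ingredient is a lower bound on $\mathcal{H}^1(\partial^\star P_j\cap A_i)$ in terms of $|A_i|$, for which the simply connected polygonal structure of $P_j$ (with at most $N$ sides) and the isoperimetric inequality on each piece are essential. This is precisely the slicing argument of \cite[Proposition 14]{bucurcito} adapted to the generalized-polygon setting, and it produces the explicit diameter bound $D=D(m,\beta,k,A)$ valid simultaneously for all components.
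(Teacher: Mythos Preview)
The paper gives no proof of its own here; it records the statement and defers entirely to Proposition~14 of the cited reference. Your treatment of the bound on $M$ is correct and is exactly the standard argument.

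For the diameter bound your outline has the right skeleton (slicing, disjoint Lipschitz cutoffs, Courant--Fischer on the span), but the step you single out as ``the main obstacle'' is not resolved by the tool you invoke. Applying the isoperimetric inequality to the slice $A_i$ bounds $\mathcal H^1(\partial A_i)$ from below, yet $\partial A_i$ contains, besides $\partial^\star P_j\cap\overline{A_i}$, the two slicing segments $\overline{P_j}\cap\{x_1=(i{-}1)D/k\}$ and $\overline{P_j}\cap\{x_1=iD/k\}$, whose length is uncontrolled; so no lower bound on $\int_{\partial^\star P_j}\psi_i^2$ follows. Even granting a bound of the form $\int_{\partial^\star P_j}\psi_i^2\ge c\sqrt{|A_i|}$, the inequality you would then need,
\[
\bigl(Ck^2D^{-2}+A\bigr)\sqrt{|A_i|}\ \le\ c\,\eta,
\]
does not hold for $D$ large: the left-hand side tends to $A\sqrt{|A_i|}$, not to $0$, and no contradiction is obtained unless $A\le 2\sqrt{\pi}\,\eta/\sqrt{m}$, which is not assumed. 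The slicing argument in the cited reference estimates the boundary term differently: since $P_j$ is connected and simply connected, $\partial^\star P_j$ is a closed polygonal curve whose $x_1$-projection covers each strip, so its length inside (the middle portion of) every strip is at least a fixed multiple of $D/k$. This yields $\int_{\partial^\star P_j}\psi_i^2\ge c\,D/k$, and now the required inequality $\bigl(Ck^2D^{-2}+A\bigr)m\le c\,\eta\,D/k$ does hold for all $D$ exceeding an explicit threshold $D(m,\eta,k,A)$.
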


We are now in a position to prove the main result of the section.

\begin{theorem}[Existence of a maximal generalized polygon]\label{teo:exis}
Problems \eqref{eq:polyneg} and \eqref{eq:polynegper} admit solutions in $\overline{{\mathcal P}_N}$. Each optimal polygon $P$ is bounded and can be written as the union of at most 
$$
\min\left\{\left\lfloor\frac{N-1}{2}\right\rfloor,\frac{mA_*^2}{4\pi\eta^2}+k\right\}
$$
equibounded connected components, where $A_*>0$ is such that
$$
F(\lambda_{1,-\eta}(E_N),\ldots,\lambda_{k,-\eta}(E_N))>F(-A_*,\ldots,-A_*)
$$
with $E_N$ the regular $N$-agon saturating the constraint of the problem.
\end{theorem}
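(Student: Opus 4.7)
The plan is to run the direct method of the calculus of variations in the $H^c$-topology. First I would exploit the regular $N$-agon $E_N$ saturating the geometric constraint (volume or perimeter) as a competitor: by definition of $A_*$ the supremum strictly exceeds $F(-A_*,\ldots,-A_*)$, so the monotonicity of $F$ together with the ordering $\overline\lambda_1\le\cdots\le\overline\lambda_k$ forces $\overline\lambda_{k,-\eta}(P_n)>-A_*$ along any maximizing sequence $(P_n)$ for $n$ large (otherwise $F(\overline\lambda_{1,-\eta}(P_n),\ldots,\overline\lambda_{k,-\eta}(P_n))\le F(-A_*,\ldots,-A_*)$). Proposition~\ref{pro:isod} then yields a uniform bound $mA_*^2/(4\pi\eta^2)+k$ on the number of connected components and on their diameter, which combined with Remark~\ref{pro:compoly}(iii) gives the stated $\min\{\ldots\}$. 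The opening remark of this section excludes vanishing.

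Next I would compactify. Since the functional is invariant under rigid motions, I would translate each connected component of $P_n$ so that the whole sequence lies in a fixed compact $D\subset\R^2$. Proposition~\ref{Prop:compactness} and Remark~\ref{pro:compoly}(i) then produce a subsequential $H^c$-limit $P\in\overline{\mathcal{P}_N}$. The non-vanishing bound together with the equiboundedness of the generalized perimeter allows one to show $|P_n|\to|P|>0$, so Theorem~\ref{prop:Mosco-domains} grants Mosco convergence $H^1(P_n)\to H^1(P)$.

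The heart of the proof is the upper semicontinuity of each eigenvalue,
$$
\limsup_{n\to+\infty}\overline\lambda_{j,-\eta}(P_n)\le\overline\lambda_{j,-\eta}(P),\qquad j=1,\ldots,k.
$$
My approach would be to take $V\subset H^1(P)$ a $j$-dimensional test space realizing the min-max \eqref{eq:genEigenvalues2} at $\overline\lambda_{j,-\eta}(P)$ (the span of the first $j$ generalized eigenfunctions), use the strong $\liminf$ part of Mosco convergence to produce approximating $\phi_i^n\in H^1(P_n)$ with $\phi_i^n\to\phi_i$ strongly in $H^1$, and let $V_n:=\mathrm{span}(\phi_1^n,\ldots,\phi_j^n)$. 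For each $n$, pick $v_n=\sum_i c_i^n\phi_i^n$ with $\sum_i(c_i^n)^2=1$ maximizing $\overline R_{P_n,-\eta}$ on $V_n$; after extraction $c_i^n\to c_i$ and $v_n\to v:=\sum_i c_i\phi_i\in V$ strongly in $H^1$. The numerator of $\overline R_{P_n,-\eta}(v_n)$ reads $\int|\nabla v_n|^2-\eta\int_{\partial P_n}[(v_n^+)^2+(v_n^-)^2]$; the gradient term passes to the limit by strong $H^1$-convergence, while the crucial observation is that the boundary term is upper semicontinuous after multiplication by $-\eta<0$, a direct consequence of Lemma~\ref{teo:lscboundary}. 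Combined with continuity of the denominator this gives $\limsup\overline R_{P_n,-\eta}(v_n)\le\overline R_{P,-\eta}(v)\le\overline\lambda_{j,-\eta}(P)$.

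To conclude I would combine the separate upper semicontinuity of $F$ in each variable with its monotonicity to pass to the limit: extracting a further subsequence so that each $\overline\lambda_{j,-\eta}(P_n)$ converges to some $\mu_j\le\overline\lambda_{j,-\eta}(P)$, upper semicontinuity gives $\limsup F(\ldots,\overline\lambda_{j,-\eta}(P_n),\ldots)\le F(\mu_1,\ldots,\mu_k)\le F(\overline\lambda_{1,-\eta}(P),\ldots,\overline\lambda_{k,-\eta}(P))$, so $P$ is a maximizer. The bound on the connected components of $P$ follows by re-applying Proposition~\ref{pro:isod} to $P$ itself, whose $k$-th eigenvalue still exceeds $-A_*$ by the same monotonicity argument that was used for $P_n$. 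The main obstacle is precisely the upper semicontinuity step: in contrast with the positive-$\beta$ case where the boundary term has the wrong sign for this purpose, here the negative boundary parameter makes the Rayleigh quotient behave in the right direction in the limit, and all the work lies in organizing the Mosco-strong approximation so that Lemma~\ref{teo:lscboundary} can be invoked with the sign that matters.
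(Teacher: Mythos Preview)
Your proposal is correct and follows essentially the same route as the paper: uniform eigenvalue bound via the regular $N$-agon competitor, Proposition~\ref{pro:isod} for diameter and component count, $H^c$-compactness, Mosco convergence, and upper semicontinuity of each $\overline\lambda_{j,-\eta}$ via strong approximation from the $s$-$\liminf$ side combined with Lemma~\ref{teo:lscboundary} (the negative sign of $-\eta$ turning lower semicontinuity of the boundary integral into upper semicontinuity of the Rayleigh numerator). The only cosmetic differences are that the paper works with an $\varepsilon$-almost optimal test space rather than the exact eigenspace and leaves the translation of connected components and the final monotonicity step implicit, whereas you spell these out.
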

\begin{proof}
Let $(P_n)\subset\overline{{\mathcal P}_N}$ be a maximizing sequence for $F(\overline{\lambda}_{1,-\eta}(\cdot),\ldots,\overline{\lambda}_{k,-\eta}(\cdot))$. In view of the hypotheses on $F$, we observe that any admissible polygon $E$ such that $\overline{\lambda}_{h,-\eta}(E)\le-A_*$ for every $h=1,\ldots,k$ cannot be optimal. Then, it is not restrictive to assume that $\overline{\lambda}_{h,-\eta}(P_n)>-A_*$ for every $h=1,\ldots,k$. By Proposition \ref{pro:isod} we have that $\text{diam}(P_n)<D$ for some $D$ independent of $n$. As a consequence, since we are in the polygonal framework, we also have $\sup_{n\in\N}\mathcal{H}^1(\partial P_n)<+\infty$. Thanks again to Proposition \ref{pro:isod}, we can write $P_n$ as union of $M_n$ equibounded connected components $P_n^1,\ldots,P_n^{M_n}$ as follows:
$$
P_n=P_n^1\cup\ldots\cup P_n^{M_n}, \quad M_n\le\min\left\{\left\lfloor\frac{N-1}{2}\right\rfloor,\frac{mA_*^2}{4\pi\eta^2}+k\right\}.
$$
The bound on $M_n$ is given both by Proposition \ref{pro:isod} and by the fact that we are dealing with polygons with at most $N$ sides, see Remark \ref{pro:compoly}(iii). These facts entail the existence of $P\in\overline{\mathcal{P}_N}$ such that $P_n\xrightarrow{H^c} P$ (up to subsequences).

Now, it remains to prove that $F(\overline{\lambda}_{1,-\eta}(\cdot),\ldots,\overline{\lambda}_{k,-\eta}(\cdot))$ is upper semicontinuous in $\overline{{\mathcal P}_N}$ with respect to the $H^c$-convergence. This has already been proved in \cite[Proposition 18]{bucurcito}; we report the main points of the proof for the convenience of the reader.

Let us show that, for any $h=1,\ldots,k$, we have
\begin{equation}
\label{eq:upsemicon}
\overline{\lambda}_{h,-\eta}(P)\ge\limsup_{n\to+\infty}\overline{\lambda}_{h,-\eta}(P_n);
\end{equation}
the upper semicontinuity of $F$ in each variable will give the thesis. Let us fix $\varepsilon>0$ and let $S$ be an admissible vector space in the min-max formula \eqref{eq:genEigenvalues} for $\overline{\lambda}_{h,-\eta}(P)$ such that
\begin{equation}\label{eq:lsc2d1}
\overline{\lambda}_{h,-\eta}(P)\ge\max_{u\in S\setminus\left\{0\right\}} \overline{R}_{P,-\eta}(u)
-\varepsilon.
\end{equation}
Let $\left\{u_j:j=1,\ldots,h\right\}$ be an $L^2(P)$-orthonormal basis for $S$. Then, for every $j=1,\ldots, h$, there exists $v_j^n\in H^1(P_n)$ such that, denoting by the same symbol the extension by zero of a function outside its domain, $v_j^n\to u_j$ strongly in $L^2(\R^2)$ and $\nabla v_j^n\to \nabla u_j$ strongly in $L^2(\R^2;\R^2)$. Now, since $\left\{u_1,\ldots,u_h\right\}$ is $L^2(P)$-orthonormal and $P_n\to P$ in $L^1(\R^2)$, we deduce that, for $n\in\N$ sufficiently large, $\left\{v_1^n,\ldots,v_h^n\right\}$ can be chosen linearly independent in $L^2(P_n)$. Let $S_n:=\text{span}\left\{v_1^n,\ldots,v_h^n\right\}$; it is an admissible subspace for the computation of $\overline{\lambda}_{h,-\eta}(P_n)$. Let
$$
v^n=\sum_{j=1}^h\alpha_j^nv_j^n\in S_n
$$
realize the maximum for the generalized Rayleigh quotient $\overline{R}$ on $S_n$:
$$
\max_{w\in S_n}\overline{R}(w)=\overline{R}(v^n).
$$
Without loss of generality, we can assume
$\sum_{j=1}^h(\alpha_j^n)^2=1$.
Then, up to subsequences, $\alpha_j^n\to\alpha_j$ in $\R$, with $\sum_{j=1}^h(\alpha_j)^2=1$.
Setting
$$
v=\sum_{j=1}^h\alpha_ju_j,
$$
we have that $v\in S\setminus\left\{0\right\}$, $v^n\to v$ strongly in $L^2(\R^2)$ and $\nabla v^n\to \nabla v$ in $L^2(\R^2;\R^2)$. Using \eqref{eq:lsc2d1}, the continuity of the volume integrals and the lower semicontinuity of the boundary integral (see Lemma \ref{teo:lscboundary}), we have
\begin{align*}
\limsup_{n\to+\infty}\overline{\lambda}_{h,-\eta}(P_n)&\le\limsup_{n\to+\infty}\sup_{w\in S_n}\overline{R}_{P_n,-\eta}(w) \le\limsup_{n\to+\infty}\overline{R}_{P,-\eta}(v^n)+\varepsilon\\
&\le\overline{R}_{P,-\eta}(v)+\varepsilon\le\max_{u\in S\setminus\left\{0\right\}}\overline{R}_{P,-\eta}(u)\le\overline{\lambda}_{h,-\eta}(P)+\varepsilon.
\end{align*}
Letting $\varepsilon\to 0^+$ we obtain \eqref{eq:upsemicon}; this concludes the proof.
\end{proof}

When we deal with the negative boundary parameter case, it has been seen in several situations that the perimeter constraint turns out to be rather natural; see, for instance,  \cite{bfnt19,cito2021quantitative} where the optimality and the stability of the ball for the first eigenvalue in the convex case is addressed. Even in our framework, the (generalized) perimeter constraint gives some additional properties of the optimal shapes. The following result gives a further property of the solutions of \eqref{eq:polynegper}, whenever all the eigenvalues involved in the functional are negative. This happens, for instance, if
$$
F(x_1,\ldots,x_k)=x_1
$$
and the $k$-th eigenvalue of an optimal polygon $P$ is negative. The condition $\overline{\lambda}_{k,-\eta}(P)<0$ is satisfied if $\eta$ is sufficiently large, more precisely if $\eta>\overline{\sigma}_k(P)$, were $\overline{\sigma}_k(P)$ is the $k$-th generalized Steklov eigenvalue of $P$, see e.g. \cite{BucFreKen}. Notice that $\overline{\sigma}_1(P)=0$ with eigenfunction given by the characteristic function of $P$, and so the strict negativity of the first Robin eigenvalue for all $\eta>0=\overline{\sigma}_1(P)$ is coherent with the previous consideration.

\begin{proposition}\label{Prop:furtherproperties}
For every polygon $P\in\overline{\mathcal{P}_{N}}$ with $\overline{\lambda}_{k,-\eta}(P)<0$, there exists a polygon $P^\bullet\in\overline{\mathcal{P}_{N}}$ which is a union of simple polygons, such that $\widetilde{Per}(P^\bullet)\le \widetilde{Per}(P)$ and  
\begin{equation}\label{eq:bullet}
    \overline{\lambda}_{h,-\eta}(P^\bullet)\ge\overline{\lambda}_{h,-\eta}(P).
\end{equation}
In particular, every solution $P_0$ of
Problem \eqref{eq:polynegper}
is union of simple polygons and thus $\overline{\lambda}_{1,-\eta}(P_0)={\lambda}_{1,-\eta}(P_0)$.
\end{proposition}
\begin{proof}
Let $P$ be as in the statement. Let us denote by $U$ the unbounded connected component of $\R^2\setminus\overline{P}$ and consider
$$
P^\bullet:=\R^2\setminus\overline{U}.
$$
The set $P^\bullet$ is obtained from $P$ by filling the holes and the fractures, see Figure \ref{fig:bullet}. This construction makes $P^\bullet$ a finite union of simple polygons; it is clear that $P^\bullet$ has at most $N$ sides and $\widetilde{Per}(P^\bullet) \leq \widetilde{Per}(P)$.
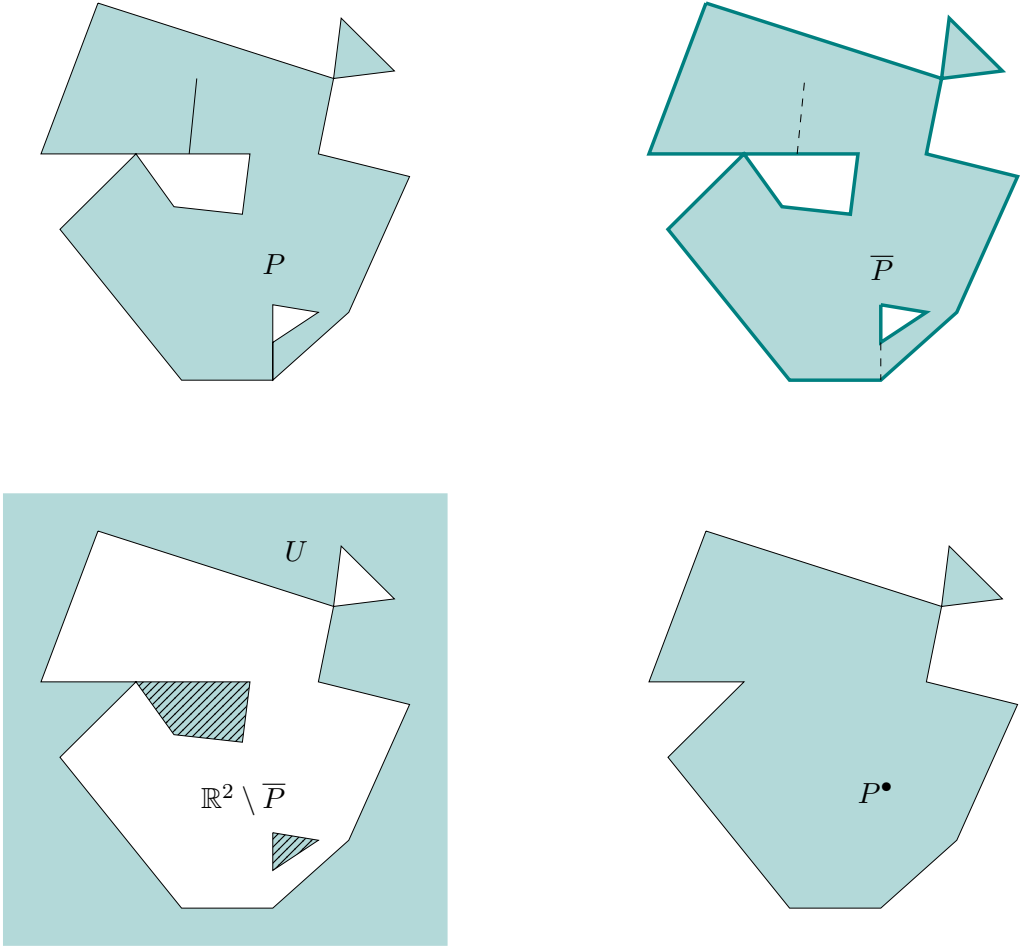
\begin{figure}[h!]
\begin{tikzpicture}[>=>>>]
\fill[teal!30] (-1,5) .. controls (-1,5) and (-1.75,3) ..(-1.75,3) .. controls (-1.75,3) and (1,3) .. (1,3) .. controls (1,3) and (0.9,2.2) .. (0.9,2.2) .. controls (0.9,2.2) and (0,2.3) .. (0,2.3) .. controls (0,2.3) and (-.5,3) .. (-.5,3) .. controls (-.5,3) and (-1.5,2) .. (-1.5,2) .. controls (-1.5,2) and (.1,0) .. (.1,0) .. controls (.1,0) and (1.3,0) .. (1.3,0) .. controls (1.3,0)
and (1.3,1) .. (1.3,1) .. controls (1.3,1) and (1.9,.9) .. (1.9,.9) .. controls (1.9,.9) and (1.3,.5) .. (1.3,.5) .. controls (1.3,.5) and (1.3,0) .. (1.3,0) .. controls (1.3,0) and (2.3,.9) .. (2.3,.9).. controls (2.3,.9) and (3.1,2.7) .. (3.1,2.7).. controls (3.1,2.7) and (1.9,3) .. (1.9,3).. controls (1.9,3) and (2.1,4) .. (2.1,4) .. controls (2.1,4) and (2.9,4.1) .. (2.9,4.1).. controls (2.9,4.1) and (2.2,4.8) .. (2.2,4.8).. controls (2.2,4.8) and (2.1,4) .. (2.1,4);
\draw[line width=.2pt](-1,5) .. controls (-1,5) and (-1.75,3) ..(-1.75,3) .. controls (-1.75,3) and (1,3) .. (1,3) .. controls (1,3) and (0.9,2.2) .. (0.9,2.2) .. controls (0.9,2.2) and (0,2.3) .. (0,2.3) .. controls (0,2.3) and (-.5,3) .. (-.5,3) .. controls (-.5,3) and (-1.5,2) .. (-1.5,2) .. controls (-1.5,2) and (.1,0) .. (.1,0) .. controls (.1,0) and (1.3,0) .. (1.3,0) .. controls (1.3,0) and (1.3,1) .. (1.3,1) .. controls (1.3,1) and (1.9,.9) .. (1.9,.9) .. controls (1.9,.9) and (1.3,.5) .. (1.3,.5) .. controls (1.3,.5) and (1.3,0) .. (1.3,0) .. controls (1.3,0) and (2.3,.9) .. (2.3,.9).. controls (2.3,.9) and (3.1,2.7) .. (3.1,2.7).. controls (3.1,2.7) and (1.9,3) .. (1.9,3).. controls (1.9,3) and (2.1,4) .. (2.1,4) .. controls (2.1,4) and (2.9,4.1) .. (2.9,4.1).. controls (2.9,4.1) and (2.2,4.8) .. (2.2,4.8).. controls (2.2,4.8) and (2.1,4) .. (2.1,4) .. controls (2.1,4) and (-1,5) .. (-1,5);
\draw[line width=.2pt] (.2,3) .. controls (.2,3) and (.3,4) .. (.3,4);
\draw (1.6,1.8) node[anchor=north east] {$P$};
\fill[teal!30] (+8-1,5) .. controls (+8-1,5) and (+8-1.75,3) ..(+8-1.75,3) .. controls (+8-1.75,3) and (1+8,3) .. (1+8,3) .. controls (1+8,3) and (0.9+8,2.2) .. (0.9+8,2.2) .. controls (0.9+8,2.2) and (0+8,2.3) .. (0+8,2.3) .. controls (0+8,2.3) and (-.5+8,3) .. (-.5+8,3) .. controls (-.5+8,3) and (-1.5+8,2) .. (-1.5+8,2) .. controls (-1.5+8,2) and (.1+8,0) .. (.1+8,0) .. controls (.1+8,0) and (1.3+8,0) .. (1.3+8,0) .. controls (1.3+8,0)
and (1.3+8,1) .. (1.3+8,1) .. controls (1.3+8,1) and (1.9+8,.9) .. (1.9+8,.9) .. controls (1.9+8,.9) and (1.3+8,.5) .. (1.3+8,.5) .. controls (1.3+8,.5) and (1.3+8,0) .. (1.3+8,0) .. controls (1.3+8,0) and (2.3+8,.9) .. (2.3+8,.9).. controls (2.3+8,.9) and (3.1+8,2.7) .. (3.1+8,2.7).. controls (3.1+8,2.7) and (1.9+8,3) .. (1.9+8,3).. controls (1.9+8,3) and (2.1+8,4) .. (2.1+8,4)  .. controls (2.1+8,4) and (2.9+8,4.1) .. (2.9+8,4.1).. controls (2.9+8,4.1) and (2.2+8,4.8) .. (2.2+8,4.8).. controls (2.2+8,4.8) and (2.1+8,4) .. (2.1+8,4);
\draw[line width=1.3pt, teal](-1+8,5) .. controls (-1+8,5) and (-1.75+8,3) ..(-1.75+8,3) .. controls (-1.75+8,3) and (1+8,3) .. (1+8,3) .. controls (1+8,3) and (0.9+8,2.2) .. (0.9+8,2.2) .. controls (0.9+8,2.2) and (0+8,2.3) .. (0+8,2.3) .. controls (0+8,2.3) and (-.5+8,3) .. (-.5+8,3) .. controls (-.5+8,3) and (-1.5+8,2) .. (-1.5+8,2) .. controls (-1.5+8,2) and (.1+8,0) .. (.1+8,0) .. controls (.1+8,0) and (1.3+8,0) .. (1.3+8,0) .. controls (1.3+8,0)  and (2.3+8,.9) .. (2.3+8,.9).. controls (2.3+8,.9) and (3.1+8,2.7) .. (3.1+8,2.7).. controls (3.1+8,2.7) and (1.9+8,3) .. (1.9+8,3).. controls (1.9+8,3) and (2.1+8,4) .. (2.1+8,4)  .. controls (2.1+8,4) and (2.9+8,4.1) .. (2.9+8,4.1).. controls (2.9+8,4.1) and (2.2+8,4.8) .. (2.2+8,4.8).. controls (2.2+8,4.8) and (2.1+8,4) .. (2.1+8,4) .. controls (2.1+8,4) and (-1+8,5) .. (-1+8,5);
\draw[line width=1.3pt, teal] (1.3+8,1) .. controls (1.3+8,1) and (1.9+8,.9) .. (1.9+8,.9) .. controls (1.9+8,.9) and (1.3+8,.5) .. (1.3+8,.5) .. controls (1.3+8,.5) and (1.3+8,1) ..(1.3+8,1);
\draw[line width=.2pt, dashed] (1.3+8,.5) .. controls (1.3+8,.5) and (1.3+8,0) .. (1.3+8,0);
\draw[line width=.2pt, dashed] (.2+8,3) .. controls (.2+8,3) and (.3+8,4) .. (.3+8,4);
\draw (1.6+8,1.8) node[anchor=north east] {$\overline{P}$};
\fill[teal!30] 
(-2.25,-1.5) .. controls (-2.25,-1.5) and (-1,5-7) ..
(-1,5-7) .. controls (-1,5-7) and (-1.75,3-7) ..(-1.75,3-7) .. controls (-1.75,3-7) and (1,3-7) .. (1,3-7) .. controls (1,3-7) and (0.9,2.2-7) .. (0.9,2.2-7) .. controls (0.9,2.2-7) and (0,2.3-7) .. (0,2.3-7) .. controls (0,2.3-7) and (-.5,3-7) .. (-.5,3-7) .. controls (-.5,3-7) and (-1.5,2-7) .. (-1.5,2-7) .. controls (-1.5,2-7) and (.1,0-7) .. (.1,0-7) .. controls (.1,0-7) and (1.3,0-7) .. (1.3,0-7) .. controls (1.3,0-7) and (1.3,1-7) .. (1.3,1-7) .. controls (1.3,1-7) and (1.9,.9-7) .. (1.9,.9-7) .. controls (1.9,.9-7) and (1.3,.5-7) .. (1.3,.5-7) .. controls (1.3,.5-7) and (1.3,0-7) .. (1.3,0-7) .. controls (1.3,0-7) and (2.3,.9-7) .. (2.3,.9-7).. controls (2.3,.9-7) and (3.1,2.7-7) .. (3.1,2.7-7).. controls (3.1,2.7-7) and (1.9,3-7) .. (1.9,3-7).. controls (1.9,3-7) and (2.1,4-7) .. (2.1,4-7) .. controls (2.1,4-7) and (2.9,4.1-7) .. (2.9,4.1-7).. controls (2.9,4.1-7) and (2.2,4.8-7) .. (2.2,4.8-7).. controls (2.2,4.8-7) and (2.1,4-7) .. (2.1,4-7) .. controls (2.1,4-7) and (-1,5-7) .. (-1,5-7)
.. controls (-1,5-7) and (-2.25,-1.5) .. (-2.25,-1.5) .. controls (-2.25,-1.5) and (3.6,-1.5) .. (3.6,-1.5) .. controls (3.6,-1.5) and (3.6,-7.5) .. (3.6,-7.5) .. controls (3.6,-7.5) and (-2.25,-7.5) .. (-2.25,-7.5) .. controls (-2.25,-7.5) and (-2.25,-1.5) .. (-2.25,-1.5);
\fill[pattern=north east lines]
(1.3,1-7) .. controls (1.3,1-7) and (1.9,.9-7) .. (1.9,.9-7) .. controls (1.9,.9-7) and (1.3,.5-7) .. (1.3,.5-7);
\fill[pattern=north east lines]
(1,3-7) .. controls (1,3-7) and (0.9,2.2-7) .. (0.9,2.2-7) .. controls (0.9,2.2-7) and (0,2.3-7) .. (0,2.3-7) .. controls (0,2.3-7) and (-.5,3-7) .. (-.5,3-7);
\draw[line width=0.2pt](-1,5-7) .. controls (-1,5-7) and (-1.75,3-7) ..(-1.75,3-7) .. controls (-1.75,3-7) and (1,3-7) .. (1,3-7) .. controls (1,3-7) and (0.9,2.2-7) .. (0.9,2.2-7) .. controls (0.9,2.2-7) and (0,2.3-7) .. (0,2.3-7) .. controls (0,2.3-7) and (-.5,3-7) .. (-.5,3-7) .. controls (-.5,3-7) and (-1.5,2-7) .. (-1.5,2-7) .. controls (-1.5,2-7) and (.1,0-7) .. (.1,0-7) .. controls (.1,0-7) and (1.3,0-7) .. (1.3,0-7) .. controls (1.3,0-7) and (2.3,.9-7) .. (2.3,.9-7).. controls (2.3,.9-7) and (3.1,2.7-7) .. (3.1,2.7-7).. controls (3.1,2.7-7) and (1.9,3-7) .. (1.9,3-7).. controls (1.9,3-7) and (2.1,4-7) .. (2.1,4-7) .. controls (2.1,4-7) and (2.9,4.1-7) .. (2.9,4.1-7).. controls (2.9,4.1-7) and (2.2,4.8-7) .. (2.2,4.8-7).. controls (2.2,4.8-7) and (2.1,4-7) .. (2.1,4-7) .. controls (2.1,4-7) and (-1,5-7) .. (-1,5-7);
\draw[line width=0.2pt] (1.3,1-7) .. controls (1.3,1-7) and (1.9,.9-7) .. (1.9,.9-7) .. controls (1.9,.9-7) and (1.3,.5-7) .. (1.3,.5-7) .. controls (1.3,.5-7) and (1.3,1-7) .. (1.3,1-7);
\draw (1.6,1.8-7) node[anchor=north east] {$\R^2\setminus\overline{P}$};
\draw (1.9,-2) node[anchor=north east]{$U$};
\fill[teal!30] (+8-1,5-7) .. controls (+8-1,5-7) and (+8-1.75,3-7) ..(+8-1.75,3-7) .. controls (+8-1.75,3-7) and (-.5+8,3-7) .. (-.5+8,3-7) .. controls (-.5+8,3-7) and (-1.5+8,2-7) .. (-1.5+8,2-7) .. controls (-1.5+8,2-7) and (.1+8,0-7) .. (.1+8,0-7) .. controls (.1+8,0-7) and (1.3+8,0-7) .. (1.3+8,0-7) .. controls (1.3+8,0-7) and (2.3+8,.9-7) .. (2.3+8,.9-7).. controls (2.3+8,.9-7) and (3.1+8,2.7-7) .. (3.1+8,2.7-7).. controls (3.1+8,2.7-7) and (1.9+8,3-7) .. (1.9+8,3-7).. controls (1.9+8,3-7) and (2.1+8,4-7) .. (2.1+8,4-7) .. controls (2.1+8,4-7) and (2.9+8,4.1-7) .. (2.9+8,4.1-7).. controls (2.9+8,4.1-7) and (2.2+8,4.8-7) .. (2.2+8,4.8-7).. controls (2.2+8,4.8-7) and (2.1+8,4-7) .. (2.1+8,4-7);
\draw[line width=.2pt](-1+8,5-7) .. controls (-1+8,5-7) and (-1.75+8,3-7) ..(-1.75+8,3-7) .. controls (-1.75+8,3-7) and (-.5+8,3-7) .. (-.5+8,3-7) .. controls (-.5+8,3-7) and (-1.5+8,2-7) .. (-1.5+8,2-7) .. controls (-1.5+8,2-7) and (.1+8,0-7) .. (.1+8,0-7) .. controls (.1+8,0-7) and (1.3+8,0-7) .. (1.3+8,0-7) .. controls (1.3+8,0-7)  and (2.3+8,.9-7) .. (2.3+8,.9-7).. controls (2.3+8,.9-7) and (3.1+8,2.7-7) .. (3.1+8,2.7-7).. controls (3.1+8,2.7-7) and (1.9+8,3-7) .. (1.9+8,3-7).. controls (1.9+8,3-7) and (2.1+8,4-7) .. (2.1+8,4-7) .. controls (2.1+8,4-7) and (2.9+8,4.1-7) .. (2.9+8,4.1-7).. controls (2.9+8,4.1-7) and (2.2+8,4.8-7) .. (2.2+8,4.8-7).. controls (2.2+8,4.8-7) and (2.1+8,4-7) .. (2.1+8,4-7) .. controls (2.1+8,4-7) and (-1+8,5-7) .. (-1+8,5-7);
\draw (1.6+8,1.8-7) node[anchor=north east] {$P^\bullet$};
\end{tikzpicture}
\label{fig:bullet}
\caption{Construction of $P^\bullet$}
\end{figure}
If $\overline{\lambda}_{h,-\eta}(P^\bullet)\ge 0$, then \eqref{eq:bullet} is immediate since $\overline{\lambda}_{h,-\eta}(P)<0$. Let us assume now that $\overline{\lambda}_{h,-\eta}(P^\bullet)<0$. Now, we fix $\varepsilon<|\overline{\lambda}_{h,-\eta}(P^\bullet)|$ and consider an $h$-dimensional subspace $S=\text{span}\left\{u_1,\ldots,u_h\right\}$ of $H^1(P^\bullet)$ such that
$$
\overline{\lambda}_{h,-\eta}(P^\bullet)+\varepsilon\ge\max_{\alpha_1,\ldots,\alpha_h\in\R}\overline{R}_{P^\bullet,-\eta}.
$$
We claim that the space generated by the restrictions of $u_1,\ldots,u_h$ to $P$, still denoted by $S$, is also an $h$-dimensional subspace of $H^1(P)$, so it is admissible to compute $\overline{\lambda}_{h,-\eta}(P)$. Indeed, let us assume, by contradiction, that the restrictions of $u_1,\ldots,u_h$ to $P$ are linearly dependent, so there exist $\tilde{\alpha}_1,\ldots,\tilde{\alpha}_h\in\R$ an $h$-tuple of coefficients such that
$$\sum_{j=1}^h\tilde{\alpha}_ju_j=0\quad\text{on $P$}.$$
It follows that both traces on $\partial P$ are zero, as well. Let us denote now by
$$\tilde{v}:=\sum_{j=1}^h\tilde{\alpha}_ju_j\in S.$$
Then, in view of the linear independence in $P^\bullet$, $\tilde{v}\not\equiv 0$ on $P^\bullet\setminus P$ and it holds $\tilde{v}=0$ on $\partial P^\bullet$ (in the sense of the traces). As a consequence we get
$$0<\frac{\int_{P^\bullet\setminus P}|\nabla\tilde{v}|^2\:dx}{\int_{P^\bullet\setminus P}\tilde{v}^2\:dx}=\overline{R}_{P^\bullet,-\eta}(\tilde{v})\le\max_{S}\overline{R}_{P^\bullet,-\eta}<\overline{\lambda}_{h,-\eta}(P^\bullet)+\varepsilon<0,$$
giving a contradiction.

Let us denote by  $\overline{\alpha_1},\ldots,\overline{\alpha_h}\in\R$ an $h$-tuple of coefficients realizing the maximum of the Rayleigh quotient relative to $P$ in $\text{span}\{u_1|_P,\ldots,u_h|_P\}$.
Notice that, since $P^\bullet\supset P$ and $\partial P\supset\partial P^\bullet$, for any $\varphi\in H^1(P^\bullet)$ it holds
$$
\int_{P^\bullet}|\nabla\varphi|^2\:dx\ge\int_{P}|\nabla\varphi|^2\:dx,\ \int_{P^\bullet}\varphi^2\:dx\ge\int_{P}\varphi^2\:dx,
$$
$$
\int_{\partial P^\bullet}\left[(\varphi^+)^2+(\varphi^-)^2\right]\:d\mathcal{H}^{1}\le\int_{\partial P}\left[(\varphi^+)^2+(\varphi^-)^2\right]\:d\mathcal{H}^{1}.$$
Moreover, since the involved Rayleigh quotients are negative, they are monotonically increasing with respect to the volume integrals and monotonically decreasing with respect to the boundary integral. Taking all these considerations into account we get
\allowdisplaybreaks
\begin{align*}
\overline{\lambda}_{h,-\eta}(P^\bullet)+\varepsilon&\ge\max_{\alpha_1,\ldots,\alpha_h\in\R}\frac{\displaystyle\int_{P^\bullet}\Big|\sum_i\alpha_i\nabla u_i\Big|^2\:dx-\eta\int_{\partial P^\bullet}\left[\Big(\sum_i\alpha_i u_i^-\Big)^2+\Big(\sum_i\alpha_i u_i^+\Big)^2\right]\:d\mathcal{H}^1}{\displaystyle\int_{P^\bullet}\Big(\sum_i\alpha_i u_i\Big)^2\:dx}\\
&\ge\frac{\displaystyle\int_{P^\bullet}\Big|\sum_i\overline{\alpha_i}\nabla u_i\Big|^2\:dx-\eta\int_{\partial P^\bullet}\left[\Big(\sum_i\overline{\alpha_i} u_i^-\Big)^2+\Big(\sum_i\overline{\alpha_i} u_i^+\Big)^2\right]\:d\mathcal{H}^1}{\displaystyle\int_{P^\bullet}\Big(\sum_i\overline{\alpha_i} u_i\Big)^2\:dx}\\
&\ge\frac{\displaystyle\int_{P}\Big|\sum_i\overline{\alpha_i}\nabla u_i\Big|^2\:dx-\eta\int_{\partial P}\left[\Big(\sum_i\overline{\alpha_i} u_i^-\Big)^2+\Big(\sum_i\overline{\alpha_i} u_i^+\Big)^2\right]\:d\mathcal{H}^1}{\displaystyle\int_{P}\Big(\sum_i\overline{\alpha_i} u_i\Big)^2\:dx}\\
&=\max_{\alpha_1,\ldots,\alpha_h\in\R}\frac{\displaystyle\int_{P}\Big|\sum_i\alpha_i\nabla u_i\Big|^2\:dx-\eta\int_{\partial P}\left[\Big(\sum_i\alpha_i u_i^-\Big)^2+\Big(\sum_i\alpha_i u_i^+\Big)^2\right]\:d\mathcal{H}^1}{\displaystyle\int_{P}\Big(\sum_i\alpha_i u_i\Big)^2\:dx}\\
&\ge\overline{\lambda}_{h,-\eta}(P).
\end{align*}
In view of the arbitrariness of $\varepsilon>0$ we get \eqref{eq:bullet} as required.
\end{proof}

\section{Final remarks}\label{sec:Open}
It is interesting to see how the qualitative properties proved in Sections \ref{sec:positive} and \ref{sec:negative} are somehow dual. Indeed, for $\beta>0$, we are able to count the sides of an optimal generalized polygon, but not to show that it is actually a polygon. Unfortunately, removing the possible fractures seems hard in this framework, as we do not have any monotonicity with respect to inclusion, as in the case of Dirichlet boundary conditions. Even with the perimeter constraint we cannot infer anything about the convexity of the minimal polygons, differently to the cases in which a monotonicity holds; see \cite{bucur2004variational} for several examples or \cite{CCL} for a recent application to a fourth order problem.
On the other hand, the case $\beta<0$ allows to remove fractures or holes but not to count the size of the maximal polygon.

\bibliographystyle{abbrv}
\bibliography{bibliography}
\end{document}